\documentclass[11pt, reqno]{amsart}
\usepackage{amsmath, amssymb, mathrsfs, amsthm}
\usepackage{bm}
\usepackage{booktabs}
\usepackage{float}
\usepackage{subfig}
\usepackage[colorlinks, linkcolor = blue, anchorcolor = blue, citecolor = blue, CJKbookmarks = True]{hyperref}
\usepackage{geometry}
\usepackage{graphicx}
\usepackage{caption}

\graphicspath{{images/}}
\usepackage{adjustbox}
\usepackage{tabularx}
\usepackage{array}
\usepackage{makecell}
\usepackage{tikz}
\usetikzlibrary{arrows.meta, positioning, shapes.geometric, shadows}
\usetikzlibrary{positioning, arrows.meta, calc}

\usepackage{color, xcolor}
\newcommand{\red}[1]{{\color{red}{#1}}}

\newtheorem{assumption}{Assumption}
\newtheorem{definition}{Definition}[section]
\newtheorem{lemma}[definition]{Lemma}
\newtheorem{theorem}[definition]{Theorem}
\newtheorem{proposition}[definition]{Proposition}
\newtheorem{corollary}[definition]{Corollary}
\newtheorem{remark}[definition]{Remark}

\numberwithin{equation}{section}
\allowdisplaybreaks[4]

\newcommand{\rd}{\mathrm d}

\newcommand{\hE}{\mathbb E}

\newcommand{\hR}{\mathbb R}

\newcommand{\cC}{\mathcal C}

\newcommand{\cN}{\mathcal N}
\newcommand{\cO}{\mathcal O}
\newcommand{\jdc}[1]{{\color{blue}{{#1}}}}

\begin{document}

\title[Splitting AVF method for GLEs]{Splitting AVF method for generalized Langevin equations:\ probability density function and geometric ergodicity}

\author{Xinjie Dai}
\address{School of Mathematics and Statistics, Yunnan University, Kunming 650500, Yunnan, China}
\email{dxj@ynu.edu.cn}

\author{Xingyu Liu}
\address{School of Mathematics and Statistics, Yunnan University, Kunming 650500, Yunnan, China}
\email{lxy@stu.ynu.edu.cn}

\author{Diancong Jin}
\address{School of Mathematics and Statistics, Huazhong University of Science and Technology, Wuhan 430074, China; 
Hubei Key Laboratory of Engineering Modeling and Scientific Computing, Huazhong University of Science and Technology, Wuhan 430074, China}
\email{jindc@hust.edu.cn (Corresponding author)}

\author{Liying Sun}
\address{Academy for Multidisciplinary Studies, Capital Normal University, Beijing 100048, China}
\email{liyingsun@lsec.cc.ac.cn}

\thanks{This work is supported by National Natural Science Foundation of China (Nos.\ 12401547, 12201228, 12471391), Yunnan Fundamental Research Project (No.\ 202501AU070074), and Scientific Research and Innovation Project of Postgraduate Students in the Academic Degree of Yunnan University (No.\ KC-252511570)}

\begin{abstract}
The generalized Langevin equation (GLE) constitutes a fundamental model for describing nonequilibrium dynamics with memory effects. To overcome the numerical challenges arising from superquadratically growing potentials and degenerate noise, we propose and analyze a structure-preserving splitting averaged vector field (AVF) method for a quasi-Markovian GLE. The core advantage of this method lies in its ability to simultaneously preserve the exponential integrability, Malliavin differentiability, and ergodicity of the underlying continuous system. Notably, by leveraging exponential integrability, Malliavin differentiability, and uniform non-degeneracy of the numerical solution, we obtain the existence and smoothness of its probability density function, which converges to that of the exact solution with first-order accuracy. Furthermore, by validating the Lyapunov condition and the minorization condition using a localized technique, we establish the geometric ergodicity of the numerical solution. Finally, numerical experiments are conducted to confirm the theoretical results.
\end{abstract}

\keywords{generalized Langevin equation, splitting AVF method, probability density function, geometric ergodicity, error analysis}

\maketitle

\textit{MSC 2020 subject classifications}: 65C30, 60H35, 60H07, 37M25


\section{Introduction}
\label{sec.Introduction}

Generalized Langevin equations (GLEs) have emerged as a cornerstone in statistical physics, molecular dynamics, and machine learning, owing to their versatility in encompassing memory effects and stochastic forcing within nonequilibrium dynamical systems \cite{GlattNathanHerzogDavidMcKinley2020, LeimkuhlerSachs2019, LimSoonWehrJanLewensteinMaciej2020, Pavliotis2014}. According to Newton's second law, the GLE describing the motion of a particle with position $x(t)$ and momentum $v(t)$ typically takes the form 
\begin{align} \label{eqn:GLE} 
\begin{cases}
\rd v(t) = - \gamma v(t) \rd t - \nabla U(x(t)) \rd t + \sqrt{2\gamma} \rd W_{0}(t) - \int_0^t K(t - s)v(s) \rd s \rd t + F (t) \rd t, \\
\rd x(t) = v(t) \rd t. 
\end{cases}
\end{align}
Here, $t > 0$, $\gamma>0$ denotes the friction coefficient, $W_{0}$ represents a standard Wiener process, and the confining potential $U$ is assumed to exhibit superquadratic growth. The memory effects are captured by a convolution force with kernel $K$, while the stochastic forcing $F$ is a zero-mean stationary Gaussian process; these two terms are intrinsically linked via the fluctuation-dissipation relation 
\begin{align*} 
\hE [F(t)F(s)] = K(|t - s|), \qquad \forall\, t, s \geq 0. 
\end{align*}

In general, the dynamics described by \eqref{eqn:GLE} are non-Markovian due to the presence of memory kernels. Nevertheless, it is well known that for kernels expressed as sums of exponential functions, the Mori--Zwanzig formalism provides a Markovian lifting of \eqref{eqn:GLE} \cite{DuongNguyen2024, DuongShang2022, LeimkuhlerSachs2019}.
Specifically, suppose that the memory kernel is given by 
\begin{align*}
K(t) = \sum_{\ell = 1}^{k} \lambda_{\ell}^2 e^{-\alpha_{\ell} t}, \qquad t \geq 0 
\end{align*}
with positive constants $\lambda_{\ell}$ and $\alpha_{\ell}$ for $\ell = 1, \dots, k$. By invoking the fluctuation-dissipation relation together with Duhamel's principle, \eqref{eqn:GLE} can be reformulated as a higher-dimensional quasi-Markovian system with auxiliary state $z(t) = (z_{1}(t), \cdots, z_{k}(t))^{\top}$ and initial value $z(0) \sim \cN(0, \mathrm{Id}_{k \times k})$, given by 
\begin{align} \label{GLE}
\begin{cases}
\rd v(t) = - \gamma v(t) \rd t - \nabla U(x(t)) \rd t + \sum\limits_{\ell = 1}^{k} \lambda_{\ell} z_{\ell}(t) \rd t + \sqrt{2 \gamma} \rd W_{0}(t), \\ 
\rd z_{\ell}(t) = - \alpha_{\ell} z_{\ell}(t) \rd t - \lambda_{\ell} v(t) \rd t + \sqrt{2 \alpha_{\ell}} \rd W_{\ell}(t), \qquad \ell = 1, \cdots, k, \\ 
\rd x(t) = v(t) \rd t. 
\end{cases}
\end{align}
Here, $W = (W_0, W_1, \cdots, W_k)^\top$ is a $(k+1)$-dimensional Brownian motion defined on a filtered complete probability space $(\Omega, \mathcal{F}, (\mathcal{F}_t)_{t \ge 0}, \mathbb{P})$ satisfying the usual conditions. For the GLE \eqref{GLE}, H\"ormander's condition guarantees that the exact solution admits a smooth probability density function \cite[Chapter 2]{NualartDavid2006}. On the other hand, the GLE \eqref{GLE} is geometrically ergodic with a unique invariant measure $\pi$, characterized by the Gibbs--Boltzmann density function (see \cite{DuongNguyen2024}) 
\begin{align} \label{Gibbs--Boltzmann}
\pi(\rd v, \rd z_1, \cdots, \rd z_k, \rd x) = \frac{1}{Z_k} \exp \big( - H_{0} (v, z_1, \cdots, z_k, x) \big) \rd v \rd z_1 \cdots \rd z_k \rd x, 
\end{align} 
where $Z_k$ is the normalizing constant, and $H_{0}$ is the Hamiltonian defined by 
\begin{align} \label{Hamiltonian0}
H_{0}(v, z, x) = \frac{1}{2}|v|^{2} + \frac{1}{2} \|z\|^{2} + U(x), \qquad v \in \hR,~~z \in \hR^{k},~~x \in \hR.
\end{align}

In this paper, we study discrete-time methods for the GLE \eqref{GLE} that preserve intrinsic properties, focusing on the numerical approximation of probability density functions and invariant measure $\pi$. For clarity, the analysis is carried out for the $\hR^{k+2}$-valued solution $Y = (v, z^{\top}, x)^{\top}$, with the initial value assumed to be deterministic. The results are extendable to $Y \in \hR^{(k+2)\times d}$, where $d \in \mathbb{N}_+$.

\subsection{Probability density function}

The solution to \eqref{GLE} constitutes a family of random variables on the underlying probability space $(\Omega, \mathcal{F}, \mathbb{P})$, whose probability density functions play a key role in the formulation and analysis of the associated Fokker--Planck equation \cite{Pavliotis2014}. Since direct discretizations of the Fokker--Planck equation are often hindered by the curse of dimensionality, it is more prevalent to approximate the probability density function of the underlying stochastic system via its temporal discretization. Moreover, investigating the probability density function of numerical solutions to the GLE, along with its convergence property, not only deepens the understanding of the distributional behavior of stochastic systems but also provides a solid theoretical foundation for applications such as Monte Carlo sampling \cite{PangWangWu2025, WuBrooksVandenEijnden2016}.

Malliavin calculus is a powerful tool for investigating the properties of probability density functions of Wiener functionals, including solutions to stochastic differential and integral equations \cite{HongJinSheng2024, LiWangWang2026, NualartDavid2006}. Cui et al.\ \cite{CuiHongSheng2022} proved the first-order density convergence of the numerical solution for the underdamped Langevin equation using Malliavin calculus. This is the first result to address the density approximation for stochastic differential equations with non-globally Lipschitz coefficients. Dai and Jin \cite{daijin2024} applied Malliavin calculus to establish the convergence rate of the probability density function of the Euler--Maruyama (EM) method for the overdamped GLE under global Lipschitz conditions. Nevertheless, for the GLE with superquadratic growth and degenerate noise, there are currently no available results on the probability density function of numerical solutions, particularly concerning the consistency between the convergence rates of density approximations and those of the corresponding numerical methods.

\subsection{Geometric ergodicity}

Ergodicity characterizes the long-time statistical behavior of stochastic dynamical systems, establishing the unity between time averages and ensemble averages \cite{HongSun2022, HongWang2019}. In the context of Langevin-type equations, geometric ergodicity implies that the distribution of the solution converges exponentially fast to a unique Gibbs--Boltzmann measure as time tends to infinity \cite{DuongNguyen2024, HerzogDavidMattinglyJonathan2019, LeimkuhlerSachs2019, LuYulongMattinglyJonathan2020}. This property provides a theoretical foundation for both the statistical-mechanical interpretation and practical applicability of such models. Accordingly, the design of numerical methods that rigorously preserve ergodicity of the underlying continuous system has become a key step in bridging theoretical analysis with reliable simulations.

To accommodate the diverse dynamical characteristics of stochastic systems, a wide range of numerical methods has been developed, including modified explicit methods, implicit methods, and splitting methods \cite{BaoJianhaiShaoJinghaiYuanChenggui2016, HuangLi2025, LiMaoYin2019, LiuMaoWu2023, LiuLiu2025, PangWangWu2025}. When the memory term in the GLE vanishes, the system reduces to the classical underdamped Langevin equation, for which ergodicity-preserving numerical methods have been extensively studied \cite{AbdulleVilmartZygalakis2015, HongSunWang2017, LeimkuhlerMatthewsStoltz2016, MattinglyStuartHigham2002, MilsteinTretyakov2007, Talay2002}. Recently, a new class of splitting methods proposed by Chen et al.\ \cite{ChenDangHongZhang2025} is capable of preserving both the ergodicity and the exponential integrability of the underlying continuous system. Dai et al.\ \cite{daiJiangWang2025} proposed an explicit splitting SAV method for approximating the invariant measure of the classical underdamped Langevin equation.

Relatively, research on numerical methods for approximating the invariant measure of the GLE \eqref{GLE} remains limited, and we are only aware of \cite{DuongShang2022, LeimkuhlerSachs2022}. Duong and Shang \cite{DuongShang2022} investigated the quasi-Markovian GLE subject to a harmonic oscillator potential and developed a novel splitting scheme named the BAEOEAB method which achieves second-order convergence for approximating the invariant measure. Leimkuhler and Sachs \cite{LeimkuhlerSachs2022} proposed and analyzed a class of symmetric splitting schemes for the quasi-Markovian GLE with a globally Lipschitz continuous $\nabla U$, establishing weak second-order convergence, geometric ergodicity, and central limit theorems.

\subsection{Main difficulty, idea, and contribution} 

Due to the presence of superquadratic potentials and degenerate noise, we encounter several challenges in numerically approximating the probability density functions and the invariant measure $\pi$ of the GLE \eqref{GLE}. Specifically,
\begin{itemize}
\item[(i)] \emph{Probability density function.} 
First, for the GLE \eqref{GLE} with superquadratically growing potentials, the drift coefficient exhibits superlinear growth and fails to satisfy the globally monotone condition. This may result in numerical solution blow-up and long-time sampling distortion, thereby hindering the approximation of probability density functions. Second, the smoothness of the probability density function of the numerical solution depends on its uniform non-degeneracy in the sense of Malliavin calculus, which is difficult to verify under degenerate noise. Third, the convergence analysis of numerical probability density functions relies on the strong or weak convergence rate of the numerical solution, which is also complicated by the superquadratically growing potentials.

\item[(ii)] \emph{Geometric ergodicity.} 
The minorization and Lyapunov conditions are fundamental tools for establishing the geometric ergodicity of numerical approximations for stochastic differential equations. However, in the presence of degenerate noise, direct verification of the minorization condition is often highly non-trivial. A widely adopted sufficient criterion (see \cite[Assumption 2.1]{MattinglyStuartHigham2002}) simplifies this task by requiring the transition kernel to admit a jointly continuous and locally positive density. While this criterion works well for the classical underdamped Langevin equation, where the numerical mapping over the first two steps is bijective with respect to the Brownian increments \cite{ChenDangHongZhang2025,MattinglyStuartHigham2002}, it faces significant challenges for the GLE \eqref{GLE}. To be specific, unlike the classical underdamped Langevin equation, the GLE \eqref{GLE} involves $k$ additional Brownian motions, which disrupts the bijectivity between the numerical solution and the Brownian increments. This lack of bijectivity precludes a direct proof of the joint continuity of the transition kernel density, and thus complicates the verification of the minorization condition. 
\end{itemize}

The main idea for this study is as follows. For the GLE \eqref{GLE} with superquadratic potentials and degenerate noise, we first construct a novel splitting averaged vector field (AVF) method and establish its exponential integrability, which serves as a foundation for the analysis of strong convergence rate under non-globally monotone condition. Next, by investigating the Malliavin differentiability and uniform non-degeneracy of the numerical solution, we show the existence and uniqueness of the numerical probability density function. Building upon these results, we derive the convergence rate of the density. To overcome the difficulties in proving geometric ergodicity, we establish the minorization condition through a localized technique combined with the compactness argument. Further, by verifying the Lyapunov condition, we prove the geometric ergodicity of the numerical solution. A schematic overview of the framework is presented in Figure \ref{fig:avf_theory_framework}.

\begin{figure}[htbp]
\centering
\begin{tikzpicture}[
 node distance=1cm and 1cm,
 >=Stealth, 
 base/.style={
 rectangle, draw, thick, rounded corners=5pt,
 align=center, font=\small\sffamily,
 minimum width=2.8cm, minimum height=1cm,
 drop shadow={opacity=0, shadow xshift=0pt, shadow yshift=0pt} 
 },
 core/.style={base, fill=blue!20, draw=blue!70!black},
 cond/.style={base, fill=blue!5, draw=blue!50!black},
 result/.style={base, fill=orange!20, draw=orange!70!black},
 arrow/.style={thick, draw=gray!75, ->}, 
 line/.style={thick, draw=gray!75} 
]

\node (root) [core] {Splitting AVF method};

\node (expint) [cond, below=1.0cm of root] {Exponential\\integrability};
\node (lyap) [cond, right=1.0cm of expint] {Lyapunov\\condition};
\node (minor) [cond, right=1.0cm of lyap] {Minorization\\condition};

\node (uniform) [cond, below=1.0cm of expint] {Uniform\\non-degeneracy};
\node (malliavin)[cond, left=1.0cm of uniform] {Malliavin\\differentiability};

\node (existden) [result, below=1.0cm of malliavin] {Existence \& smoothness\\of density};
\node (convden) [result, below=1.0cm of uniform] {Convergence rate\\of density};
\node (strong) [result, right=1.0cm of convden] {Strong\\ convergence rate};
\node (geoerg) [result, below=3.0cm of minor] {Geometric\\ergodicity};

\draw [line] (root) -- (expint);
\draw [line] (root) -- (lyap);
\draw [line] (root) -- (minor);

\draw [arrow] (expint) -> (malliavin);
\draw [arrow] (expint) -> (uniform);
\draw [arrow] (expint) -> (strong);

\draw [arrow] (malliavin) -> (existden);
\draw [arrow] (uniform) -> (existden);
\draw [arrow] (uniform) -> (convden);
\draw [arrow] (strong) -> (convden);

\draw [arrow] (lyap) -> (geoerg);
\draw [arrow] (minor) -> (geoerg);
\end{tikzpicture}
\caption{The theoretical framework of main results.}
\label{fig:avf_theory_framework}
\end{figure}
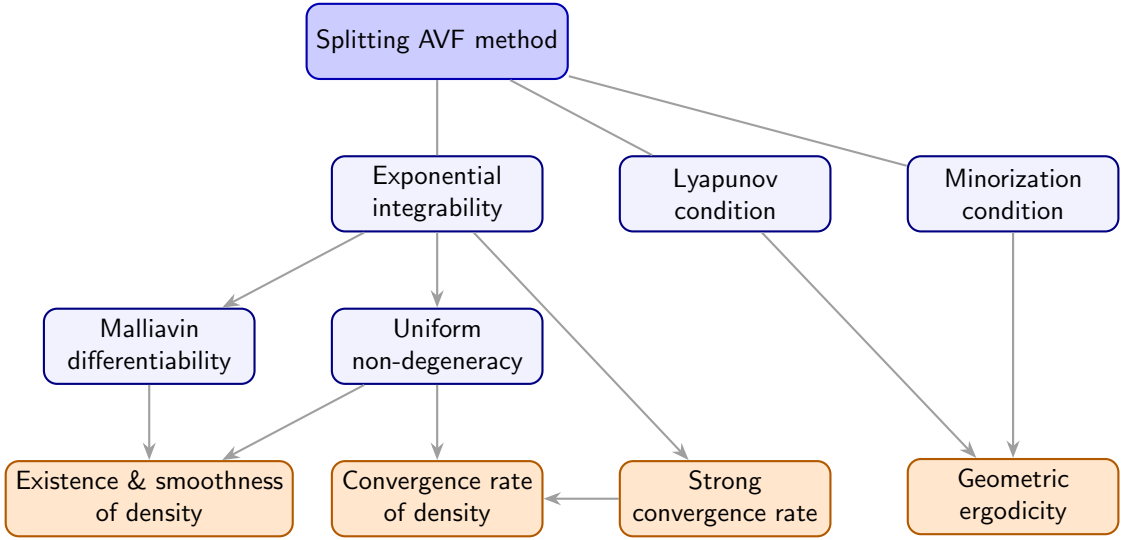

For the numerical study of the GLE \eqref{GLE} with superquadratic potentials and degenerate noise, the main contributions of this paper are summarized below:\ 
\begin{itemize}
\item[(i)] We present a novel splitting AVF method for the GLE \eqref{GLE}, which is proven to achieve first-order strong convergence, building upon its exponential integrability.
 
\item[(ii)] Based on Malliavin differentiability and uniform non-degeneracy, we show that the numerical solution generated by the splitting AVF method admits a smooth probability density function that converges to that of the exact solution with first-order accuracy. 

\item[(iii)] We establish the geometric ergodicity of the numerical solution generated by the splitting AVF method, based on the Lyapunov and minorization conditions.
\end{itemize}

The rest of the paper is organized as follows. Section \ref{sec.Preliminaries} reviews the fundamental properties of the GLE \eqref{GLE} and develops a splitting AVF method, along with an analysis of its solvability. Section \ref{sec.StrongConvergence} first shows that the splitting AVF method preserves the exponential integrability of the GLE \eqref{GLE} and then establishes its first-order strong convergence. Section \ref{sec.PDF} presents the existence, smoothness, and convergence of the probability density function associated with the numerical solution generated by the splitting AVF method, based on its Malliavin differentiability and uniform non-degeneracy. In Section \ref{sec.Ergodicity}, the splitting AVF method is shown to preserve the ergodicity of the GLE \eqref{GLE}. Finally, Section \ref{sec.experiment} reports numerical experiments that validate and illustrate the theoretical findings.

\section{The splitting AVF method for GLEs} 
\label{sec.Preliminaries}

In this section, we first review the fundamental properties of the GLE \eqref{GLE}, and propose the splitting AVF method \eqref{numsolu}--\eqref{sdesolu} tailored to this system.

\textbf{Notation}. 
The sets of nonnegative and positive integers are denoted by $\mathbb{N}$ and $\mathbb{N}_{+}$, respectively. For two real numbers $a$ and $b$, we denote $a \wedge b:= \min \{a, b\}$ and $a \vee b:= \max \{a, b\}$. Let $C$ be a generic constant, independent of step size $h$, that may differ from line to line. Let $T \in (0, \infty)$ and $\mathbb{H} := L^2([0, T], \mathbb{R}^{k+1})$ be the Hilbert space equipped with the inner product $\langle g, h \rangle_{\mathbb{H}} = \int_0^T \langle g(t), h(t) \rangle_{\mathbb{R}^{k+1}} \, \mathrm{d}t$. The symbol $D$ denotes the standard derivative, while $\mathcal{D}$ denotes the Malliavin derivative operator on the Wiener space $\Omega = C_0([0, T], \mathbb{R}^{k+1})$. We next recall several basic tools from Malliavin calculus. Additional results and details can be found in \cite{CuiHongSheng2026,NualartDavid2006}. Within the framework of Malliavin calculus, $\mathcal{S}$ denotes the class of smooth cylindrical random variables of the form $F=f(B(h_1), \ldots, B(h_n))$ for $f \in C_p^{\infty}(\mathbb{R}^n)$ and $h_i \in \mathbb{H}$, with $B(h)$ denoting the Wiener integral for $h \in \mathbb{H}$. The Malliavin--Sobolev space $\mathbb{D}^{\alpha, p}$ (for $\alpha \geq 1$ and $p \geq 1$) is defined as the closures of $\mathcal{S}$ with respect to the norm 
\begin{align*}
\|F\|_{\alpha, p} = \bigg( \mathbb{E}\Big[ |F|^p + \sum_{j = 1}^{\alpha} \|\mathcal{D}^jF\|_{\mathbb{H}^{\otimes j}}^p \Big] \bigg)^{\frac{1}{p}},
\end{align*}
where $\mathcal{D}^j F$ denotes the $j$th Malliavin derivative of $F$, defined recursively. Additionally, we consider the topological projective limits $L^{\infty-}(\Omega) = \bigcap_{p \geq 1} L^p(\Omega)$, $\mathbb{D}^{\alpha, \infty} = \bigcap_{p \geq 1} \mathbb{D}^{\alpha, p}$, and $\mathbb{D}^{\infty} = \bigcap_{\alpha \geq 1} \bigcap_{p \geq 1} \mathbb{D}^{\alpha, p}$, along with the spaces of distributions $\mathbb{D}^{-\alpha, q}$, which are the dual spaces of $\mathbb{D}^{\alpha, p}$, and generalized Wiener functionals $\mathbb{D}^{-\infty} = \bigcup_{p \geq 1} \bigcup_{\alpha \geq 1} \mathbb{D}^{-\alpha, p}$. Lastly, for a real separable Hilbert space $V$, the notation $\mathbb{D}^{\alpha, p}(V)$ refers to the completion of $V$-valued smooth random variables under the corresponding norm $\|\cdot\|_{\alpha, p, V}$.

\subsection{Properties of GLE \eqref{GLE}}

In this subsection, we collect the fundamental properties of the exact solution to the GLE \eqref{GLE}. We begin by imposing the following mild hypotheses on the potential function $U$, which will be used throughout this paper.

\begin{assumption} \label{ass.1}
Let $m \ge 2$ be an integer. Assume that $U \in C_{p}^{\infty}(\hR)$ and there exist some constants $\epsilon \in(0, 2)$, $C_{i} > 0$, $i = 1, 2, 3, 4, 5$, and $K > 0$ such that
\begin{align}
- C_{3} + C_{1}|x|^{2 m} 
&\leq U(x) \leq C_{2}|x|^{2 m} + C_{3}, 
&& \forall\, x \in \hR, \label{superu} \\
|\nabla^{2} U(x)| 
&\leq C_{2}|x|^{2 m-\epsilon} + C_{3}, 
&& \forall\, x \in \hR, \label{secondu} \\
\langle\nabla U(x), x\rangle 
&\ge C_{4}|x|^{2 m} - C_{5}, 
&& \forall\, x \in \hR, \label{fisrtu} \\
\nabla^2 U(x) 
&\ge - K,
&& \forall\, x \in \hR. \label{nabla2U}
\end{align}
\end{assumption}

Clearly, it follows from \eqref{secondu} that 
\begin{align} 
|\nabla U(x) - \nabla U(y)| 
&\leq C_U (1 + |x|^{2 m-\epsilon} + |y|^{2 m-\epsilon}) |x - y|, 
&& \forall\, x, y \in \hR, \label{deu} \\ 
|\nabla U(x)|
&\leq C(1 + |x|^{2 m + 1 - \epsilon}), 
&& \forall\, x \in \hR. \label{onedu}
\end{align}
Here, $C_U$ and $C$ are positive constants. Next, we state the well-posedness result that guarantees the existence and uniqueness of strong solution to the GLE \eqref{GLE}. For the detailed proof, we refer to \cite[Proposition 6]{GlattNathanHerzogDavidMcKinley2020}.

\begin{proposition}[Well-posedness] \label{prop:GLE:well-posed} 
If Assumption \ref{ass.1} holds, then the GLE \eqref{GLE} admits a unique strong solution $Y = \big(v, z^{\top}, x\big)^{\top} \in L^p(\Omega, \cC ([0, T], \hR^{k + 2}))$ with $p \geq 1$, namely, 
\begin{align} \label{ME}
 \hE \left[ \sup_{0\le t\le T}\|Y(t)\|^p \right] \le C. 
\end{align}
\end{proposition}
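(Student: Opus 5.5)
Local existence and uniqueness are immediate, so the proof reduces to an a priori moment bound obtained from a Lyapunov function. By Assumption \ref{ass.1}, $U\in C_p^\infty$, so the drift of \eqref{GLE} is smooth and locally Lipschitz while the diffusion matrix is constant. Standard theory then gives a unique strong solution up to the explosion time $\tau_\infty=\lim_{R\to\infty}\tau_R$, where $\tau_R=\inf\{t\ge0:\|Y(t)\|\ge R\}$. It remains to show that $\tau_\infty=\infty$ almost surely and to establish \eqref{ME}.

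\textbf{Lyapunov function.} We use the Hamiltonian \eqref{Hamiltonian0}, shifted to be positive: $V:=H_0+C_3+1\ge1$. It\^o's formula gives
\begin{align*}
\rd H_0 = \Big(-\gamma|v|^2-\sum_{\ell=1}^k\alpha_\ell z_\ell^2+\gamma+\sum_{\ell=1}^k\alpha_\ell\Big)\rd t+\sqrt{2\gamma}\,v\,\rd W_0+\sum_{\ell=1}^k\sqrt{2\alpha_\ell}\,z_\ell\,\rd W_\ell .
\end{align*}
The cross terms cancel exactly: $\nabla U(x)$ from the $v$-equation against $\rd x=v\,\rd t$, and $\lambda_\ell z_\ell v$ from the $v$-equation against $-\lambda_\ell v z_\ell$ from the $z_\ell$-equation. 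Hence the generator satisfies $\mathcal L V\le c_0:=\gamma+\sum_\ell\alpha_\ell$.

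For $p\ge2$, apply It\^o's formula to $V^{p/2}$. The second-order term is bounded by
\begin{align*}
C V^{p/2-2}\big(|v|^2+\|z\|^2\big)\le C V^{p/2-1},
\end{align*}
so $\mathcal L V^{p/2}\le C_pV^{p/2}$. Stopping at $\tau_R$, taking expectations and applying Gronwall's inequality yields
\begin{align*}
\hE[V(Y(t\wedge\tau_R))^{p/2}]\le e^{C_pT}V(Y_0)^{p/2}.
\end{align*}
By the lower bound in \eqref{superu}, $V\ge c\|Y\|^2$ for $\|Y\|$ large, because $|x|^{2m}\ge|x|^2$ there. Therefore $\mathbb P(\tau_R\le T)\,c R^p\le C$, so $\tau_R\to\infty$ almost surely and global existence follows.

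\textbf{Sup inside the expectation.} Write
\begin{align*}
V(Y(t))^{p/2}=V(Y_0)^{p/2}+\int_0^t\mathcal L V^{p/2}\,\rd s+M_t,
\end{align*}
with local martingale $M$. The drift integral is controlled by $\int_0^T V^{p/2}\,\rd s$, whose expectation is bounded by the previous step. For the martingale, the Burkholder--Davis--Gundy inequality gives
\begin{align*}
\hE\sup_{t\le T}|M_t|\le C\,\hE\Big(\int_0^T V^{p-2}\big(|v|^2+\|z\|^2\big)\,\rd s\Big)^{1/2}
\le C\,\hE\Big[\sup_{t\le T}V^{p/4}\Big(\int_0^TV^{p/2}\,\rd s\Big)^{1/2}\Big].
\end{align*}
Young's inequality bounds this by $\tfrac12\hE\sup_tV^{p/2}+C\int_0^T\hE V^{p/2}\,\rd s$. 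The first term is absorbed into the left-hand side; to keep it finite while doing so, we work with the stopped processes at $\tau_R$ and let $R\to\infty$ by Fatou's lemma. Combining with $\|Y\|^p\le C V^{p/2}$ then gives \eqref{ME} for every $p\ge2$, and Hölder's inequality covers $p\in[1,2)$.

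The main point to watch is the exact cancellation in $\mathcal L H_0$. It holds only because the coupling coefficients $\pm\lambda_\ell$ are antisymmetric and the noise is additive, so the bound $\mathcal L V\le c_0$ requires no growth condition on $\nabla U$. Everything else is routine. Only \eqref{superu} is needed, and only to make $V$ coercive; this matches \cite[Proposition 6]{GlattNathanHerzogDavidMcKinley2020}.
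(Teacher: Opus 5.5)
Your proof is correct. The paper does not prove this proposition itself; it only cites \cite[Proposition 6]{GlattNathanHerzogDavidMcKinley2020}. So your argument is a self-contained replacement for that citation. It rests on the same Hamiltonian structure the paper uses later in Proposition \ref{expo}, namely the exact identity $\mathcal{L}H_0=-\gamma v^2-\sum_{\ell}\alpha_\ell z_\ell^2+\gamma+\sum_\ell\alpha_\ell$, which is what drives the estimate for $\widetilde H_0$ there.

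Compared with the bare citation, your route shows that only the local Lipschitz continuity of $\nabla U$ and the lower bound in \eqref{superu} are needed. Your BDG and absorption step, with stopping at $\tau_R$ and then letting $R\to\infty$, is what places the supremum inside the expectation. Proposition \ref{expo} alone would not do this, because it is a fixed-time bound and would give only $\sup_t\hE\|Y(t)\|^p$.

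One small slip in justification: the coercivity $V\ge c\|Y\|^2$ holds globally with $c=\min\{\tfrac12,C_1\}$, because $C_1|x|^{2m}+1\ge\min\{C_1,1\}|x|^2$ for every $x$. It is not just a large-$\|Y\|$ fact coming from $|x|^{2m}\ge|x|^2$, since a large $\|Y\|$ need not mean a large $|x|$. This does not affect the argument.
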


By \eqref{superu}, there exists $C > 0$ such that $U(x) + C > 0$ for all $x \in \hR$. In addition to \eqref{ME}, the solution to the GLE \eqref{GLE} is also exponentially integrable, which follows from \cite[Lemma 3.2]{CuiHongSheng2022} with $\mathcal{V} = \widetilde{H}_{0}$ and $\bar{\mathcal{V}} = - K_{0} \big( \gamma + \sum_{\ell = 1}^{k} \alpha_{\ell} \big)$. Here, $\widetilde{H}_{0} = K_{0}(H_{0} + C)$ with some $K_{0} > 0$.

\begin{proposition}[Exponential integrability] \label{expo}
Let $\overline{\alpha}_{\gamma} := \gamma \vee \alpha_{1} \vee \cdots \vee \alpha_{k}$ and $\underline{\alpha}_{\gamma} := \gamma \wedge \alpha_{1} \wedge \cdots \wedge \alpha_{k}$. If Assumption \ref{ass.1} holds, then for any constant $\eta \ge 2 (K_{0}\overline{\alpha}_{\gamma} - \underline{\alpha}_{\gamma}) \vee 0$, 
\begin{align} \label{exactexpo}
\sup_{t \in[0, T]} \hE \left[ \exp \left( \frac{\widetilde{H}_{0}(Y(t))}{e^{\eta t}} \right) \right] 
\leq \exp \left(\frac{K_{0}}{\eta} \Big( \gamma + \sum_{\ell = 1}^{k} \alpha_{\ell} \Big) \right) e^{\widetilde{H}_{0}(Y(0))} . 
\end{align}
\end{proposition}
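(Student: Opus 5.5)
We verify the hypotheses of \cite[Lemma 3.2]{CuiHongSheng2022}, which (in the form we use) reads as follows. If $\mathcal V\in C^2$ and $\bar{\mathcal V}$ satisfy
\begin{align*}
L\mathcal V+\tfrac12|\sigma^\top\nabla\mathcal V|^2\le \eta\mathcal V+\bar{\mathcal V},
\end{align*}
where $L$ is the generator and $\sigma$ the diffusion matrix, then
\begin{align*}
\hE\big[\exp(e^{-\eta t}\mathcal V(Y(t))+\int_0^t e^{-\eta s}\bar{\mathcal V}\,\rd s)\big]\le e^{\mathcal V(Y(0))}.
\end{align*}
We take $\mathcal V=\widetilde H_0=K_0(H_0+C)$, which is positive and $C^\infty$ by Assumption \ref{ass.1}. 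Since $\bar{\mathcal V}$ is constant and $\int_0^t e^{-\eta s}\,\rd s\le 1/\eta$, the bound \eqref{exactexpo} follows once the differential inequality holds with $\bar{\mathcal V}=-K_0(\gamma+\sum_\ell\alpha_\ell)$.

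\textbf{Generator.} Write $\sigma$ for the diagonal noise on $(v,z)$ with entries $\sqrt{2\gamma}$ and $\sqrt{2\alpha_\ell}$. Since $\nabla_x\widetilde H_0=K_0\nabla U$, $\nabla_v\widetilde H_0=K_0v$ and $\nabla_{z_\ell}\widetilde H_0=K_0z_\ell$, the Hamiltonian part cancels. The transport terms $v\nabla U-v\nabla U$ cancel, as do the coupling terms $\lambda_\ell z_\ell v-\lambda_\ell v z_\ell$. This leaves
\begin{align*}
L\widetilde H_0=K_0\Big(-\gamma|v|^2-\sum_\ell\alpha_\ell z_\ell^2+\gamma+\sum_\ell\alpha_\ell\Big),
\end{align*}
where the constant comes from the second-order terms $\tfrac12\cdot 2\gamma\cdot1$ and $\tfrac12\cdot2\alpha_\ell\cdot1$.

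\textbf{Carré du champ.} The noise term is
\begin{align*}
\tfrac12|\sigma^\top\nabla\widetilde H_0|^2=K_0^2\big(\gamma|v|^2+\sum_\ell\alpha_\ell z_\ell^2\big).
\end{align*}

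\textbf{Combining.} The sum of the two pieces is
\begin{align*}
K_0\big[(K_0-1)\gamma|v|^2+\sum_\ell(K_0-1)\alpha_\ell z_\ell^2\big]+K_0\Big(\gamma+\sum_\ell\alpha_\ell\Big).
\end{align*}
The bracket is at most $(K_0\overline\alpha_\gamma-\underline\alpha_\gamma)(|v|^2+\|z\|^2)$, since $K_0\gamma-\gamma\le K_0\overline\alpha_\gamma-\underline\alpha_\gamma$, and likewise for each $\alpha_\ell$. Because $\widetilde H_0\ge \tfrac{K_0}{2}(|v|^2+\|z\|^2)$, the bracket times $K_0$ is bounded by $2(K_0\overline\alpha_\gamma-\underline\alpha_\gamma)\widetilde H_0\le\eta\widetilde H_0$ when this coefficient is nonnegative. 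When the coefficient is negative, the bracket is $\le 0\le\eta\widetilde H_0$ because $\widetilde H_0>0$.

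The sign of the remaining constant is the main subtlety. The Cui--Hong--Sheng lemma is stated with $-\bar{\mathcal V}$ on the right side, that is, $L\mathcal V+\tfrac12|\sigma^\top\nabla\mathcal V|^2\le\eta\mathcal V-\bar{\mathcal V}$ with the weight $e^{-\int\bar{\mathcal V}}$ moved across. With $\bar{\mathcal V}=-K_0(\gamma+\sum\alpha_\ell)$ this gives
\begin{align*}
\hE\exp(e^{-\eta t}\widetilde H_0)\le e^{\widetilde H_0(Y(0))}\exp\Big(K_0\big(\gamma+\sum_\ell\alpha_\ell\big)\int_0^t e^{-\eta s}\,\rd s\Big),
\end{align*}
and the last integral is at most $1/\eta$. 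The case $\eta=0$ would need a separate remark; there the bound degenerates to $e^{Ct}$.

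The only real obstacle is applying the lemma without a priori integrability. This is handled by localization with stopping times $\tau_R=\inf\{t:\|Y(t)\|\ge R\}$ together with Fatou's lemma, using the nonexplosion from Proposition \ref{prop:GLE:well-posed}.
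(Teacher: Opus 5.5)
Your proof is correct and takes the paper's route: the paper simply invokes \cite[Lemma 3.2]{CuiHongSheng2022} with $\mathcal V=\widetilde H_0$ and $\bar{\mathcal V}=-K_0(\gamma+\sum_{\ell=1}^k\alpha_\ell)$, and your generator and carr\'e du champ computation is the verification the paper leaves implicit (the Hamiltonian and coupling terms cancel, leaving $K_0[(K_0-1)\gamma|v|^2+\sum_\ell (K_0-1)\alpha_\ell z_\ell^2]+K_0(\gamma+\sum_\ell\alpha_\ell)$). Two presentation fixes: state the lemma once, in its correct sign convention, since your first display with $+\bar{\mathcal V}$ on the right is inconsistent with the conclusion you actually use; and note that the lemma puts the weight $e^{-\eta t}$ on the carr\'e du champ term, so your unweighted inequality is a stronger, hence sufficient, condition.
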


Using a similar argument as in \cite[Lemma 3.4]{CuiHongSheng2022}, the following two propositions can also be obtained for the exact solution to the GLE \eqref{GLE}.

\begin{proposition}[Malliavin differentiability] \label{Malliavindifferentiability}
If Assumption \ref{ass.1} holds, then
\begin{itemize}
\item[(i)] $Y(t) \in \mathbb{D}^{\infty}(\hR^{k + 2})$. 
 
\item[(ii)] For $p \geq 1$ and $\alpha \in \mathbb{N}_{+}$, 
\begin{align*}
\sup_{r_1, \ldots, r_\alpha \in[0, T]} \hE \left[ \sup_{r_1 \vee \cdots \vee r_\alpha \leq t \leq T} \left\|\mathcal{D}_{r_1, \ldots, r_\alpha}(Y(t))\right\|^p \right] \leq C(p, T, \alpha). 
\end{align*}
\end{itemize}
\end{proposition}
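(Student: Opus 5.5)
We argue by induction on the order $\alpha$, following the scheme of \cite[Lemma 3.4]{CuiHongSheng2022}. The only new ingredient relative to the underdamped Langevin case is the presence of the linear auxiliary components $z_\ell$, which do not change the structure of the argument.

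\textbf{Local Malliavin differentiability.} Since $\nabla U$ is only locally Lipschitz, we first truncate. Let $U_R\in C^\infty$ coincide with $U$ on $\{|x|\le R\}$ and have bounded derivatives of all orders. The system \eqref{GLE} with $U$ replaced by $U_R$ has globally Lipschitz smooth coefficients with bounded derivatives. Classical results \cite[Theorem 2.2.2]{NualartDavid2006} then give $Y_R(t)\in\mathbb{D}^\infty$. Define the stopping time $\tau_R=\inf\{t:|x(t)|\ge R\}$. On $\Omega_R=\{\tau_R>T\}$ we have $Y=Y_R$ on $[0,T]$. By \eqref{ME}, $\mathbb{P}(\Omega_R^c)\to0$, so $Y(t)\in\mathbb{D}^{\infty}_{loc}$, and on $\Omega_R$ the derivatives are given by the linearized equations below. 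To upgrade local differentiability to $Y(t)\in\mathbb{D}^\infty$, it suffices to show that the $p$-th moments of $\mathcal{D}^\alpha Y_R(t)$ are bounded uniformly in $R$. This uniform bound is exactly the estimate (ii) with constants independent of $R$; by the closability of $\mathcal{D}$ and a weak compactness argument, (i) then follows from (ii).

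\textbf{First-order derivative.} For $r\le t$, $\mathcal{D}_rY(t)$ solves the random linear ODE
\[
\frac{\rd}{\rd t}\mathcal{D}_rY(t)=A(Y(t))\,\mathcal{D}_rY(t),
\]
with initial value $\mathcal{D}_rY(r)=\Sigma$, the constant diffusion matrix. Here $A(Y)$ is the constant linear part plus the single entry $-\nabla^2U(x)$ in the $(v,x)$ position. The key step is an energy estimate that avoids $|\nabla^2U|$. Test the equation against a weighted quadratic form such as $|\mathcal{D}v|^2+\|\mathcal{D}z\|^2+|\mathcal{D}x|^2$. The only dangerous term is $-\nabla^2U(x)\,\mathcal{D}x\,\mathcal{D}v$. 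By \eqref{nabla2U} its negative part is controlled by $K$, but its positive part is not. We therefore bound it by $|\nabla^2U(x)|\,|\mathcal{D}x||\mathcal{D}v|$, which by Gronwall gives
\[
\|\mathcal{D}_rY(t)\|\le C\exp\Big(C\int_r^t(1+|\nabla^2U(x(s))|)\,\rd s\Big).
\]
By \eqref{secondu}, $|\nabla^2U(x)|\le C_2|x|^{2m-\epsilon}+C_3$. Young's inequality then gives $p\,|x|^{2m-\epsilon}\le \delta|x|^{2m}+C_{\delta,p}$, where the strict gap $\epsilon>0$ is what allows arbitrary $p$. By \eqref{superu}, $\delta|x|^{2m}\le \delta' \widetilde H_0(Y)+C$ with $\delta'$ arbitrarily small. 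Combining Jensen's inequality in time with Proposition \ref{expo} (choosing $\delta'\le e^{-\eta T}$) yields
\[
\hE\Big[\exp\Big(p\int_0^T|\nabla^2U(x(s))|\,\rd s\Big)\Big]\le C(p,T).
\]
This bounds $\hE\sup_t\|\mathcal{D}_rY(t)\|^p$ uniformly in $r$. The same computation applies to $Y_R$ uniformly in $R$, since $|\nabla^2U_R|$ can be built to satisfy \eqref{secondu} with the same constants.

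\textbf{Higher orders.} By induction, $\mathcal{D}_{r_1,\dots,r_\alpha}Y(t)$ satisfies the same linear equation $\frac{\rd}{\rd t}\xi=A(Y)\xi+B_\alpha(t)$. The forcing $B_\alpha$ is a sum of products of $\nabla^{j}U(x)$ with $j\le\alpha+1$ and lower-order derivatives. Since $U\in C_p^\infty$, these factors have polynomial growth, and their moments are bounded by \eqref{ME} and the induction hypothesis. The inhomogeneous term appears only through the initial data at $r_1\vee\dots\vee r_\alpha$. Applying the variation-of-constants formula with the fundamental matrix bounded above, together with Hölder's inequality, closes the induction.

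\textbf{Main obstacle.} The main obstacle is the exponential moment of $\int|\nabla^2U(x)|\,\rd s$. It is resolved only because $\epsilon>0$ in \eqref{secondu} lets Young's inequality trade any power for a small multiple of $|x|^{2m}$, which Proposition \ref{expo} can then absorb.
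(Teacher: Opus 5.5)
Your estimates on the linearized equations are correct, and they follow the scheme of \cite[Lemma 3.4]{CuiHongSheng2022}, which is all the paper offers as a proof. The $\lambda_\ell$ cross terms cancel in the energy identity, and Gr\"onwall gives $\|\mathcal{D}_rY(t)\|\le C\exp\big(C\int_0^T(1+|\nabla^2U(x(s))|)\,\rd s\big)$. The gap $\epsilon>0$ in \eqref{secondu}, together with \eqref{superu}, Jensen and Proposition \ref{expo}, controls the exponential moment, and the higher-order induction by variation of constants closes.

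The gap is in the step that upgrades $Y(t)\in\mathbb{D}^\infty_{loc}$ to $Y(t)\in\mathbb{D}^\infty$. This is precisely where the non-Lipschitz drift matters for (i). You need bounds on $\mathcal{D}^\alpha Y_R(t)$ uniform in $R$, and you justify them by saying ``the same computation applies'' because $\nabla^2U_R$ can satisfy \eqref{secondu}. But that computation also turns $|x|^{2m}$ into $\widetilde H_0$ via the lower bound in \eqref{superu}, and then uses exponential integrability of the process at hand. Your $U_R$ has bounded derivatives, hence grows at most linearly, so \eqref{superu} fails for it. The only exponential integrability $Y_R$ can have is in terms of $H_{0,R}=\frac12|v|^2+\frac12\|z\|^2+U_R(x)$, which does not dominate $|x|^{2m-\epsilon}$ outside the ball. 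So a bound $|\nabla^2U_R(x)|\le C_2|x|^{2m-\epsilon}+C_3$ buys you nothing. Proposition \ref{expo} for $Y$ does not help either, since $x_R\neq x$ on $\Omega_R^c$. As written, the uniform bound, and hence (i), is not established. The same looseness affects the uniform moment bounds you tacitly need for $Y_R(t)\to Y(t)$ in $L^p$. It also affects the uniform polynomial bounds on $\nabla^jU_R$, $j\ge3$, in the higher-order step.

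What you actually need is twofold.
\begin{itemize}
\item[(a)] Exponential integrability of $Y_R$ uniformly in $R$. This does hold: the argument behind Proposition \ref{expo} only uses that the Hamiltonian part conserves $H_{0,R}$ and that $U_R$ is bounded below uniformly in $R$. The same energy argument gives the uniform moments.
\item[(b)] The relative bound $|\nabla^2U_R|\le\delta U_R+C_\delta$ for every $\delta>0$, with $C_\delta$ independent of $R$, in place of \eqref{secondu}.
\end{itemize}
Both hold for a specific truncation. Set $\nabla^2U_R:=\chi(\cdot/R)\nabla^2U$, where $\chi$ is a smooth cutoff equal to $1$ on $[-1,1]$ and $0$ off $[-2,2]$, and integrate twice from $0$. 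By \eqref{fisrtu} and \eqref{nabla2U}, for large $R$, $U_R$ is monotone on $|x|\ge R$ with $U_R\ge U(\pm R)\gtrsim R^{2m}$ there. Meanwhile $|\nabla^2U_R|\lesssim R^{2m-\epsilon}$ on $R\le|x|\le2R$ and $\nabla^2 U_R$ vanishes beyond. The higher derivatives are polynomially bounded uniformly in $R$, and $\nabla U_R$ is bounded, so \cite[Theorem 2.2.2]{NualartDavid2006} still applies for each fixed $R$.

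Alternatively, you can keep a truncation with $\sup|\nabla^2U_R|\lesssim R^{2m-\epsilon}$ and use the supermartingale (maximal) form of the exponential integrability. This gives $\mathbb{P}(\Omega_R^c)\le Ce^{-cR^{2m}}$, which beats the factor $e^{CR^{2m-\epsilon}}$ incurred on $\Omega_R^c$.
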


\begin{proposition} [Uniform non-degeneracy]
\label{exactcovarianmatrix}
If Assumption \ref{ass.1} holds, then for any $1 \leq p<\infty$, 
\begin{align*}
\sup _{t \in (0,T]}\left\|\operatorname{det}\left(\Gamma_{t}\right)^{-1}\right\|_{L^p(\Omega)}<\infty ,
\end{align*}
where $\Gamma_t := \int_0^t \mathcal{D}_r Y(t)\left(\mathcal{D}_r Y(t)\right)^{\top} \rd r$ is the Malliavin covariance matrix of $Y(t)$. 
\end{proposition}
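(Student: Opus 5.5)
The plan is to follow the Norris-lemma approach of \cite[Lemma 3.4]{CuiHongSheng2022}, adapted to the chain of Lie brackets in the GLE \eqref{GLE}. It is enough to show that for every $p\ge 1$ there are $\varepsilon_0>0$ and $C_p$, both independent of $t\in(0,T]$, such that
\begin{align*}
\mathbb P\Big(\inf_{|\xi|=1}\xi^{\top}\Gamma_t\xi<\varepsilon\Big)\le C_p\,\varepsilon^{p},\qquad \varepsilon\in(0,\varepsilon_0).
\end{align*}
The bound on $\|\det(\Gamma_t)^{-1}\|_{L^p(\Omega)}$ then follows from $\det\Gamma_t\ge(\inf_{|\xi|=1}\xi^\top\Gamma_t\xi)^{k+2}$ and the layer-cake formula. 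To get there, I write $\mathcal D_rY(t)=J_tJ_r^{-1}\sigma$, where $J$ is the Jacobian flow and $\sigma=\mathrm{diag}(\sqrt{2\gamma},\sqrt{2\alpha_1},\dots,\sqrt{2\alpha_k})$ embedded in the first $k+1$ rows. This gives $\Gamma_t=J_tC_tJ_t^{\top}$ with reduced covariance $C_t=\int_0^t J_r^{-1}\sigma\sigma^\top J_r^{-\top}\rd r$. Since $J_t^{-1}$ has all moments, uniformly in $t$, by Proposition \ref{Malliavindifferentiability} and the exponential integrability in Proposition \ref{expo}, it suffices to prove the small-ball bound for $C_t$.

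\textbf{Step 1 (brackets).} Let $\xi$ be a unit vector and $\xi^\top C_t\xi=\sum_{j}\int_0^t|\langle \xi,J_r^{-1}\sigma_j\rangle|^2\rd r$. If this is small, then every $\langle\xi,J_r^{-1}\sigma_j\rangle$ is small in $L^2(0,t)$. I then apply Itô's formula to these processes. The drift of $\langle\xi,J_r^{-1}V\rangle$ is $\langle\xi,J_r^{-1}[V,b]\rangle$, where $b$ is the drift field. The noise direction $\sigma_0=\sqrt{2\gamma}\,\partial_v$ produces $[\partial_v,b]=-\gamma\partial_v-\sum_\ell\lambda_\ell\partial_{z_\ell}+\partial_x$. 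The directions $\partial_{z_\ell}$ are already in the span of $\sigma$. So $\partial_x$ is reached at first bracket level, and $\{\sigma_j\}\cup\{[\sigma_0,b]\}$ spans $\hR^{k+2}$ at every point. Because the span is achieved at a single bracket level with constant coefficients, no polynomial growth from $\nabla U$ enters the bracket itself. The potential only enters the remainder terms, and their moments are controlled by \eqref{onedu}, \eqref{ME} and Proposition \ref{expo}.

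\textbf{Step 2 (Norris lemma).} I apply Norris' lemma to the semimartingale $\langle\xi,J_r^{-1}\sigma_0\rangle$, whose drift is $\langle \xi,J_r^{-1}[\sigma_0,b]\rangle$. The conclusion is the following: off an event of probability $\le C\varepsilon^p$, smallness of $\int_0^t\langle\xi,J_r^{-1}\sigma_0\rangle^2\rd r$ forces smallness of $\int_0^t\langle\xi,J_r^{-1}[\sigma_0,b]\rangle^2\rd r$. Summing over the spanning family and using a uniform lower bound on the spanning constants gives $|\xi|^2\lesssim$ a small quantity, which is a contradiction. A finite covering of the sphere upgrades the estimate from fixed $\xi$ to the infimum over $|\xi|=1$.

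\textbf{Main obstacle.} The hardest step is uniformity as $t\downarrow0$. The Norris estimate on $[0,t]$ has constants depending on $t$, and for a hypoelliptic system the reduced covariance naturally scales like $t$ in the noise directions and like $t^3$ in the $x$ direction. To keep the statement as written, I would first try to track this dependence explicitly. The specific question is whether the exponents chosen in Norris' lemma can absorb the powers of $t$ uniformly over $(0,T]$ once the moment bounds from Propositions \ref{expo} and \ref{Malliavindifferentiability} are used. This is exactly the point I cannot yet see how to close. If the explicit tracking fails, then the uniform bound cannot come from this argument, and the statement itself has to be re-examined: the heuristic scalings above suggest $\hE[\det(\Gamma_t)^{-p}]$ grows as $t\downarrow0$. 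The proposal therefore does not claim to close the uniform-in-$t$ step.
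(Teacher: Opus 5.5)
\textbf{The uniform-in-$t$ bound is false.} You were right to stop at the last step. This is not a gap that better exponents in Norris' lemma could close: the statement as written is false, so no argument can give the uniform-in-$t$ bound.

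Here is why. Since $\operatorname{tr}\Gamma_t=\int_0^t\|\mathcal{D}_rY(t)\|^2\,\rd r$, Proposition \ref{Malliavindifferentiability}(ii) (one derivative, $p=2$) gives $\hE[\operatorname{tr}\Gamma_t]\le Ct$ for all $t\in(0,T]$. AM--GM applied to the eigenvalues of $\Gamma_t$ gives $(\det\Gamma_t)^{1/(k+2)}\le \operatorname{tr}\Gamma_t/(k+2)$. Since $y\mapsto y^{-(k+2)}$ is convex on $(0,\infty)$, Jensen's inequality yields, for every $p\ge1$,
\begin{align*}
\big\|\det(\Gamma_t)^{-1}\big\|_{L^p(\Omega)}
\ge \hE\big[\det(\Gamma_t)^{-1}\big]
\ge \Big(\hE\big[(\det\Gamma_t)^{1/(k+2)}\big]\Big)^{-(k+2)}
\ge c\,t^{-(k+2)},
\end{align*}
which is unbounded as $t\downarrow0$. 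Your finer heuristic is also correct: order $t$ in the $v,z$ directions and $t^3$ in the $x$ direction, so $\det\Gamma_t$ is of order $t^{k+4}$. The crude trace bound above already settles the question, though.

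The paper gives no argument of its own here. It only says the claim follows ``by a similar argument as in'' \cite[Lemma 3.4]{CuiHongSheng2022}. What is true, and what a H\"ormander--Norris argument delivers, is weaker:
\begin{itemize}
\item $\det(\Gamma_t)^{-1}\in L^p(\Omega)$ for each fixed $t\in(0,T]$;
\item the bounds are uniform on $[t_0,T]$ for any $t_0>0$, but blow up polynomially as $t_0\downarrow0$.
\end{itemize}
That weaker version is all the paper actually uses: smoothness of the density of $Y(t)$ at a fixed $t>0$, and the density-convergence corollary at the fixed time $T$.

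\textbf{Your argument proves the corrected statement.} For the fixed-$t$ version, your Steps 1--2 are the standard route and they work, with $t$-dependent constants. Two simplifications are worth noting.
\begin{itemize}
\item Since $\operatorname{tr}\nabla\mu\equiv-(\gamma+\sum_{\ell}\alpha_\ell)$, Liouville's formula makes $\det J_t=e^{-(\gamma+\sum_\ell\alpha_\ell)t}$ deterministic. Hence $\det\Gamma_t=e^{-2(\gamma+\sum_\ell\alpha_\ell)t}\det C_t$, and the reduction to $C_t$ needs no moments of $J_t^{-1}$.
\item The noise is additive, so $r\mapsto\langle\xi,J_r^{-1}\sigma_0\rangle$ is $C^1$ with derivative $-\langle\xi,J_r^{-1}w\rangle$, where $w=\sqrt{2\gamma}(-\gamma,-\lambda_1,\dots,-\lambda_k,1)^{\top}$. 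For $|\xi|=1$, the derivative of $\langle\xi,J_r^{-1}w\rangle$ is in turn bounded by $C\|J_r^{-1}\|(1+|\nabla^2U(x_r)|)$. An elementary interpolation inequality on $[0,t]$ can therefore replace Norris' lemma.
\end{itemize}
The moments of $\|J_r^{-1}\|$ that the interpolation step does need come from three ingredients: $\|J_r^{-1}\|\le\exp(\int_0^r\|\nabla\mu(Y_s)\|\,\rd s)$, \eqref{secondu} (note $2m-\epsilon<2m$), and Proposition \ref{expo}. They do not come from Proposition \ref{Malliavindifferentiability}, which only controls the product $J_tJ_r^{-1}\sigma$.
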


In view of \cite[Theorem 2.1.4]{NualartDavid2006}, Propositions \ref{Malliavindifferentiability} and \ref{exactcovarianmatrix} imply that, for any fixed $t \in (0, T]$, the solution $Y(t)$ to the GLE \eqref{GLE} admits an infinitely differentiable probability density function.

If the system admits a unique invariant measure, then it is ergodic, implying that time averages coincide with ensemble averages. Let $Y(t, y)$ denote the solution process at time $t$ starting from the initial state $y$. The ergodicity of the GLE \eqref{GLE} is established by \cite[Theorem 2.8]{DuongNguyen2024} and is stated in the following proposition.

\begin{proposition}[Ergodicity] \label{ergodicity}
If Assumption \ref{ass.1} holds, then the GLE \eqref{GLE} admits a unique invariant measure $\pi$, as defined in \eqref{Gibbs--Boltzmann}. Moreover, for all $g \in L^2(\hR^{k + 2}, \pi)$, 
\begin{align*}
\lim_{t\to\infty} \frac1t \int_0^t \mathbb E[g(Y(r,\cdot))] \,\rd r 
 = \int_{\hR^{k+2}} g \,\rd \pi \qquad \mbox{in}~~ L^2(\hR^{k + 2}, \pi).
\end{align*} 
\end{proposition}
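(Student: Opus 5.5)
The statement is Proposition \ref{ergodicity}. Its proof rests on two ingredients: the Gibbs--Boltzmann measure \eqref{Gibbs--Boltzmann} is invariant, and it is the only invariant measure. Together these give the $L^2(\pi)$ ergodic theorem for time averages.

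\emph{Invariance of $\pi$.} Write the generator of \eqref{GLE} as
\[
\mathcal L=\gamma\partial_v^2+\sum_{\ell}\alpha_\ell\partial_{z_\ell}^2+\Big(-\gamma v-\nabla U(x)+\sum_\ell\lambda_\ell z_\ell\Big)\partial_v+\sum_\ell(-\alpha_\ell z_\ell-\lambda_\ell v)\partial_{z_\ell}+v\partial_x .
\]
I would check by direct computation that the formal adjoint satisfies $\mathcal L^*e^{-H_0}=0$. The conservative part, consisting of the Hamiltonian transport $v\partial_x-\nabla U\partial_v$ and the antisymmetric coupling $\lambda_\ell(z_\ell\partial_v-v\partial_{z_\ell})$, annihilates any function of $H_0$ and is divergence free. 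Each Ornstein--Uhlenbeck block $\gamma(\partial_v^2-v\partial_v)$ and $\alpha_\ell(\partial_{z_\ell}^2-z_\ell\partial_{z_\ell})$ is symmetric with respect to the Gaussian factor. Integrability of $e^{-H_0}$ follows from \eqref{superu}, so $Z_k<\infty$. To pass rigorously from $\mathcal L^*e^{-H_0}=0$ to $\int \mathbb E g(Y(t,y))\,\pi(\rd y)=\int g\,\rd\pi$, I would use Itô's formula on $C_c^\infty$ test functions. Localization is controlled by the moment bound \eqref{ME} and the exponential integrability of Proposition \ref{expo}.

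\emph{Uniqueness.} I would combine two standard facts.
\begin{itemize}
\item Smoothness of transition densities. This follows from Propositions \ref{Malliavindifferentiability} and \ref{exactcovarianmatrix}, or equivalently from Hörmander's condition. The brackets are $\partial_v$, $[\partial_v,\text{drift}]\ni\partial_x$, and $\partial_{z_\ell}$, which together span $\hR^{k+2}$. This makes the semigroup strong Feller.
\item Topological irreducibility. Every open set is reached with positive probability. I would prove this via the Stroock--Varadhan support theorem: the associated control system, steered through $(W_0,\dots,W_k)$, can move $v$ and $z$ arbitrarily and hence move $x$ through $v$.
\end{itemize}
By Doob--Khasminskii, strong Feller together with irreducibility yields at most one invariant measure. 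Since $\pi$ is invariant, it is the unique invariant measure and it is ergodic.

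\emph{Time averages.} With $\pi$ the unique invariant measure, the Markov semigroup $P_t$ is a contraction on $L^2(\pi)$. The von Neumann mean ergodic theorem then says that $\frac1t\int_0^tP_rg\,\rd r$ converges in $L^2(\pi)$ to the projection onto $P$-invariant functions. By ergodicity, every such invariant function is $\pi$-a.e. constant, which identifies the limit as $\int g\,\rd\pi$.

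Alternatively, one can build a Lyapunov function $V=H_0+\varepsilon(\text{cross terms } \langle x,v\rangle, \langle v,z\rangle)$, using \eqref{fisrtu} to obtain $\mathcal L V\le -cV+C$. This gives geometric ergodicity via Harris's theorem and hence the claim directly; it is the route of \cite{DuongNguyen2024}.

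The main obstacle is the Lyapunov or irreducibility step, because the noise is degenerate. Dissipation in $x$ appears only through the cross term, and the coupling with $z$ forces the $\varepsilon$-perturbation coefficients to be tuned carefully. I would first try $V=H_0+\varepsilon\langle x,v\rangle+\varepsilon^2\sum_\ell\lambda_\ell\langle x,z_\ell\rangle$ and check that the $z$-coupling terms are absorbed for small $\varepsilon$.
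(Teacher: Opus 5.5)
The paper gives no argument of its own here: it quotes \cite[Theorem 2.8]{DuongNguyen2024}, where exponential convergence of the law of $Y(t)$ to $\pi$ comes from a Lyapunov-function (Harris-type) argument, and the $L^2(\pi)$ Ces\`aro limit is a weak consequence of that. Your main route is the classical qualitative one, and it is correct. You get invariance of $e^{-H_0}$ from $\mathcal L^*e^{-H_0}=0$, uniqueness by Doob--Khasminskii (strong Feller from H\"ormander's bracket condition, irreducibility from controllability through $W_0,\dots,W_k$ plus the support theorem), and the limit from von Neumann's mean ergodic theorem for the contraction $P_t$ on $L^2(\pi)$. This proves exactly the stated claim without any drift estimate, because the explicit normalizable density already supplies an invariant probability. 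The cited route costs a Lyapunov function and a minorization, but buys a rate and convergence from every initial point. It also uses the same Lyapunov--minorization template that the paper reuses for the scheme in Section~\ref{sec.Ergodicity}.

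Two steps should be tightened.

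First, It\^o's formula for $\varphi(Y)$ with $\varphi\in C_c^\infty$ only yields $\int P_t\varphi\,\rd\pi-\int\varphi\,\rd\pi=\int_0^t\int P_s(\mathcal L\varphi)\,\rd\pi\,\rd s$, which is circular. Close it with $P_s\mathcal L\varphi=\mathcal L P_s\varphi$, which is bounded, with $P_s\varphi$ smooth by hypoellipticity. Then integrate $\mathcal L P_s\varphi$ against $\chi_R\,\rd\pi$ for a cutoff $\chi_R$ and move the operator onto $\chi_R$. The error is $O(R^{2m}\pi(\|y\|\ge R))\to0$, the worst term being $\nabla U\,\partial_v\chi_R$. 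Alternatively, invoke Echeverr\'{\i}a's theorem. Either way, the relevant control is the tail of $\pi$, not \eqref{ME}.

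Second, Propositions \ref{Malliavindifferentiability} and \ref{exactcovarianmatrix} concern a fixed initial point and do not by themselves give strong Feller. Take strong Feller from H\"ormander's theorem (joint smoothness of the transition kernel), not \emph{equivalently} from those propositions.

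For your alternative route, the $\varepsilon^2\lambda_\ell xz_\ell$ correction is unnecessary. With $V=H_0+\varepsilon xv$, Young's inequality, \eqref{fisrtu} and $x^2\le\delta|x|^{2m}+C_\delta$ (valid since $m\ge2$) give $\mathcal LV\le-cV+C$ for small $\varepsilon$; the paper's $H$ in \eqref{newHamil} has this form. Harris's theorem would still need a minorization, which comes from the same hypoellipticity and controllability.
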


\subsection{The splitting AVF method}

In this subsection, we present a new splitting AVF method tailored for the GLE \eqref{GLE} and analyze its solvability. Inspired by the approach in \cite{ChenDangHongZhang2025}, we first reformulate the GLE \eqref{GLE} as 
\begin{align*}
\rd 
\begin{pmatrix}
v(t) \\
z_1(t) \\
\vdots \\
z_k(t) \\
x(t)
\end{pmatrix}
= 
\underbrace{
\begin{pmatrix}
- \frac{\gamma}{2} v(t) - \nabla U(x(t)) + \sum_{\ell = 1}^k \lambda_{\ell} z_{\ell}(t) \\
- \lambda_1 v(t) - \frac{\gamma}{2} \lambda_1 x(t) \\
\vdots \\
- \lambda_k v(t) - \frac{\gamma}{2} \lambda_k x(t) \\
\frac{\gamma}{2} x(t) + v(t)
\end{pmatrix}
}_{ = :\, \Psi^D} \rd t 
+ 
\underbrace{
\begin{pmatrix}
- \frac{\gamma}{2} v(t) \rd t + \sqrt{2\gamma} \rd W_0(t) \\
( - \alpha_1 z_1(t) + \frac{\gamma}{2} \lambda_1 x(t)) \rd t + \sqrt{2\alpha_1} \rd W_1(t) \\
\vdots \\
( - \alpha_k z_k(t) + \frac{\gamma}{2} \lambda_k x(t)) \rd t + \sqrt{2\alpha_k} \rd W_k(t) \\
- \frac{\gamma}{2} x(t) \rd t 
\end{pmatrix}
}_{ = :\, \Psi^S}. 
\end{align*} 
Here, the deterministic part $\Psi^{D}$ possesses a new Hamiltonian 
\begin{align} \label{newHamil}
H(v, z, x) := H_0(v, z, x) + \frac{\gamma}{2}vx, \qquad v \in \hR,~~z \in \hR^{k},~~x \in \hR,
\end{align}
where $H_0$ is defined in \eqref{Hamiltonian0}, and the stochastic part $\Psi^S$ is linear and dissipative.

Let $h\in(0, 1)$ denote the uniform step size, and set $t_n := n h$ for $n = 0, 1, 2, \cdots, T/h$. Furthermore, denote by $\lfloor t\rfloor :=\sup _{n \leq T/h}\left\{t_n: t_n \leq t\right\}$. We advance the numerical method step by step over the time intervals $T_n := [t_n, t_{n + 1} )$. The numerical solution $Y_{n} := (v_{n}, z_{n}^{\top}, x_{n})^{\top}$ 
is defined recursively by $Y_{0} = (v(0), z(0)^{\top}, x(0))$ and 
\begin{align} \label{numsolu}
Y_{n + 1} = \Phi_{T_{n}, t_{n + 1}}^{S} \Phi_{T_{n}, t_{n + 1}}^{D}(Y_{n}), \qquad n \in \mathbb{N}, 
\end{align}
where $\Phi_{T_{n}, t_{n + 1}}^{D}$ and $\Phi_{T_{n}, t_{n + 1}}^{S}$ denote the one-step approximation of the deterministic subsystem and the flow of the stochastic subsystem, respectively. More precisely, for the deterministic subsystem, we apply the AVF method, denoted by $\bar{Y}_{T_n, t_{n+1}} = (\bar{v}_{T_n, t_{n+1}}, \bar{z}_{T_n, t_{n+1}}^\top, \bar{x}_{T_n, t_{n+1}})^\top =: \Phi_{T_{n}, t_{n + 1}}^{D}(Y_{n})$, to preserve its Hamiltonian, in which 
\begin{align} \label{avfmethod}
\bar{v}_{T_{n}, t_{n + 1}} = v_{n} + \mathcal{A}^{n}, \qquad
\bar{z}^{\ell}_{T_{n}, t_{n + 1}} = z^{\ell}_{n} + \mathcal{B}_{\ell}^{n}, \qquad
\bar{x}_{T_{n}, t_{n + 1}} = x_{n} + \mathcal{C}^{n} 
\end{align} 
with $\ell = 1, 2, \cdots, k$ and 
\begin{align*}
&\mathcal{A}^{n} 
:= - \frac{\gamma}{4}h(\bar{v}_{T_n, t_{n + 1}} + v_{n}) - h \int_{0}^{1}\nabla U(x_{n} + \theta(\bar{x}_{T_n, t_{n + 1}} - x_{n}))\rd \theta + \frac{h}{2} \sum\limits^{k}_{\ell = 1}\lambda_{\ell}(\bar{z}_{T_{n}, t_{n + 1}}^{\ell} + z_{n}^{\ell}), \\
&\mathcal{B}_{\ell}^{n}
:= - \frac{\lambda_{\ell}}{2}h(\bar{v}_{T_n, t_{n + 1}} + v_{n}) - \frac{\gamma\lambda_{\ell}}{4}h(\bar{x}_{T_n, t_{n + 1}} + x_{n}), \\
&\mathcal{C}^{n}
:= \frac{\gamma}{4}h(\bar{x}_{T_n, t_{n + 1}} + x_{n}) + \frac{h}{2}(\bar{v}_{T_n, t_{n + 1}} + v_{n}). 
\end{align*} 
For the stochastic subsystem, using Duhamel's formula yields $\tilde{Y}_{T_{n}, t} = (\tilde{v}_{T_{n}, t}, \tilde{z}_{T_{n}, t}^{\top}, \tilde{x}_{T_{n}, t})^\top$ with 
\begin{align} \label{sdesolu}
\begin{cases}
\tilde{v}_{T_n, t} 
= e^{-\frac{\gamma}{2} (t - t_n)} \bar{v}_{T_{n}, t_{n + 1}} + \int_{t_{n}}^{t} e^{-\frac{\gamma}{2}(t - s)} \sqrt{2 \gamma} \rd W_{0}(s), \\
\tilde{z}_{T_{n}, t}^{\ell} 
= e^{-\alpha_{\ell} (t - t_n)} \bar{z}_{T_{n}, t_{n + 1}}^{\ell} + \frac{\gamma \lambda_{\ell} (e^{-\frac{\gamma}{2} (t - t_n)} - e^{-\alpha_{\ell} (t - t_n)})}{2 \alpha_{\ell} - \gamma} \bar{x}_{T_{n}, t_{n + 1}} + \int_{t_{n}}^{t} e^{-\alpha_{\ell}(t - s)} \sqrt{2 \alpha_{\ell}} \rd W_{\ell}(s), \\
\tilde{x}_{T_{n}, t} 
= e^{-\frac{\gamma}{2} (t - t_n)} \bar{x}_{T_{n}, t_{n + 1}}. 
\end{cases}
\end{align}
Consequently, we arrive at $Y_{n+1} = (\tilde{v}_{T_{n}, t_{n+1}}, \tilde{z}_{T_{n}, t_{n+1}}^{\top}, \tilde{x}_{T_{n}, t_{n+1}})^\top = :\Phi_{T_{n}, t_{n+1}}^{S}(\bar{Y}_{T_n, t_{n+1}})$. To facilitate the theoretical analysis, we also introduce a time-continuous version for $t \in T_n$, given by 
\begin{align} \label{continuousversion}
\begin{cases}
\tilde{v}_{t} 
 = v_n + \int_{t_n}^t \big( h^{-1} \mathcal{A}^n - \frac{\gamma}{2} \tilde{v}_{s} \big) \rd s + \int_{t_{n}}^{t} \sqrt{2 \gamma} \rd W_{0}(s), \\
\tilde{z}_{t}^{\ell} 
 = z^\ell_n + \int_{t_n}^t \big( h^{-1} \mathcal{B}_\ell^n - \alpha_\ell \tilde{z}_{s}^\ell + \frac{\gamma}{2} \lambda_\ell \tilde{x}_{s} \big) \rd s + \int_{t_{n}}^{t} \sqrt{2 \alpha_{\ell}} \rd W_{\ell}(s), \quad \ell=1,2,\cdots,k, \\
\tilde{x}_{t} 
 = x_n + \int_{t_n}^t \big( h^{-1} \mathcal{C}^n - \frac{\gamma}{2} \tilde{x}_{s} \big) \rd s. 
\end{cases}
\end{align}

Next, we consider the solvability of the splitting AVF method \eqref{numsolu}--\eqref{sdesolu}. Clearly, only \eqref{avfmethod} is implicit, and we denote 
\begin{align} \label{implicitavf}
G(\bar{Y}_{T_{n}, t_{n + 1}}, Y_{n}, h) := \left(\begin{array}{c}
\bar{v}_{T_n, t_{n + 1}} - v_{n} - \mathcal{A}^{n} \\
\bar{z}_{T_{n}, t_{n + 1}}^{1} - z_n^1 -\mathcal{B}_1^n \\
\vdots\\
\bar{z}_{T_{n}, t_{n + 1}}^{k} - z_n^k -\mathcal{B}_k^n \\
\bar{x}_{T_{n}, t_{n + 1}} - x_n - \mathcal{C}^n \end{array}\right)
 = \mathbf{0} \in \hR^{k+2}
\end{align}
and
\begin{align} \label{eq.def:h_star}
h^{*} := \min \left\{\frac{4}{K + 1}, \, \min_{1 \le \ell \le k} \left\{\frac{8}{\gamma \lambda_{\ell}^{2}}\right\}, \, \frac{8}{2 \gamma + 2(K + 1) + \gamma k}\right\}. 
\end{align}

\begin{proposition} [Existence and uniqueness] \label{solvability} 
The splitting AVF method \eqref{numsolu}--\eqref{sdesolu} is uniquely solvable for all $h \in (0 , h^*)$.
\end{proposition}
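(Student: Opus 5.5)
Only the AVF step \eqref{avfmethod} is implicit; the stochastic flow \eqref{sdesolu} is explicit. So it suffices to show that, for every fixed $Y_n \in \hR^{k+2}$ and $h \in (0,h^*)$, the equation $G(\bar Y, Y_n, h) = \mathbf 0$ in \eqref{implicitavf} has exactly one solution $\bar Y$. I would first eliminate the linear unknowns to get a scalar equation in $\bar x$, and then show that this scalar map is strictly monotone and coercive.

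\textbf{Step 1: reduce to a system in $(\bar v,\bar x)$.} Each $\bar z^\ell$ enters linearly through $\mathcal B^n_\ell$, so it is an explicit affine function of $(\bar v,\bar x)$:
\[
\bar z^\ell = z_n^\ell - \tfrac{\lambda_\ell h}{2}(\bar v+v_n) - \tfrac{\gamma\lambda_\ell h}{4}(\bar x+x_n).
\]
Substituting this into $\mathcal A^n$ gives a linear equation for $\bar v$ with coefficient
\[
1+\tfrac{\gamma h}{4}+\tfrac{h^2}{4}\textstyle\sum_\ell\lambda_\ell^2>0 .
\]
It also contains the term $h\int_0^1\nabla U(x_n+\theta(\bar x-x_n))\,\rd\theta$ and some terms linear in $\bar x$. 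The $\mathcal C^n$ equation reads $(1-\tfrac{\gamma h}{4})\bar x = x_n+\tfrac{\gamma h}{4}x_n+\tfrac h2(\bar v+v_n)$. Since $h<h^*$ gives $\gamma h/4<1$, this can be solved for $\bar v$ affinely in $\bar x$. Plugging that into the $\bar v$-equation yields a scalar equation $F(\bar x)=0$ of the form
\[
F(\bar x) = a\,\bar x + \frac{h^2}{2}\int_0^1\nabla U(x_n+\theta(\bar x-x_n))\,\rd\theta - b(Y_n,h).
\]
Here $a = a(h,\gamma,\lambda)$ is an explicit constant, $a = 1+O(h)$ after normalisation, with negative contributions of the type $-\tfrac{\gamma h}{4}$, $-\tfrac{\gamma h^2}{8}\sum_\ell\lambda_\ell^2$, $-\tfrac{\gamma^2h^2}{16}$, and similar.

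\textbf{Step 2: monotonicity of $F$.} Write $g(y):=\int_0^1\nabla U(x_n+\theta(y-x_n))\,\rd\theta$; this equals $(U(y)-U(x_n))/(y-x_n)$ for $y\neq x_n$. By \eqref{nabla2U} and the mean value form, $g'(y)=\int_0^1\theta\,\nabla^2U(\cdot)\,\rd\theta\ge -K/2$. Hence
\[
F'(\bar x)\ \ge\ a - \tfrac{h^2K}{4}.
\]
The three constraints in \eqref{eq.def:h_star} are designed to keep this strictly positive. The conditions $h<4/(K+1)$ and $h<8/(2\gamma+2(K+1)+\gamma k)$ bound the $K$-term and the $\gamma$-linear terms. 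The condition $h<8/(\gamma\lambda_\ell^2)$ controls the cross terms $\tfrac{\gamma\lambda_\ell^2h^2}{8}$ coming from $\bar z^\ell$. I would do this bookkeeping, possibly without normalising and instead multiplying through by $(1-\gamma h/4)$. Then $F'\ge c(h)>0$, so $F$ is strictly increasing, which gives uniqueness.

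\textbf{Step 3: existence and the main obstacle.} Strict monotonicity with $F'\ge c>0$ forces $F(\bar x)\to\pm\infty$ as $\bar x\to\pm\infty$. Continuity of $F$ (from $U\in C^\infty$) and the intermediate value theorem then give a root $\bar x$. Back-substitution recovers $\bar v$ and $\bar z$ uniquely, and applying $\Phi^S$ gives $Y_{n+1}$. Induction in $n$ completes the proof.

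The main obstacle is the explicit algebra in Step 2: checking that the precise constant $h^*$ in \eqref{eq.def:h_star} makes the lower bound on $F'$ positive. I would first try a cruder route, bounding each negative term by a quarter of $1$ using one constraint at a time. If the exact combination does not close this way, the fallback is a 2D argument on $(\bar v,\bar x)$. There I would show that $G$ is strongly monotone, i.e. $\langle G(\bar Y)-G(\bar Y'),\bar Y-\bar Y'\rangle \ge c\|\bar Y-\bar Y'\|^2$ in a suitably weighted inner product, and invoke the Browder--Minty theorem.
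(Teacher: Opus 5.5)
Your main route is correct, and it is genuinely different from the paper's. The paper works with the full map $y\mapsto G(y,Y_n,h)$ on $\hR^{k+2}$. It asserts a Euclidean strong-monotonicity bound for $\langle \Delta y,G(y_1,Y_n,h)-G(y_2,Y_n,h)\rangle$, adds coercivity, and invokes Lemma~3.1 of Mao--Szpruch.

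That monotonicity bound is not justified. The $v$-row contributes $h\,\Theta\,\Delta v\,\Delta x$, where $\Theta:=\int_0^1\!\int_0^1\xi\,\nabla^2U(\cdot)\,\rd\theta\,\rd\xi$, and this term has no fixed sign. The paper bounds it below by $-\frac{hK}{2}|\Delta v||\Delta x|$ using only $\nabla^2U\ge-K$. Taking $\Delta z=0$ and $\Delta v=-\Delta x=t$, the inner product is exactly $(2+\frac h2-h\Theta)t^2$. For the double well $U(x)=\frac{x^4}{4}-\frac{x^2}{2}$ this is negative once $x_n$ and the $x$-components of $y_1,y_2$ lie where $\nabla^2U\gg1/h$. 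So $G$ is not monotone.

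Your elimination of the linear unknowns is exactly what avoids this problem. In the reduced equation, $\nabla^2U$ enters only in a diagonal position, where the one-sided bound $\nabla^2U\ge-K$ is all you need. Your route also gives more than the paper's:
\begin{itemize}
\item Your $F'(\bar x)=1+h^2\big(\frac14\sum_\ell\lambda_\ell^2-\frac{\gamma^2}{16}+\frac12F_1\big)$ equals $\det(\partial G/\partial\bar Y_{T_n,t_{n+1}})$. (The paper's display has $F_1$ where $\frac12F_1$ belongs.) So the same computation shows this Jacobian never vanishes, which is what the implicit-function argument for $\phi_h$ needs.
\item In the $\hR^d$-valued setting, the reduced map has symmetric Jacobian $\succeq(a-\frac{h^2K}{4})I$, so it stays strongly monotone there too.
\end{itemize}

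The algebra you flagged as the main obstacle closes easily. Multiply by $h/2$, which matches your $\frac{h^2}{2}$ prefactor. Then $a=1+h^2\big(\frac14\sum_\ell\lambda_\ell^2-\frac{\gamma^2}{16}\big)$ exactly, because the $\pm\frac{\gamma h}{4}$ and $\pm\frac{\gamma h^3}{16}\sum_\ell\lambda_\ell^2$ contributions cancel. Of the negative terms you anticipated, only $-\frac{\gamma^2h^2}{16}$ survives.

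Hence $F'\ge1-h^2(\frac{\gamma^2}{16}+\frac K4)$, and this is positive using only the last entry of $h^*$. That entry gives $\frac{\gamma h}{4}+\frac{(K+1)h}{4}<1$. Combined with $4K\le(K+1)^2$, this yields $\frac{\gamma^2h^2}{16}+\frac{Kh^2}{4}\le(\frac{\gamma h}{4})^2+(\frac{(K+1)h}{4})^2<1$. A small point: solving the $\mathcal C^n$-equation for $\bar v$ uses only $h/2\neq0$, not $\gamma h/4<1$.

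Drop the fallback, though. Strong monotonicity in a fixed weighted inner product fails for the same reason as the paper's estimate. The matrix $\partial G/\partial\bar Y_{T_n,t_{n+1}}$ has the off-diagonal entry $hF_1$ in the $(v,x)$ position, with $F_1$ unbounded above. For any constant $W\succ0$, $We_v$ cannot be parallel to $e_x$, since $e_v^{\top}We_v>0=e_v^{\top}e_x$. So the symmetric part of $We_ve_x^{\top}$ is indefinite, and the symmetric part of $W\,\partial G/\partial\bar Y_{T_n,t_{n+1}}$ loses positivity as $F_1\to\infty$.
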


\begin{proof}
We define $\Delta y = (\Delta v, \Delta z^1, \cdots, \Delta z^k ,\Delta x)^\top = y_1 - y_2$, for $y_1,y_2 \in \hR^{k+2}$. By \eqref{nabla2U} and Young's inequality, 
\begin{align*}
&\quad\ \langle \Delta y, G(y_1, Y_{n}, h) - G(y_2, Y_{n}, h)\rangle \\
&\ge(1 + \frac{\gamma}{4} h)|\Delta v|^{2} + h \int_{0}^{1} \int_{0}^{1} - K \xi|\Delta v||\Delta x| \rd \theta \rd \xi + \sum_{\ell = 1}^{k}|\Delta z^{\ell}|^{2} - \frac{\gamma h}{4} \sum_{\ell = 1}^{k} \lambda_{\ell} |\Delta z^{\ell}||\Delta x| \\
&\quad + (1 - \frac{\gamma}{4} h)|\Delta x|^{2} - \frac{h}{2}|\Delta x||\Delta v| \\
&\ge(1 + \frac{\gamma}{4} h - \frac{K + 1}{4} h)|\Delta v|^{2} + \sum_{\ell = 1}^{k}(1 - \frac{\gamma h}{8} \lambda_{\ell}^{2})|\Delta z^{\ell}|^{2} + (1 - \frac{\gamma}{4} h - \frac{K + 1}{4} h - \frac{\gamma k}{8} h)|\Delta x|^{2}, 
\end{align*}
which indicates that $G(y, Y_{n}, h)$ is monotone with respect to $y$ for $h \in (0 , h^*)$. In particular, by taking $y_2 = 0$, it is readily verify that the mapping $y \mapsto G(y, Y_{n}, h)$ is coercive in the sense that
\begin{align*}
\lim_{\|y\| \to \infty} \frac{\langle y, G(y, Y_{n}, h)\rangle}{\|y\|} = \infty. 
\end{align*}
By the monotonicity and coercivity of the mapping $y \mapsto G(y, Y_{n}, h)$, \cite[Lemma 3.1]{MaoSzpruchLukasz2013} ensures that, for any $h \in (0, h^{*})$, there exists a unique $y \in \hR^{k+2}$ such that $G(y, Y_{n}, h) = 0$. Thus, \eqref{avfmethod} is uniquely solvable, i.e., $G(y, Y_{n}, h) = 0$ determines a function $y = \phi_h(Y_n)$ such that for $h \in (0, h^{*})$,
\begin{align} \label{demapphi}
\bar{Y}_{T_{n}, t_{n + 1}} = \phi_h(Y_{n})=(\phi_{v}(Y_{n}),\phi_{z^1}(Y_{n}),\ldots,\phi_{z^k}(Y_{n}),\phi_{x}(Y_{n}))^\top.
\end{align}
The proof is completed. 
\end{proof}

\begin{remark}
\label{phi:bijection_smooth}
From the above proof, combined with \cite[Lemma 3.1]{MaoSzpruchLukasz2013}, we can conclude that $\phi_h$ is a bijection. Furthermore, by the implicit function theorem, the assumption $U \in C_{p}^{\infty}(\hR)$ implies that $\phi_h$ is smooth.
\end{remark}

\section{First-order strong convergence}
\label{sec.StrongConvergence}

In this section, we provide the exponential integrability of the splitting AVF method \eqref{numsolu}--\eqref{sdesolu} and, based on this property, establish its optimal strong convergence rate.

\subsection{Exponential integrability of the numerical approximation}
In this subsection, we show the exponential integrability of the splitting AVF method \eqref{numsolu}--\eqref{sdesolu}. Exponential integrability plays a key role in the optimal strong convergence order analysis of numerical methods under non-globally monotone conditions \cite{ChenDangHongZhang2025, CuiHongSheng2022, daiJiangWang2025, DaiWang2025, HutzenthalerJentzenWang2018}.

\begin{proposition}[Exponential integrability] \label{Numerical:expo} 
Assume that Assumption \ref{ass.1} holds, and let $C_{0} \ge C_{3} + 1$ be fixed, where $C_{3}$ is determined by \eqref{superu}. Define $\widetilde{H} = K_{0}(H + C_{0})$ with $K_{0} > 0$ and $H$ is given by \eqref{newHamil}. Then there exists $C > 0$ such that 
\begin{align*}
\sup_{n \leq T/h} \hE \left[ \exp \left(\frac{\widetilde{H}(Y_{n})}{e^{\eta t_{n}}} \right) \right] \leq e^{C + \widetilde{H}(Y(0))}, 
\end{align*}
where $\eta \ge 2 (K_{0}\overline{\alpha}_{\gamma} - \frac{\underline{\alpha}_\gamma}{2}) \vee 0$ with $\overline{\alpha}_{\gamma} := \gamma \vee \alpha_{1} \vee \cdots \vee \alpha_{k}$ and $\underline{\alpha}_{\gamma} := \gamma \wedge \frac{\gamma C_4}{4 C_2} \wedge \alpha_{1} \wedge \cdots \wedge \alpha_{k}$.
\end{proposition}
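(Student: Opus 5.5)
The plan is to exploit the splitting structure. The AVF step $\Phi^D$ preserves $H$ exactly, so all growth of $\widetilde H$ comes from the exact flow of the linear stochastic subsystem $\Psi^S$. For the latter we use an exponential-integrability lemma of the type \cite[Lemma 3.2]{CuiHongSheng2022} on each subinterval $T_n$. We then iterate over $n$ with the time-weighted factor $e^{-\eta t}$.

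\textbf{Step 1 (energy conservation of the AVF step).} From \eqref{avfmethod} we show $H(\bar Y_{T_n,t_{n+1}})=H(Y_n)$. Write $\Psi^D(y)=J\nabla H(y)$ with $J$ skew-symmetric. Indeed $\nabla H=(v+\frac{\gamma}{2}x,\ z,\ \nabla U(x)+\frac{\gamma}{2}v)$, and one checks that $\Psi^D$ is obtained by the skew matrix pairing $v$ with $x$ (entry $\mp1$) and $v$ with $z_\ell$ (entries $\pm\lambda_\ell$). The AVF method is $\bar Y-Y_n=hJ\int_0^1\nabla H(Y_n+\theta(\bar Y-Y_n))\rd\theta$. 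Taking the inner product with the averaged gradient and using skew-symmetry gives $H(\bar Y)-H(Y_n)=0$. We must verify that the midpoint terms in $\mathcal A^n,\mathcal B^n_\ell,\mathcal C^n$ are exactly the averaged gradients of the quadratic parts; this is routine. Hence $\widetilde H(\bar Y_{T_n,t_{n+1}})=\widetilde H(Y_n)$.

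\textbf{Step 2 (one-step exponential estimate for the stochastic flow).} On $[t_n,t_{n+1}]$ the process $\tilde Y_{T_n,t}$ solves the linear SDE $\Psi^S$ with initial value $\bar Y$. Apply Itô's formula to $\widetilde H(\tilde Y_t)e^{-\eta t}$. We need
\[
L\widetilde H+\tfrac12|\sigma^\top\nabla\widetilde H|^2 e^{-\eta t}-\eta\widetilde H\le \bar{\mathcal V},
\qquad \bar{\mathcal V}=K_0\Big(\gamma+\sum_\ell\alpha_\ell\Big),
\]
where $L$ is the generator of $\Psi^S$ and $\sigma$ its diffusion matrix. The drift contribution is
\[
K_0\Big[-\tfrac{\gamma}{2}v(v+\tfrac{\gamma}{2}x)+\sum_\ell z_\ell(-\alpha_\ell z_\ell+\tfrac{\gamma}{2}\lambda_\ell x)-\tfrac{\gamma}{2}x(\nabla U(x)+\tfrac{\gamma}{2}v)\Big],
\]
the Itô correction is $K_0(\gamma+\sum\alpha_\ell)$, and the quadratic-variation term is at most $K_0^2(2\gamma|v+\frac{\gamma}{2}x|^2+\sum 2\alpha_\ell z_\ell^2)$.

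To control these, use \eqref{fisrtu} to obtain $-\frac{\gamma}{2}x\nabla U(x)\le -\frac{\gamma C_4}{2}|x|^{2m}+C$. Use \eqref{superu} to compare $|x|^{2m}$ with $U$ (this is where the factor $\frac{C_4}{4C_2}$ in $\underline\alpha_\gamma$ comes from). The cross terms $vx$ and $z_\ell x$ are absorbed by Young's inequality into $|v|^2$, $|z|^2$ and the superquadratic $|x|^{2m}$. This gives drift $\le -\underline\alpha_\gamma\widetilde H+C$, while the quadratic-variation term is $\le 2K_0\overline\alpha_\gamma\widetilde H+C$; here we use $|v+\frac{\gamma}{2}x|^2\lesssim H+C_0$, with $C_0\ge C_3+1$ keeping $\widetilde H$ positive. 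The choice $\eta\ge 2(K_0\overline\alpha_\gamma-\underline\alpha_\gamma/2)$ makes the bracket bounded by a constant. The supermartingale argument then yields
\[
\hE\big[e^{\widetilde H(Y_{n+1})e^{-\eta t_{n+1}}}\,\big|\,\mathcal F_{t_n}\big]\le e^{\widetilde H(\bar Y)e^{-\eta t_n}+Ch}.
\]

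\textbf{Step 3 (iteration).} Combine Steps 1 and 2 and use the tower property. We get $\hE[e^{\widetilde H(Y_{n+1})e^{-\eta t_{n+1}}}]\le e^{Ch}\,\hE[e^{\widetilde H(Y_n)e^{-\eta t_n}}]$. Iterating up to $n\le T/h$ gives $e^{CT+\widetilde H(Y(0))}$.

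\textbf{Main obstacle.} The hard part is Step 2. The cross terms $\frac{\gamma}{2}vx$ in $H$, and the extra terms $\frac{\gamma}{2}\lambda_\ell x$ in $\Psi^S$, are not sign-definite. So $-\underline\alpha_\gamma\widetilde H$ has to be extracted with care, in particular ensuring that $H+C_0$ dominates $|v|^2$, $|z|^2$ and $|x|^{2m}$ uniformly. We also need $\widetilde H\ge0$, so that $e^{-\eta t}$ monotonicity can be used for the quadratic-variation term. A positive lower bound on $K_0$ is not needed, but $\eta$ must absorb the $K_0^2$ term, which the stated lower bound on $\eta$ provides.
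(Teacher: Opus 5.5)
Your proposal is correct and follows essentially the same route as the paper: exact conservation of $H$ by the AVF substep, \cite[Lemma 3.2]{CuiHongSheng2022} applied to the linear stochastic subflow on each $T_n$, and iteration of the resulting one-step bound. Two slips, neither fatal: the correct estimate is $\|D\widetilde H\,\tilde\sigma\|^2\le 4K_0\overline{\alpha}_\gamma\widetilde H+C$, not $2K_0\overline{\alpha}_\gamma\widetilde H+C$ (test $x=0$, $z=0$ with $\overline{\alpha}_\gamma=\gamma$), and with $C_0=C_3+1$ the function $\widetilde H$ is only bounded below, not nonnegative, which is all the argument needs; since your full-rate drift bound $-\underline{\alpha}_\gamma\widetilde H+C$ is genuinely attainable (the $v$-terms of the drift plus $\underline{\alpha}_\gamma K_0 H$ reduce to $-\frac{\gamma-\underline{\alpha}_\gamma}{2}K_0(v^2+\gamma vx)\le CK_0x^2$), the stated threshold $\eta\ge 2K_0\overline{\alpha}_\gamma-\underline{\alpha}_\gamma$ still closes the argument, whereas the paper's intermediate rate $-\frac{\underline{\alpha}_\gamma}{2}\widetilde H$ combined with the corrected quadratic-variation bound would only give $\eta\ge 2K_0\overline{\alpha}_\gamma-\frac{\underline{\alpha}_\gamma}{2}$.
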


\begin{proof}
For $Y = (v, z^{\top}, x)^{\top}$, we denote $\tilde{\mu}(Y) 
= ( - \frac{\gamma}{2} v, \, - \alpha_{1} z_{1} + \frac{\gamma}{2} \lambda_{1} x, \, \cdots, \, - \alpha_{k} z_{k} + \frac{\gamma}{2} \lambda_{k} x, \, - \frac{\gamma}{2} x )^{\top}$ and $\tilde{\sigma} 
= \mbox{diag}( \sqrt{2 \gamma}, \, \sqrt{2 \alpha_{1}}, \, \cdots, \, \sqrt{2 \alpha_{k}}, \, 0 )$. Using \eqref{superu}, \eqref{fisrtu} and Young's inequality yields 
\begin{align*}
\quad\ D \widetilde{H}(Y) \tilde{\mu}(Y) 
&= K_{0}\left( - \frac{\gamma}{2} v^{2} - \frac{\gamma}{2} x \nabla U(x) - \frac{\gamma^{2}}{2} x v - \sum_{\ell = 1}^{k} \alpha_{\ell} z_{\ell}^{2} + \frac{\gamma}{2}\sum_{\ell = 1}^{k} \lambda_{\ell} z_{\ell} x \right) \\
&\leq K_{0}\left( - \frac{\gamma}{2} v^{2} - (\frac{\gamma}{2} C_{4} - \frac{\varepsilon}{2}) x^{2 m} - \frac{\gamma^{2}}{2} x v - \sum_{\ell = 1}^{k}(\alpha_{\ell} - \frac{\varepsilon_{\ell}}{2}) z_{\ell}^{2} + C \right) \\
&\leq K_{0}\left( - \frac{\gamma}{2} v^{2} - (\frac{\gamma}{2} C_{4} - \frac{\varepsilon}{2}) U(x)/C_2 - \frac{\gamma^{2}}{2} x v - \sum_{\ell = 1}^{k}(\alpha_{\ell} - \frac{\varepsilon_{\ell}}{2}) z_{\ell}^{2} + C \right) \\
&\leq K_0\left(-\frac{\gamma^2}{2}xv - \underline{\alpha}_\gamma H_0(Y) + C\right) \\
&\leq K_0\left(\frac{\gamma}{2}\left(\frac{\underline{\alpha}_\gamma}{2} + \gamma\right)\delta v^2 + \frac{\gamma}{2}\left(\frac{\underline{\alpha}_\gamma}{2} + \gamma\right)\frac{1}{4\delta }x^2 - \frac{\underline{\alpha}_\gamma}{2}H_0(Y) - \frac{\underline{\alpha}_\gamma}{2}H(Y) + C\right) \\
&\leq K_0\left(\frac{\gamma}{2}\left(\frac{\underline{\alpha}_\gamma}{2} + \gamma\right)\delta v^2 + \tilde{\delta }U(x) + C_{\tilde{\delta }} - \frac{\underline{\alpha}_\gamma}{2}H_0(Y) - \frac{\underline{\alpha}_\gamma}{2}H(Y) + C\right) \\
&\leq -\frac{\underline{\alpha}_\gamma}{2}\widetilde{H}(Y) + C, 
\end{align*}
where $\varepsilon_{\ell} := \alpha_{\ell}$, $\varepsilon := \frac{\gamma C_4}{2} $, $ \delta \leq \alpha_\gamma / (2 \gamma (\frac{\underline{\alpha}_\gamma}{2} + \gamma))$, and $\tilde{\delta } \leq \frac{\underline{\alpha}_\gamma}{2}$. On the other hand, we have $\operatorname{tr}(D^{2} \widetilde{H}(Y) \tilde{\sigma} \tilde{\sigma}^{\top}) = 2 K_0 \big(\gamma + \sum_{\ell = 1}^{k} \alpha_{\ell}\big)$ and 
\begin{align*}
\|D\widetilde{H}(Y)\tilde{\sigma}\|^{2} 
&= K_{0}^{2} \left| \sqrt{2 \gamma}(v + \frac{\gamma}{2} x) \right|^2 + K_{0}^{2} \sum_{\ell = 1}^{k} | \sqrt{2 \alpha_{\ell}} z_{\ell} |^{2} \\
&\leq K_{0}^{2} \left( 2 \gamma \Big( v^{2} + \gamma xv + 2 C_{1} x^{2 m} + \frac{m-1}{m \left(2 m C_1\right)^{\frac{1}{m-1}}}\left(\frac{\gamma}{4}\right)^{\frac{m}{m-1}} \Big) + 2 \sum_{\ell = 1}^{k} \alpha_{1} z_{\ell}^{2} \right) \\
&\leq K_{0}^{2} \left( 2 \gamma(v^{2} + \gamma xv + 2 U(x) + C) + 2 \sum_{\ell = 1}^{k} \alpha_{\ell} z_{\ell}^{2} \right) \\
&\leq 2 K_{0}\overline{\alpha}_{\gamma} \widetilde{H}(Y) + C. 
\end{align*}
Then, one can arrive at 
\begin{align*}
D \widetilde{H}(\tilde{Y}_{T_n, t}) \tilde{\mu}(\tilde{Y}_{T_n, t}) + \frac{\operatorname{tr}(D^{2} \widetilde{H}(\tilde{Y}_{T_n, t}) \tilde{\sigma} \tilde{\sigma}^{\top})}{2} + \frac{\|D \widetilde{H}(\tilde{Y}_{T_n, t})\tilde{\sigma} \|^{2}}{2 e^{\eta t}} 
\leq \widetilde{H}(\tilde{Y}_{T_n, t})(2 (K_{0} \overline{\alpha}_{\gamma} - \frac{\underline{\alpha}_{\gamma}}{2} )) + C, 
\end{align*}
which together with \cite[Lemma 3.2]{CuiHongSheng2022} yields
\begin{align*}
& \mathbb{E}\left[\exp \left(\frac{\widetilde{H}(\tilde{Y}_{T_n, t_{n+1}})}{e^{\eta t_{n+1}}}\right)\right] \leq \mathbb{E}\left[\exp \left(\frac{\widetilde{H}(\tilde{Y}_{T_n, t_n})}{e^{\eta t_n}}\right)\right] \exp \left[C\left(e^{-\eta t_n}-e^{-\eta t_{n+1}}\right)\right] .
\end{align*}
In view of $\widetilde{H}(\tilde{Y}_{T_n, t_n})=\widetilde{H}(\bar{Y}_{T_n, t_{n+1}})=\widetilde{H}(Y_n)$ and $\widetilde{H}(\tilde{Y}_{T_n, t_{n+1}})=\widetilde{H}(Y_{n+1})$, it follows that
\begin{align*}
\mathbb{E}\left[\exp \left(\frac{\widetilde{H}(Y_{n+1})}{e^{\eta t_{n+1}}}\right)\right] \leq \mathbb{E}\left[\exp \left(\frac{\widetilde{H}(Y_n)}{e^{\eta t_n}}\right)\right] \exp \left[C\left(e^{-\eta t_n}-e^{-\eta t_{n+1}}\right)\right] .
\end{align*}
Accordingly,
\begin{align*}
\sup _{n \leq T/h} \mathbb{E}\left[\exp \left(\frac{\widetilde{H}(Y_n)}{e^{\eta t_n}}\right)\right] &\leq \prod_{i=0}^{T/h-1} \exp \left[C\left(e^{-\eta t_i}-e^{-\eta t_{i+1}}\right)\right] e^{\widetilde{H}(Y(0))} \\
&\leq e^{C + \widetilde{H}(Y(0))},
\end{align*}
which completes the proof.
\end{proof}

In view of Proposition \ref{Numerical:expo}, the following moment boundedness result for the numerical solutions $\{\bar{Y}_{T_{n}, t_{n + 1}}\}$ and $\{Y_{n}\}$ follows from an application of It\^o's formula together with the Burkholder--Davis--Gundy (BDG) inequality.

\begin{corollary} \label{numerical:momentbound}
If Assumption \ref{ass.1} holds, then there exists a positive constant $C$, independent of $h$, such that for any $p \ge 1$,
\begin{align*}
 \hE \left[ \sup_{n \leq T/h} \left| \widetilde{H}(\bar{Y}_{T_{n}, t_{n + 1}}) \right|^{p} \right] 
 + \hE \left[ \sup_{n \leq T/h} \left| \widetilde{H}(Y_{n}) \right|^{p} \right] 
\leq C. 
\end{align*}
\end{corollary}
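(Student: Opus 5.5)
The plan rests on two facts. First, the AVF step exactly preserves $H$: $\widetilde{H}(\bar{Y}_{T_n,t_{n+1}})=\widetilde{H}(Y_n)$. This is already used in the proof of Proposition \ref{Numerical:expo}. So the first term in the statement is bounded by the second, and it suffices to bound $\hE[\sup_{n\le T/h}|\widetilde{H}(Y_n)|^p]$.

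Second, $\widetilde{H}$ is bounded below by a positive constant for a suitable $C_0$. Indeed, $\frac{\gamma}{2}|vx|\le \frac14 v^2+\frac{\gamma^2}{4}x^2$, and \eqref{superu} absorbs $x^2$ into $\frac12 U(x)$ up to a constant. Hence $\widetilde{H}\ge c(v^2+\|z\|^2+|x|^{2m})-C$, so we may drop absolute values at the cost of a constant.

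For the supremum, I work with the continuous version \eqref{continuousversion} on each $T_n$. On the stochastic substep, Itô's formula for $\widetilde{H}(\tilde{Y}_{T_n,t})$ gives
\begin{align*}
\widetilde{H}(Y_{n+1})=\widetilde{H}(Y_n)+\int_{t_n}^{t_{n+1}}\Big(D\widetilde{H}\,\tilde{\mu}+\tfrac12\operatorname{tr}(D^2\widetilde{H}\tilde{\sigma}\tilde{\sigma}^\top)\Big)(\tilde{Y}_{T_n,s})\,\rd s+\int_{t_n}^{t_{n+1}}D\widetilde{H}(\tilde{Y}_{T_n,s})\tilde{\sigma}\,\rd W(s).
\end{align*}
The proof of Proposition \ref{Numerical:expo} shows the drift term is at most $C$. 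Summing over $n$ gives
\begin{align*}
\widetilde{H}(Y_n)\le \widetilde{H}(Y(0))+CT+M_{t_n},
\end{align*}
where $M_t=\sum_n\int_{T_n\cap[0,t]} D\widetilde{H}(\tilde{Y}_{T_n,s})\tilde{\sigma}\,\rd W(s)$ is a continuous martingale. Raising to the power $p$, taking the supremum and applying the BDG inequality gives
\begin{align*}
\hE\sup_n|\widetilde{H}(Y_n)|^p\le C+C\,\hE\Big(\int_0^T\|D\widetilde{H}\tilde{\sigma}\|^2\,\rd s\Big)^{p/2}.
\end{align*}
By the same section's estimate $\|D\widetilde{H}\tilde{\sigma}\|^2\le C\widetilde{H}+C$, the right-hand side is bounded by $C+C\int_0^T\hE|\widetilde{H}(\tilde{Y}_{T_{\lfloor s\rfloor},s})|^{p/2}\,\rd s$.

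Here there are two options. One is to close the estimate with Young's inequality, $C\,\hE(\sup\widetilde{H})^{p/2}\le \frac12\hE\sup\widetilde{H}^p+C$. This needs some a priori finiteness, so I would localize with a stopping time or observe finiteness for fixed $h$. The cleaner option is to bound $\sup_{s}\hE|\widetilde{H}(\tilde{Y}_{T_{\lfloor s\rfloor},s})|^{q}$ directly from exponential integrability. The argument in Proposition \ref{Numerical:expo} gives $\hE\exp(\widetilde{H}(\tilde{Y}_{T_n,t})e^{-\eta t})\le e^{C+\widetilde{H}(Y(0))}$ uniformly for intermediate $t$, not only grid times, because \cite[Lemma 3.2]{CuiHongSheng2022} applies on each subinterval. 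Since $x^q\le C_q e^{x e^{-\eta T}}$, all polynomial moments are bounded uniformly in $h$ and $t$.

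The main obstacle is exactly this uniform-in-$t$ moment bound at intermediate times. Once it is in hand, BDG closes the argument for every $p\ge1$, since larger $p$ implies smaller $p$ by Jensen's inequality, and $C$ is independent of $h$.
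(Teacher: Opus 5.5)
Your proposal is correct and follows the route the paper indicates. Exact conservation of $H$ by the AVF substep reduces everything to $\widetilde{H}(Y_n)$; It\^o's formula on the stochastic substeps gives a drift bounded above by a constant; and the BDG inequality controls the supremum of the martingale part, with the needed moments of $\widetilde{H}$ at intermediate times supplied by the exponential integrability of Proposition~\ref{Numerical:expo} (which indeed extends to all $t\in T_n$). One small slip: you apply It\^o's formula to the stochastic subflow $\tilde{Y}_{T_n,t}$ of \eqref{sdesolu} started at $\bar{Y}_{T_n,t_{n+1}}$, not to the interpolation $\tilde{Y}_t$ of \eqref{continuousversion}, along which $H$ is not conserved and the drift contains the $h^{-1}\mathcal{A}^n$ terms.
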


\subsection{Strong error analysis}

In this subsection, we establish the first-order strong convergence for the splitting AVF method \eqref{numsolu}--\eqref{sdesolu} applied to the GLE \eqref{GLE}. The optimal strong error analysis of numerical methods for the classical Langevin equation with superquadratically growing potentials is typically carried out in two steps; see, e.g., \cite{CuiHongSheng2022,daiJiangWang2025}. In the first step, a preliminary convergence rate of order $\frac{1}{2}$ is obtained by exploiting the exponential integrability of both the exact and numerical solutions. In the second step, this preliminary rate is refined to attain the desired result. Here, we adopt the same two-step strategy to establish the optimal first-order strong convergence.

\begin{lemma} \label{1/2strongconver}
Let $h^{*}$ be defined by \eqref{eq.def:h_star}. If Assumption \ref{ass.1} holds, then there exists $C := C(T)>0$ and $\tilde{h} \in(0, h^{*})$ such that for any $h \in(0, \tilde{h})$ and $p\ge1$, 
\begin{align*}
\sup_{n \leq T/h}\|Y(t_{n}) - Y_{n}\|_{L^{2p}(\Omega, \hR^{k+2})} \leq C h^{1/2}. 
\end{align*}
\end{lemma}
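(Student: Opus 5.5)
We compare the exact solution with the time-continuous interpolation \eqref{continuousversion}. On each interval $T_n$ the interpolation satisfies an SDE with drift $h^{-1}(\mathcal{A}^n,\mathcal{B}^n,\mathcal{C}^n)$ plus the linear dissipative part $\tilde{\mu}$, and with the same additive noise $\tilde{\sigma}\,\rd W$ as the exact equation. Set $e(t) := Y(t) - \tilde{Y}_t$ for $t \in T_n$, with $\tilde{Y}_{t_n} = Y_n$ and $\tilde Y_{t_{n+1}^-}=Y_{n+1}$. Because the noise is additive and identical in both equations, the stochastic integrals cancel. Hence $e$ is absolutely continuous, and $\frac{\rd}{\rd t}e(t)$ is the difference of the drifts.

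The key step is to rewrite the drift difference as a locally monotone part plus a remainder. The main part is
\begin{align*}
\Psi^D(Y(t)) + \tilde\mu(Y(t)) - \Psi^D(\tilde Y_t) - \tilde\mu(\tilde Y_t).
\end{align*}
The remainder $R_n(t)$ measures the gap between $h^{-1}(\mathcal{A}^n,\mathcal{B}^n,\mathcal{C}^n)$ and $\Psi^D(\tilde Y_t)$.

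\begin{itemize}
\item \emph{Main part.} All components are linear except $\nabla U$. Using \eqref{deu}, the term $-\langle \nabla U(x)-\nabla U(\tilde x), e_v\rangle$ is bounded by $C(1+|x|^{2m-\epsilon}+|\tilde x|^{2m-\epsilon})\|e\|^2$. This yields
\begin{align*}
\frac{\rd}{\rd t}\|e\|^2 \le C\bigl(1+|x(t)|^{2m-\epsilon}+|\tilde x_t|^{2m-\epsilon}\bigr)\|e\|^2 + 2\langle e,R_n(t)\rangle.
\end{align*}
\item \emph{Remainder.} By the AVF definition, $R_n(t)$ is controlled by $\|\bar Y_{T_n,t_{n+1}}-Y_n\| + \|\tilde Y_t - Y_n\|$, weighted by polynomial factors coming from \eqref{onedu} and \eqref{deu}. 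The increment $\bar Y - Y_n$ is $O(h)$ times a polynomial in $Y_n$. The increment $\tilde Y_t - Y_n$ contains the Brownian increment, so it is $O(h^{1/2})$ in every $L^p$ with polynomial weights. These weights are bounded using Corollary \ref{numerical:momentbound} together with \eqref{superu}, which controls $|x|^{2m}$ by $H$. Hence $\|R_n(t)\|_{L^p}\le Ch^{1/2}$ for all $p$.
\end{itemize}

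Next apply Young's inequality, $2\langle e,R\rangle\le \|e\|^2+\|R\|^2$, and then Gronwall pathwise:
\begin{align*}
\|e(t)\|^2\le \int_0^t \exp\Bigl(\int_s^t C\bigl(1+|x(r)|^{2m-\epsilon}+|\tilde x_r|^{2m-\epsilon}\bigr)\rd r\Bigr)\|R(s)\|^2\rd s.
\end{align*}
Take $L^p$ norms and split with Hölder. The factor $\|R\|^2$ contributes $h$, so it remains to show the exponential factor is bounded in $L^q$. This is the main obstacle, and it is where $\epsilon>0$ is essential.

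\begin{itemize}
\item Young's inequality gives $q C|x|^{2m-\epsilon}\le \delta |x|^{2m} + C_\delta$ for any $\delta>0$.
\item By Jensen in time, $\hE\exp(\int_0^T \delta' |x(r)|^{2m}\rd r)\le \frac1T\int_0^T\hE \exp(T\delta'|x(r)|^{2m})\rd r$.
\item Proposition \ref{expo} bounds this uniformly once $T\delta' \le C_1K_0e^{-\eta T}$, using \eqref{superu}.
\item For the numerical process, apply Proposition \ref{Numerical:expo} to $\tilde Y$ at grid points. This works because $H$ dominates $|x|^{2m}$ up to constants: the cross term $\frac{\gamma}{2}vx$ is absorbed since $m\ge2$. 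Within each step, Proposition \ref{Numerical:expo} is applied at intermediate times along the stochastic flow, as its proof via \cite[Lemma 3.2]{CuiHongSheng2022} already covers them.
\item The product of the exact and numerical exponentials is separated by Cauchy–Schwarz.
\end{itemize}

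The restriction $h<\tilde h$ guarantees solvability (Proposition \ref{solvability}) and that the constants in Proposition \ref{Numerical:expo} hold uniformly. Since $e(t_n)=Y(t_n)-Y_n$, combining these bounds yields $\sup_n\|Y(t_n)-Y_n\|_{L^{2p}}\le Ch^{1/2}$.
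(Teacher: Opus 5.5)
Your argument is correct in substance, but it follows a different route from the paper. For this lemma the paper does not interpolate. It writes variation-of-constants identities \eqref{vvgle}--\eqref{xxgle} for the grid errors and bounds each one-step defect by three pieces: a locally Lipschitz term with weight $\int_{t_n}^{t_{n+1}}(1+|x(t)|^{2m-\epsilon}+|x_n|^{2m-\epsilon}+|\bar x_{T_n,t_{n+1}}|^{2m-\epsilon})\,\rd t$, a truncation term $Ch^2\Upsilon_n$, and explicit Gaussian remainders $M_n$ of size $O(h^{3/2})$. It then closes with a discrete Gr\"onwall inequality and exponential integrability. The half order is lost by summing the $|M_j|$ without using their martingale structure.

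You instead exploit that \eqref{continuousversion} carries exactly the additive noise of the exact equation. The error is therefore pathwise absolutely continuous on each step, and a single energy inequality suffices. Here the half order is lost in the Young step $2\langle e,R_n\rangle\le\|e\|^2+\|R_n\|^2$, which ignores the mean-zero Brownian part of $R_n$.

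What each route buys:
\begin{itemize}
\item Your route gives the continuous-time bound of Corollary \ref{coro:half_convergence_rate} at once. It is also precisely the framework of the paper's proof of Theorem \ref{1strongconver}, where that Young step is replaced by an It\^o expansion of $\hat\mu(\tilde Y_s)-\hat\mu(\tilde Y_{t_n})$ and a martingale estimate.
\item The paper's grid recursion avoids any interpolant, and it is exactly what gets Malliavin-differentiated in Theorem \ref{pdfconver}.
\item You make explicit a point the paper passes over: $\epsilon>0$ lets you absorb $qC|x|^{2m-\epsilon}$ into $\delta|x|^{2m}+C_\delta$, so exponential moments of every order are available.
\end{itemize}

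Two details need repair.

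\textbf{The interpolant does not land on $Y_{n+1}$.} Solving \eqref{continuousversion} gives, for instance, $\tilde v_{t_{n+1}^-}-v_{n+1}=\bigl(\tfrac{2}{\gamma h}(1-e^{-\gamma h/2})-e^{-\gamma h/2}\bigr)\mathcal{A}^n$, and similarly for the $z$- and $x$-components. So $e$ jumps by $O(h^2)$, in every $L^p$, at each grid point. This is harmless: the estimate $\|e(t_{n+1})\|^2\le(1+h)\|e(t_{n+1}^-)\|^2+(1+h^{-1})\|Y_{n+1}-\tilde Y_{t_{n+1}^-}\|^2$ costs only a factor $e^{T}$ and an additive $O(h^2)$. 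But the jumps must be accounted for rather than asserted away.

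\textbf{Between-grid control of $|\tilde x_t|^{2m-\epsilon}$.} This does not come from the flow in the proof of Proposition \ref{Numerical:expo}. That flow is \eqref{sdesolu}, started at $\bar Y_{T_n,t_{n+1}}$, not your interpolant. Instead, write $\tilde x_t=(e^{-\gamma(t-t_n)/2}-c_t)x_n+c_t\bar x_{T_n,t_{n+1}}$ with $c_t=\tfrac{2}{\gamma h}(1-e^{-\gamma(t-t_n)/2})\in[0,1]$. Then $|\tilde x_t|\le|x_n|+|\bar x_{T_n,t_{n+1}}|$, and $H(\bar Y_{T_n,t_{n+1}})=H(Y_n)$ reduces everything to grid-point exponential integrability.
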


\begin{proof}
Denote $e_v(t_{n}) := v_n - v(t_{n})$, $e_{z^{\ell}}(t_{n}) := z_{n}^{\ell} - z_{\ell}(t_{n}) \, (\ell = 1, \cdots, k)$, and $e_x(t_{n}) := x_n - x(t_{n})$. It follows from \eqref{GLE} that for any $0 \leq s<t \leq T$, 
\begin{align*} 
\begin{cases}
v(t) = e^{-\frac{\gamma}{2}(t - s)} v(s) - \int_{s}^{t} e^{-\frac{\gamma}{2}(t - u)}(\frac{\gamma}{2} v(u) + \nabla U(x(u)) - \sum_{\ell = 1}^{k} \lambda_{\ell} z_{\ell}(u)) \rd u + \int_{s}^{t} e^{-\frac{\gamma}{2}(t - u)} \sqrt{2 \gamma} \rd W_{0}(u), \\
z_{\ell}(t) = e^{-\alpha_{\ell} (t - s)} z_{\ell}(s) - \int_{s}^{t} e^{-\alpha_{\ell} (t - u)} \lambda_{\ell} v(u) \rd u + \int_{s}^{t} e^{-\alpha_{\ell}(t - u)} \sqrt{2 \alpha_{\ell}} \rd W_{\ell}(u), \qquad \ell = 1, 2, \cdots, k, \\
x(t) = e^{-\frac{\gamma}{2}(t - s)} x(s) + \int_{s}^{t} e^{-\frac{\gamma}{2}(t - u)}(\frac{\gamma}{2} x(u) + v(u)) \rd u, 
\end{cases}
\end{align*}
which together with \eqref{avfmethod} and \eqref{sdesolu} indicates 
\begin{align}
e_v(t_{n + 1}) 
&= e^{-\frac{\gamma}{2} h}e_v(t_{n}) + \int_{t_n}^{t_{n + 1}}( - e^{\frac{\gamma}{2} h} + e^{-\frac{\gamma}{2}(t_{n + 1} - t)}) \big(\frac{\gamma}{2} v(t) + \nabla U(x(t)) - \sum_{\ell = 1}^{k} \lambda_{\ell} z_{\ell}(t)\big) \rd t \notag \\
&\quad + e^{-\frac{\gamma}{2} h} \int_{t_{n}}^{t_{n + 1}} \big( \frac{\gamma}{2} v(t) - \frac{\gamma}{4}(\bar{v}_{T_n, t_{n+1}} + v_n) - \sum_{\ell = 1}^{k} \lambda_{\ell} (z_{\ell}(t) - \frac{1}{2}(\bar{z}_{T_n, t_{n+1}}^{\ell} + z_n^\ell)) \notag \\
&\quad + \nabla U(x(t)) - \int_0^1 \nabla U (x_n + \xi (\bar{x}_{T_n, t_{n+1}} - x_n)) \rd \xi \big) \rd t, \label{vvgle} \\
e_{z_{\ell}}(t_{n + 1}) 
&= e^{-\alpha_{\ell} h}e_{z_{\ell}}(t_n) + \int_{t_n}^{t_{n + 1}}( - e^{-\alpha_{\ell} h} + e^{-\alpha_{\ell}(t_{n + 1} - t)})\big(\lambda_{\ell}v (t) + \frac{\gamma}{2} \lambda_{\ell} x(t) \big) \rd t \notag \\
&\quad + e^{-\alpha_{\ell} h} \int_{t_n}^{t_{n + 1}}\big(\lambda_{\ell} v(t) - \frac{\lambda_{\ell}}{2}(\bar{v}_{T_n, t_{n+1}} + v_n) + \frac{\gamma \lambda_{\ell}}{2} x(t) - \frac{\gamma \lambda_{\ell}}{4}(\bar{x}_{T_n, t_{n+1}} + x_n)\big) \rd t \notag \\
&\quad + \int_{t_n}^{t_{n + 1}} e^{-\alpha_{\ell}(t_{n + 1} - t)} \frac{\gamma}{2} \lambda_{\ell}(\tilde{x}_{T_n, t} - x(t)) \rd t, \label{zzgle} \\
e_x(t_{n + 1}) 
&= e^{-\frac{\gamma}{2} h}e_x(t_{n}) - \int_{t_n}^{t_{n + 1}}( - e^{-\frac{\gamma}{2} h} + e^{-\frac{\gamma}{2}(t_{n + 1} - t)})\big(\frac{\gamma}{2} x(t) + v(t)\big) \rd t \notag \\
&\quad - e^{-\frac{\gamma}{2} h} \int_{t_n}^{t_{n + 1}}\big(\frac{\gamma}{2} x(t) - \frac{\gamma}{4}(\bar{x}_{T_n, t_{n+1}} + x_n) + v(t) - \frac{1}{2}(\bar{v}_{T_n, t_{n+1}} + v_n)\big) \rd t. \label{xxgle}
\end{align}
Using \eqref{GLE}, \eqref{avfmethod} and \eqref{continuousversion} shows 
\begin{align}
&v(t)-\frac{1}{2}(\bar{v}_{T_{n}, t_{n+1}}+v_{n}) = e_{v}(t_{n})- \int_{t_{n}}^{t}(\gamma v(s)+\nabla U(x(s))- \sum_{\ell = 1}^{k} \lambda_{\ell} z_{\ell}(s)) \rd s -\frac{1}{2} \mathcal{A}^{n}+ \int_{t_{n}}^{t} \sqrt{2 \gamma} \rd W_{0}(s), \label{vbarv} \\
&z_{\ell}(t)-\frac{1}{2}(\bar{z}_{T_{n}, t_{n+1}}^\ell+z_{n}^\ell) = e_{z_{\ell}}(t_{n})- \int_{t_{n}}^{t}(\alpha_{\ell} z_{\ell}(s)+\lambda_{\ell} v(s)) \rd s-\frac{1}{2} \mathcal{B}_{\ell}^{n} + \int_{t_{n}}^{t} \sqrt{2 \alpha_{\ell}} \rd W_{\ell}(s), \label{zbarz} \\
& x(t)-\frac{1}{2}(\bar{x}_{T_{n}, t_{n+1}}+x_{n}) = e_{x}(t_{n})- \int_{t_{n}}^{t} v(s) \rd s-\frac{1}{2} \mathcal{C}^{n}, \label{xbarx} \\
& \tilde{x}_{T_{n}, t}-x(t) = x_{n}-x(t_{n})- \int_{t_{n}}^{t}(\frac{\gamma}{2} x(s)+v(s)) \rd s+ \mathcal{C}^{n}+ \int_{t_{n}}^{t} \frac{\gamma}{2}(x(s) -\tilde{x}_{T_{n}, s}) \rd s. \label{xtidlex}
\end{align}
Applying Gr\"onwall's inequality implies 
\begin{align} \label{du_du}
|\tilde{x}_{T_{n}, t}-x(t)| \leq C(|e_{x}(t_{n})|+ \int_{t_{n}}^{t}|\frac{\gamma}{2} x(s)+v(s)| \rd s+|\mathcal{C}^{n}|) .
\end{align}
By the mean value theorem, one gets 
\begin{align*}
&\ \nabla U(x(t))- \int_{0}^{1} \nabla U(x_{n}+\xi(\bar{x}_{T_{n}, t_{n+1}}-x_{n})) \rd \xi \\
&= \int_{0}^{1} \int_{0}^{1} \nabla^{2} U(\theta x(t)+(1-\theta)(x_{n}+\xi(\bar{x}_{T_{n}, t_{n+1}}-x_{n}))(x(t)-x_{n}-\xi(\bar{x}_{T_{n}, t_{n+1}}-x_{n})) \rd \theta \rd \xi \\
&= \int_{0}^{1} \int_{0}^{1} \nabla^{2} U(\theta x(t)+(1-\theta)(x_{n}+\xi \bar{x}_{T_{n}, t_{n+1}}-x_{n}))(x(t_{n})-x_{n}) \rd \theta \rd \xi \\
&\quad + \int_{0}^{1} \int_{0}^{1} \nabla^{2} U(\theta x(t)+(1-\theta)(x_{n}+\xi(\bar{x}_{T_{n}, t_{n+1}}-x_{n}))( \int_{t_{n}}^{t} v(s) \rd s-\xi \mathcal{C}^{n}) \rd \theta \rd \xi. 
\end{align*}
By combining \eqref{secondu}, \eqref{deu}, \eqref{vvgle}--\eqref{du_du}, Corollary \ref{numerical:momentbound}, and Young's inequality, one can obtain 
\begin{align*}
|e_{v}(t_{n+1})| 
&\leq |e_{v}(t_{n})| + C|e_{x}(t_{n})| \int_{t_{n}}^{t_{n+1}}(1+|x(t)|^{2 m-\epsilon}+|x_{n}|^{2 m-\epsilon}+ |\bar{x}_{T_{n}, t_{n+1}} |^{2m-\epsilon}) \rd t \\
&\quad + C h(|e_{v}(t_{n})|+|e_{z_{\ell}}(t_{n})|) +C h^{2} \Upsilon_{n}+e^{-\frac{\gamma}{2} h} \frac{\gamma}{2} \Big| \int_{t_{n}}^{t_{n+1}} \int_{t_{n}}^{t} \sqrt{2 \gamma} \rd W_{0}(s) \rd t\Big| \\
&\quad + e^{-\frac{\gamma}{2} h} \sum_{\ell = 1}^{n} \lambda_{\ell} \Big| \int_{t_{n}}^{t_{n+1}} \int_{t_{n}}^{t} \sqrt{2 \alpha_{\ell}} \rd W_{\ell}(s) \rd t\Big|, \\
|e_{z_{\ell}}(t_{n+1})| 
&\leq |e_{z_{\ell}}(t_{n})|+C h(|e_{v}(t_{n})|+|e_{x}(t_{n})|)+C h^{2} \Upsilon_{n} + e^{-\alpha_{\ell} h} \lambda_{\ell} \Big| \int_{t_{n}}^{t_{n+1}} \int_{t_{n}}^{t} \sqrt{2 \gamma} \rd W_{0}(s) \rd t\Big|, \\
|e_{x}(t_{n+1})|
&\leq |e_{x}(t_{n})|+C h(|e_{v}(t_{n})|+|e_{x}(t_{n})|)+C h^{2} \Upsilon_{n} + e^{-\frac{\gamma}{2} h} \Big| \int_{t_{n}}^{t_{n+1}} \int_{t_{n}}^{t} \sqrt{2 \gamma} \rd W_{0}(s) \rd t\Big|, 
\end{align*}
where $\Upsilon_{n} = 1+\sup_{t \in[0, T]}|v(t)|^{2}+|\bar{v}_{T_{n}, t_{n}+1}|^{2}+|v_{n}|^{2}+ \sum_{\ell = 1}^{k}(\sup_{t \in[0, T]}| z_{\ell}(t)|+|\bar{z}_{T_{n}, t_{n+1}}^{\ell}|+|z_{n}^{\ell}|) + \sup_{t \in[0, T]}|x(t)|^{2 m} + |\bar{x}_{T_{n}, t_{n+1}}|^{2 m}+|x_{n}|^{2 m}$. Denote $\mathcal{E}(t_{n}):= |e_{v}(t_{n})|+ \sum_{\ell = 1}^{n}|e_{z_{\ell}}(t_{n})|+|e_{x}(t_{n})|$. It holds that 
\begin{align*}
\mathcal{E}(t_{n+1}) \leq \mathcal{E}(t_{n})+C \mathcal{E}(t_{n}) \int_{t_{n}}^{t_{n+1}}\big( 1+|x(t)|^{2 m-\epsilon}+|x_{n}|^{2 m-\epsilon}+|\bar{x}_{T_{n}, t_{n+1}}|^{2 m-\epsilon} \big) \rd t +C h^{2} \Upsilon_{n}+M_{n}, 
\end{align*}
where $M_{n}
= \sqrt{2 \gamma}(e^{-\frac{\gamma}{2} h} \frac{\gamma}{2}+ \sum_{\ell = 1}^{k} e^{-\alpha_{\ell} h} \lambda_{\ell}+e^{-\frac{\gamma}{2} h}) | \int_{t_{n}}^{t_{n+1}}(t_{n}-s) \rd W_{0}(s)| + e^{-\frac{\gamma}{2} h} \sum_{\ell = 1}^{k} \lambda_{\ell}\sqrt{2 \lambda_{\ell}} | \int_{t_{n}}^{t_{n+1}}(t_{n}-s) \rd W_{\ell}(s)|$. Moreover, by H\"older's inequality, 
\begin{align*}
\||M_{j}|^{p}\|_{L^{2}(\Omega)}^{2} 
\leq C \sum_{\ell = 0}^{k} \hE\bigg[ \Big| \int_{t_{j}}^{t_{j+1}}(t_{j+1}-s) \rd W_{\ell}(s) \Big|^{2 p}\bigg] 
\leq C h^{3 p}, \qquad \forall\, j = 0, 1, \cdots, T/h-1.
\end{align*}
Then, applying the discrete Gr\"onwall inequality and $\mathcal{E}(t_{0}) = 0$ indicates 
\begin{align*}
& \mathcal{E}(t_{n+1}) \leq \Big( \sum_{j = 0}^{n} ( C h^{2} \Upsilon_{j}+M_{j})\Big) \exp \bigg( \sum_{j = 0} ^ {n} C \int_{t_{j}}^{t_{j+1}} \big( 1+|x(t)|^{2 m-\epsilon} + |x_{n}|^{2 m-\epsilon}+|\bar{x}_{T_{n}, t_{n+1}}|^{2 m-\epsilon} \big) \rd t \bigg).
\end{align*}
By H\"older's inequality, one has that for any $n = 0, 1, \cdots, T/h-1$, 
\begin{align*}
\|\mathcal{E}^{p} (t_{n+1}) \|_{L^{2}(\Omega)}
&\leq \Big( \sum_{j = 0}^{n} \big( C h^{2 p}\|\Upsilon_{j}^{p}\|_{L^{2}(\Omega)}+C\| | M_{j}|^{p}\|_{L^{2}(\Omega)} \big) \Big) \\
& \qquad \times \Big\|\exp \Big( \sum_{j = 0}^{n} C p \int_{t_{j}}^{t_{j+1}} \big( 1+|x(t)|^{2 m-\epsilon}+|x_{n}|^{2 m-\epsilon}+ | \bar{x}_{T_{n}, t_{n+1}}|^{2 m-\epsilon} \big) \rd t\Big) \Big\|_{L^{2}(\Omega)}.
\end{align*}
Here, it follows from Jensen's inequality and Propositions \ref{expo} and \ref{Numerical:expo} that 
\begin{align*}
&\ \hE\Big[\exp \Big( \sum_{j = 0}^{n} C p \int_{t j}^{t_{j+1}}(1+|x(t)|^{2 m-\epsilon}+| x_n|^{2 m-\epsilon}+|\bar{x}_{T_n, t_{n+1}}|^{2 m-\epsilon}) \rd t\Big)\Big] \\
&\leq \hE\Big[\exp \Big(C p \int_{0}^{T}(1+|x(t)|^{2 m-\epsilon}+|x_{n}|^{2 m-\epsilon}+|\bar{x}_{T_n, t_{n+1}}|^{2 m-\epsilon}) \rd t\Big)\Big] \\
&\leq \frac{1}{T} \int_{0}^{T} \hE\Big[\exp \Big(T C p(1+|x(t)|^{2 m-\epsilon}+|x_{n}|^{2 m-\epsilon}+|\bar{x}_{T_n, t_{n+1}}|^{2 m-\epsilon})\Big)\Big] \rd t \\
&\leq C(p, T).
\end{align*}
Combining \eqref{ME}, \eqref{exactexpo}, Corollary \ref{numerical:momentbound} and the above estimates deduces that $ \hE[\mathcal{E}_n^{2 p}] \leq C h^{p}$ for all $n = 1, \cdots, T/h$. This together with the fact that $\|Y_{n}-Y(t_{n})\|^{2 p} \leq C \mathcal{E}_{n}^{2 p}$ completes the proof.
\end{proof}

Then, the following corollary provides a strong convergence result of order $\frac{1}{2}$ for the time-continuous version of the splitting AVF method.

\begin{corollary}\label{coro:half_convergence_rate}
Let $\tilde{Y}_t = (\tilde{v}_{t},\tilde{z}_{t},\tilde{x}_{t})^\top$ be defined by \eqref{continuousversion}. If Assumption \ref{ass.1} holds, then there exists $C>0$ such that for any $p \geq 1$, 
\begin{align*}
\sup_{t\in[0, T]} \| Y(t) - \tilde{Y}_t \|_{L^{2p}(\Omega, \hR^{k+2})}
\leq C h^{1/2}.
\end{align*}
\end{corollary}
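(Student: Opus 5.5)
We reduce the statement to Lemma \ref{1/2strongconver} by a triangle inequality at the grid points. Fix $t \in T_n=[t_n,t_{n+1})$ and write
\begin{align*}
\|Y(t)-\tilde Y_t\|_{L^{2p}} \le \|Y(t)-Y(t_n)\|_{L^{2p}} + \|Y(t_n)-Y_n\|_{L^{2p}} + \|Y_n-\tilde Y_t\|_{L^{2p}} .
\end{align*}
The middle term is bounded by $Ch^{1/2}$ by Lemma \ref{1/2strongconver}, uniformly in $n\le T/h$. It therefore suffices to show that both the exact solution and the continuous interpolation \eqref{continuousversion} are $\tfrac12$-Hölder in $L^{2p}$ over a single step.

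\textbf{Exact solution.} From the integral form of \eqref{GLE},
\[
Y(t)-Y(t_n)=\int_{t_n}^t \mu(Y(s))\,\rd s+\tilde\sigma\,(W(t)-W(t_n)),
\]
where $\mu$ denotes the drift. By \eqref{onedu}, $|\mu(Y)|\le C(1+\|Y\|^{2m+1-\epsilon})$, so the moment bound \eqref{ME} together with Hölder's inequality bounds the drift integral by $Ch$ in $L^{2p}$. The BDG inequality bounds the Brownian increment by $Ch^{1/2}$.

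\textbf{Numerical interpolation.} From \eqref{continuousversion},
\[
\tilde Y_t-Y_n=\frac{t-t_n}{h}\,(\mathcal A^n,\mathcal B^n_1,\dots,\mathcal B^n_k,\mathcal C^n)^\top+\int_{t_n}^t\tilde\mu(\tilde Y_s)\,\rd s+\tilde\sigma\,(W(t)-W(t_n)).
\]
\begin{itemize}
\item Since $(t-t_n)/h\le1$, the first term is at most $|\mathcal A^n|+\sum_\ell|\mathcal B^n_\ell|+|\mathcal C^n|$. Each of these carries an explicit factor $h$ multiplying a polynomial in $Y_n$ and $\bar Y_{T_n,t_{n+1}}$; in particular $|\mathcal A^n|\le Ch(1+|v_n|+|\bar v_{T_n,t_{n+1}}|+\|z_n\|+\|\bar z_{T_n,t_{n+1}}\|+|x_n|^{2m+1-\epsilon}+|\bar x_{T_n,t_{n+1}}|^{2m+1-\epsilon})$ by \eqref{onedu}. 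Their $L^{2p}$ norms are thus $O(h)$ by Corollary \ref{numerical:momentbound}, because $\widetilde H$ controls $|v|^2,\|z\|^2,|x|^{2m}$ (the cross term $\frac\gamma2 vx$ is absorbed by Young's inequality).
\item The linear drift $\tilde\mu$ contributes $O(h)$, using moments of $\tilde Y_s$. These follow either by applying Gronwall's inequality to \eqref{continuousversion} on $T_n$, or by observing that $\tilde Y_s$ equals $\Phi^S$ at time $s$ applied to $\bar Y_{T_n,t_{n+1}}$, an explicit Gaussian expression with bounded moments by \eqref{sdesolu} and Corollary \ref{numerical:momentbound}.
\item The noise term is again $O(h^{1/2})$ by BDG.
\end{itemize}

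The main point requiring care is the identification just used: the interpolation \eqref{continuousversion} must coincide on $T_n$ with the composition given by \eqref{sdesolu}, so that $\tilde Y_{t_n}=Y_n$ and the moment bounds transfer uniformly in $t$. Once this is checked, everything reduces to polynomial moment bounds, with exponential integrability entering only indirectly through Lemma \ref{1/2strongconver}. Taking the supremum over $t\in[0,T]$ then gives the claimed bound $Ch^{1/2}$.
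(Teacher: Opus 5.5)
Your proof is correct and is essentially the paper's argument: split $Y(t)-\tilde Y_t$ at $\lfloor t\rfloor$, bound $Y(t_n)-Y_n$ by Lemma \ref{1/2strongconver}, and bound the one-step increments $Y(t)-Y(t_n)$ and $\tilde Y_t-\tilde Y_{t_n}$ by $Ch^{1/2}$ using \eqref{ME}, \eqref{continuousversion} and Corollary \ref{numerical:momentbound}. One correction: the ``identification'' you single out is neither needed nor true, since $\tilde Y_{t_n}=Y_n$ holds directly by the definition \eqref{continuousversion}, whereas the flow \eqref{sdesolu} on $T_n$ starts from $\bar Y_{T_n,t_{n+1}}=Y_n+(\mathcal A^n,\mathcal B^n_1,\dots,\mathcal B^n_k,\mathcal C^n)^\top\neq Y_n$, so the two processes differ on $T_n$ and you should obtain the moments of $\tilde Y_s$ from your Gronwall option rather than from that identification.
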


\begin{proof} According to \eqref{continuousversion} and Corollary \ref{numerical:momentbound}, one has 
\begin{align*}
\sup_{t\in[0, T]} \| \tilde{Y}_{\lfloor t \rfloor}-\tilde{Y}_t \|_{L^{2p}(\Omega, \hR^{k+2})}
\leq C h^{1/2}.
\end{align*}
Then it follows from 
Lemma \ref{1/2strongconver} that 
\begin{align*}
\sup_{t\in[0, T]} \| Y(t)-\tilde{Y}_t \|_{L^{2p}(\Omega, \hR^{k+2})}
= \sup_{t\in[0, T]} 
\| Y(t) - Y(\lfloor t \rfloor) + Y(\lfloor t \rfloor) - \tilde{Y}_{\lfloor t \rfloor} + \tilde{Y}_{\lfloor t \rfloor} - \tilde{Y}_t \|_{L^{2p}(\Omega, \hR^{k+2})}
\leq Ch^{1/2}, 
\end{align*}
which completes the proof. 
\end{proof}

Based on the established strong convergence result of order $\frac{1}{2}$, we further derive the first-order convergence of the splitting AVF method \eqref{numsolu}--\eqref{sdesolu}. To this end, we rewrite the GLE \eqref{GLE} in the following compact form:\ 
\begin{align*}
\rd Y(t) = \mu(Y(t)) \rd t + \sigma \rd \tilde{W} (t), \qquad t \geq 0, 
\end{align*}
where $\sigma:= \text{diag} \left(\sqrt{2 \gamma}, \sqrt{2 \alpha_1}, \cdots, \sqrt{2 \alpha_k}, 0\right)$, $\tilde{W} := (W_0,W_1,\cdots,W_k,0)^\top$, and
\begin{align*}
\mu(y):= &\left( 
\begin{array}{c}
- \gamma y_0 - \nabla U(y_{k+1}) + \sum_{\ell = 1}^k \lambda_{\ell} y_{\ell} \\
- \alpha_1 y_1 - \lambda_{1} y_0 \\
\vdots \\
- \alpha_k y_{k} - \lambda_{k} y_{0} \\
y_0
\end{array} 
\right), \qquad 
\text{for} \,
y = 
\left( 
\begin{array}{c}
y_0\\
y_1\\
\vdots\\
y_{k}\\
y_{k+1}
\end{array} 
\right)\in \hR^{k+2}.
\end{align*}
For the sake of convenience, we also define
\begin{align*}
a(t) := 
\left(\begin{array}{c}
h^{-1} \mathcal{A}^n - \frac{\gamma}{2} \tilde{v}_{T_n, t} \\
h^{-1} \mathcal{B}_1^n - \alpha_1 \tilde{z}_{T_n, t}^1 + \frac{\gamma}{2} \lambda_1 \tilde{x}_{T_n, t} \\
\vdots \\
h^{-1} \mathcal{B}_k^n - \alpha_k \tilde{z}_{T_n, t}^k + \frac{\gamma}{2} \lambda_k \tilde{x}_{T_n, t} \\
h^{-1} \mathcal{C}^n - \frac{\gamma}{2} \tilde{x}_{T_n, t}
\end{array}\right), 
\qquad 
\hat{\mu}(y) := \left( 
\begin{array}{c}
- \frac{\gamma}{2} y_0 - \nabla U(y_{k+1}) + \sum_{\ell = 1}^k \lambda_{\ell} y_{\ell} \\
- \lambda_{1} y_0 - \frac{\gamma}{2} \lambda_{1} y_{k+1} \\
\vdots \\
- \lambda_{k} y_0 - \frac{\gamma}{2} \lambda_{k} y_{k+1}\\
\frac{\gamma}{2} y_{k+1} + y_0
\end{array} 
\right).
\end{align*}
Then $\tilde{Y}_t$ can be expressed in the form $
\rd \tilde{Y}_t = 
a(t) \rd t + \sigma \rd \tilde{W} (t)$.
In addition, it follows from Assumption \ref{ass.1} that for any $y, y' \in \hR^{k+2}$, 
\begin{align} \label{mubound}
\|\mu(y)-\mu(y')\| 
\leq \Big( C_U \big( 1 + \|y\|^{2 m-\epsilon} + \|y'\|^{2 m-\epsilon} \big) + \gamma + \sum_{\ell = 1}^k (\alpha_{\ell} + 2 \lambda_{\ell}) + 1 \Big) \|y - y'\|. 
\end{align}

\begin{theorem} [Strong convergence rate] \label{1strongconver}
Let $h^{*}$ be defined by \eqref{eq.def:h_star}. If Assumption \ref{ass.1} holds, then there exists $C := C(T)>0$ and $\tilde{h} \in(0, h^{*})$ such that for any $h \in(0, \tilde{h})$ and $p\ge1$, 
\begin{align*}
\Big\| \sup_{n \leq T/h}\|Y(t_{n}) - Y_{n}\| \Big\|_{L^{2p}(\Omega)} \leq C h. 
\end{align*}
\end{theorem}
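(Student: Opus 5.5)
We prove the theorem by a continuous-time error comparison between $Y(t)$ and the interpolant $\tilde Y_t$ of \eqref{continuousversion}. Since $\tilde Y_{t_n}=Y_n$, it suffices to bound $\sup_{t\le T}\|E(t)\|$ in $L^{2p}$, where $E(t):=Y(t)-\tilde Y_t$. Both processes are driven by the same additive noise $\sigma\,\rd\tilde W$, so $E$ is absolutely continuous with
\begin{align*}
\frac{\rd}{\rd t}E(t)=\mu(Y(t))-\mu(\tilde Y_t)+\big(\mu(\tilde Y_t)-a(t)\big).
\end{align*}
We split the local defect $R(t):=\mu(\tilde Y_t)-a(t)$, $t\in T_n$, into two parts. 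The first part, $\hat\mu(\tilde Y_t)-\hat\mu(\cdot)$ evaluated at the AVF midpoint/average quantities, is $O(h)$-Lipschitz-type with polynomial weights. The second part is the deviation of $\tilde Y_t$ from $Y_n$, which contains the stochastic increments $\int_{t_n}^t\sigma\,\rd W$. The deterministic pieces of the difference ($h^{-1}\mathcal A^n$ versus $-\frac\gamma2 \tilde v_t-\nabla U(\tilde x_t)+\sum\lambda_\ell\tilde z^\ell_t$, and similarly for the other components) are controlled by Taylor expansion, \eqref{deu}, and the fact that $\bar Y_{T_n,t_{n+1}}-Y_n=O(h)$ in every $L^q$ by Corollary \ref{numerical:momentbound}. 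The key observation is that the components of $R$ of order only $h^{1/2}$ are of the form $c\,(W_\ell(t)-W_\ell(t_n))$ or $\nabla^2U(\cdot)\cdot(\text{noise})$. These enter $\frac{\rd}{\rd t}E$ only linearly, so after integration in time they become $\int_{t_n}^{t_{n+1}}(W(s)-W(t_n))\,\rd s$, which is $O(h^{3/2})$ per step and has conditional mean zero.

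Next, we write the error equation with the nonlinearity linearized: $\mu(Y)-\mu(\tilde Y)=J(t)E(t)$, where $J(t)=\int_0^1 D\mu(\tilde Y_t+\theta E(t))\,\rd\theta$. The only unbounded entry of $J$ is $-\int_0^1\nabla^2U(\cdot)\,\rd\theta$ in the $(v,x)$ slot. Rather than using a plain Gr\"onwall inequality on $\|E\|$, we use the one-sided bound \eqref{nabla2U}. Pairing $E$ with a modified quadratic form adapted to the Hamiltonian structure makes the growth factor depend on $|\nabla^2U|$, and this is handled by exponential integrability, exactly as in Lemma \ref{1/2strongconver}. Concretely, we obtain
\begin{align*}
\|E(t)\|\le \exp\Big(C\!\int_0^t(1+|x(s)|^{2m-\epsilon}+|\tilde x_s|^{2m-\epsilon})\,\rd s\Big)\Big\|\int_0^t \Phi(t,s)R(s)\,\rd s\Big\|,
\end{align*}
where $\Phi$ is the resolvent of the linearized system. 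The exponential factor lies in every $L^q$ by Propositions \ref{expo} and \ref{Numerical:expo}, together with Jensen's inequality.

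The main obstacle is proving $\|\int_0^t\Phi(t,s)R(s)\,\rd s\|_{L^{q}}\le Ch$ uniformly, since $\Phi$ is random and not adapted in the forward sense. We handle this by splitting $R=R_1+R_2$. Here $R_1$ collects the terms bounded by $Ch(1+\|Y_n\|^{c}+\|\bar Y_{T_n,t_{n+1}}\|^{c})$, and these integrate trivially to $O(h)$. The term $R_2(s)=\Lambda_n\,(W(s)-W(\lfloor s\rfloor))$ has $\mathcal F_{t_n}$-measurable coefficient $\Lambda_n$. For $R_2$ we freeze $\Phi(t,s)\approx\Phi(t,\lfloor s\rfloor)$, and the freezing error is $O(h^{1/2})\cdot O(h^{1/2})=O(h)$, using the half-order results Lemma \ref{1/2strongconver} and Corollary \ref{coro:half_convergence_rate}. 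This reduces the term to a sum $\sum_n\Phi(t,t_n)\Lambda_n\int_{T_n}(W(s)-W(t_n))\,\rd s$. To avoid the non-adaptedness of $\Phi(t,t_n)$, we instead perform this step at the discrete level, as in the proof of Lemma \ref{1/2strongconver}: we write a one-step recursion $\mathcal E_{n+1}=(I+hJ_n)\mathcal E_n+\rho_n+M_n$ with $M_n$ a martingale difference of size $h^{3/2}$, and sum the martingale part using the BDG inequality to obtain $(\sum_n h^3)^{1/2}=O(h)$. Finally, taking the supremum over $n$ inside the norm follows from the same BDG/discrete Gr\"onwall argument combined with Hölder's inequality, which yields the stated bound for $h<\tilde h$.
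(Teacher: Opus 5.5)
\paragraph{Gap: the martingale step.} You correctly see that order $1$ requires cancellation among the $O(h^{3/2})$ noise increments, rather than summing their absolute values. You also correctly locate the obstacle: the propagator is random and not adapted. But the step you use to get past it does not work.

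Unroll your recursion $\mathcal E_{n+1}=(I+hJ_n)\mathcal E_n+\rho_n+M_n$. It gives $\mathcal E_n=\sum_{j<n}P_{n,j}(\rho_j+M_j)$ with $P_{n,j}=(I+hJ_{n-1})\cdots(I+hJ_{j+1})$. Each $J_i$ involves $\nabla^2U$ at points built from $x(t)$, $t\in T_i$, and from $x_i,\bar x_{T_i,t_{i+1}}$. Hence $P_{n,j}$ is not $\mathcal F_{t_j}$-measurable, $\sum_{j<n}P_{n,j}M_j$ is not a martingale transform, and BDG does not give $(\sum_n h^3)^{1/2}$. Moving to the discrete level is the same non-adaptedness problem, not a way around it.

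Moreover, the proof of Lemma \ref{1/2strongconver}, which you cite as the template, exploits no cancellation at all. It bounds $\sum_j|M_j|$, which is of size $h^{1/2}$, and that is exactly why it stops at order $\frac12$. As written, your argument only reproduces the half-order rate.

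\paragraph{What is missing.} You need a mechanism that produces a genuine martingale. There are two ways to supply it.

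\emph{The paper's route.} It uses a weighted energy identity for $\|Y(t)-\tilde Y_t\|^2\exp(-\int_0^t\chi_s\,\rd s)$, with $\chi$ frozen at the grid points (it uses $|x(t_n)|$, $|x_n|$ on $T_n$). Then $E(t_n)\exp(-\int_0^{t_n}\chi_r\,\rd r)$ is $\mathcal F_{t_n}$-measurable. After It\^o's formula for $\hat\mu(\tilde Y_s)-\hat\mu(\tilde Y_{t_n})$, pairing this frozen factor with $\int_{t_n}^s\nabla\hat\mu(\tilde Y_r)\sigma\,\rd\tilde W(r)$ gives a true discrete martingale. That term is bounded by Doob and BDG and absorbed via Young as $\frac18\sup\|\cdot\|^2+Ch^2$. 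The remaining cross terms, and the mismatch $|x(s)|^{2m-\epsilon}-|x(\lfloor s\rfloor)|^{2m-\epsilon}$, are $O(h^2)$ by the half-order results. Gr\"onwall, then H\"older with exponential integrability, removes the weight.

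\emph{Repairing your framework.} Use Abel summation. Let $S_j:=\sum_{i<j}M_i$; this is a genuine martingale with $\|\max_j\|S_j\|\|_{L^q}\le Ch$. Then $\sum_{j<n}P_{n,j}M_j=S_n+h\sum_{j=1}^{n-1}P_{n,j}J_jS_j$. The randomness of $P_{n,j}$ now enters only through $h\sum_j\|P_{n,j}\|\|J_j\|$, which exponential integrability controls.

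\paragraph{Two smaller points.} Variation of constants gives $E(t)=\int_0^t\Phi(t,s)R(s)\,\rd s$ exactly, so the exponential prefactor in your display belongs in a bound on $\|\Phi\|$, not in front of an expression that still contains $\Phi$. Also, the freezing error $\Phi(t,s)-\Phi(t,\lfloor s\rfloor)$ is $O(h)$ because $\partial_s\Phi(t,s)=-\Phi(t,s)J(s)$; it does not come from the half-order results.
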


\begin{proof}
For brevity, we denote 
\begin{align*} 
\chi_t := 
2 \Big( C_U \big( 1 + |x (t_n)|^{2 m-\epsilon} + |x_n|^{2 m-\epsilon} \big) + \gamma + \sum_{\ell = 1}^k (\alpha_{\ell} + 2 \lambda_{\ell}) + 1 \Big), \qquad t\in[t_n, t_{n+1}). 
\end{align*}
Using the chain rule and \eqref{mubound} yields 
\begin{align*}
\tfrac{\|Y(t) - \tilde{Y}_t\|^2}{\exp ( \int_0^t \chi_s \rd s)} 
&= \|Y(0) - \tilde{Y}_0\|^2 + \int_0^t \tfrac{2 \langle Y(s) - \tilde{Y}_s, \mu(Y(s))-a(s)\rangle - \|Y(s) - \tilde{Y}_s\|^2\chi_s}{\exp ( \int_0^s \chi_r \rd r)} \rd s \\
&= \int_0^t \tfrac{2 \langle Y(s) - \tilde{Y}_s, \mu(Y(s))-\mu(\tilde{Y}_s)\rangle - \|Y(s) - \tilde{Y}_s\|^2\chi_s}{\exp ( \int_0^s \chi_r \rd r)} \rd s + \int_0^t \tfrac{2 \langle Y(s) - \tilde{Y}_s, \mu(\tilde{Y}_s)-a(s)\rangle}{\exp ( \int_0^s \chi_r \rd r)} \rd s \\ 
&\leq \int_0^t \tfrac{2 C_{U} \|Y(s)-\tilde{Y}_s\|^2 \left(|x(s)|^{2m-\epsilon} - |x( \lfloor s \rfloor)|^{2m-\epsilon} + |\tilde{x}_s|^{2m-\epsilon} - |\tilde{x}_{\lfloor s \rfloor}|^{2m-\epsilon}\right)}{\exp ( \int_0^s \chi_r \rd r)} \rd s + \int_0^t \tfrac{2 \langle Y(s) - \tilde{Y}_s, \mu(\tilde{Y}_s)-a(s)\rangle}{\exp ( \int_0^s \chi_r \rd r)} \rd s.
\end{align*}
Owing to \eqref{GLE} and \eqref{continuousversion}, 
$\| x(t)- x(\lfloor t \rfloor) \|_{L^p(\Omega)} + \| \tilde{x}_t - \tilde{x}_{\lfloor t \rfloor} \|_{L^p(\Omega)} \leq C h$, which together with Corollary \ref{coro:half_convergence_rate} shows that for any $ \tau \in [0, T]$ and $p \geq 1$, 
\begin{align} \label{eq:order_one_estima_origin}
&\ \Big\| \sup_{t\in[0, \tau]} \tfrac{\|Y(t) - \tilde{Y}_t\|^2}{\exp ( \int_0^t \chi_s \rd s)} \Big\|_{L^p(\Omega)} \notag \\
&\leq \int_0^\tau \big\| \tfrac{C \|Y(t) - \tilde{Y}_t\|^2 \left((|x(t)|^{2m-\epsilon-1} + |x(\lfloor t \rfloor)|^{2m-\epsilon-1}) |x(t) - x(\lfloor t \rfloor)| + (|\tilde{x}_t|^{2m-\epsilon-1} + |\tilde{x}_{\lfloor t \rfloor}|^{2m-\epsilon-1}) |\tilde{x}_t - \tilde{x}_{\lfloor t \rfloor}|\right)}{\exp ( \int_0^t \chi_s \rd s)} \big\|_{L^p(\Omega)} \rd t \notag \\
&\quad + \Big\|\sup_{t\in[0, \tau]} \int_0^t \tfrac{2 \langle Y(s) - \tilde{Y}_s, \mu(\tilde{Y}_s)-a(s)\rangle}{\exp ( \int_0^s \chi_r \rd r)} \rd s \Big\|_{L^p(\Omega)} \notag \\
&\leq \Big\| \sup_{t\in[0, \tau]} \int_0^t \tfrac{2 \langle Y(s) - \tilde{Y}_s, \mu(\tilde{Y}_s)-a(s)\rangle}{\exp ( \int_0^s \chi_r \rd r)} \rd s \Big\|_{L^p(\Omega)} +Ch^2.
\end{align}
For $s \in [t_n, t_{n+1})$, $n = 0, 1, \cdots, T/h-1$, one has 
\begin{align*}
\mu(\tilde{Y}_s)-a(s) = \hat{\mu}(\tilde{Y}_s) - \hat{\mu}(\tilde{Y}_{t_n}) + J_n, 
\end{align*}
where
\begin{align*}
J_n = \left(\begin{array}{c}
-\frac{\gamma}{2} \bar{v}_{T_n, t_{n+1}} - \nabla U(\bar{x}_{T_n, t_{n+1}}) + \sum_{\ell = 1}^k \lambda_\ell \bar{z}_{T_n, t_{n+1}}^\ell - h^{-1}\mathcal{A}^n \\
-\lambda_1 \bar{v}_{T_n, t_{n+1}} - \frac{\gamma}{2} \lambda_1 \bar{x}_{T_n, t_{n+1}} - h^{-1}\mathcal{B}_1^n \\
\vdots \\
-\lambda_k \bar{v}_{T_n, t_{n+1}} - \frac{\gamma}{2} \lambda_k \bar{x}_{T_n, t_{n+1}} - h^{-1}\mathcal{B}_k^n \\
\frac{\gamma}{2} \bar{x}_{T_n, t_{n+1}} + \bar{v}_{T_n, t_{n+1}} - h^{-1}\mathcal{C}^n
\end{array}\right). 
\end{align*}
A similar argument to that in \cite{QuispelMcLaren2008} leads to 
\begin{align} \label{eq:estimate_for_J_n}
\sup_{n\leq T/h}\|J_{n}\|_{L^{2p}(\Omega)}^{2} \leq C h^{2}.
\end{align}
On the other hand, applying It\^o's formula yields 
\begin{align}\label{eq:mu(Y_s)-mu(Y_n)-ItoFormula}
\hat{\mu}(\tilde{Y}_{s})-\hat{\mu}(\tilde{Y}_{t_n}) = \int_{t_{n}}^{s} \big( \nabla\hat{\mu}(\tilde{Y}_{r}) a(r)+\frac{1}{2} \operatorname{tr}(\sigma^{\top} \nabla^2 \hat{\mu}(\tilde{Y}_{r}) \sigma) \big) \rd r + \int_{t_{n}}^{s} \nabla\hat{\mu}(\tilde{Y}_{r}) \sigma \rd \tilde{W} (r).
\end{align}
By the H\"older and Young inequalities, 
\begin{align} 
&\ \Big\|\sup_{t\in[0, \tau]} \int_0^t \tfrac{2 \langle Y(s) - \tilde{Y}_s, \mu (\tilde{Y}_s)-a(s)\rangle}{\exp ( \int_0^s \chi_r \rd r)} \rd s \Big\|_{L^p(\Omega)} \notag \\
&\leq \Big\|\sup_{t\in[0, \tau]} \int_0^t \tfrac{2 \left\langle Y(s) - \tilde{Y}_s, \int_{\lfloor s \rfloor}^s \left(\nabla \hat{\mu}(\tilde{Y}_r)a(r) + \frac{1}{2}\operatorname{tr}(\sigma^{\top} \nabla^2 \hat{\mu}(\tilde{Y}_{r}) \sigma)\right) \rd r \right\rangle}{\exp ( \int_0^s \chi_r \rd r)} \rd s \Big\|_{L^p(\Omega)} \notag \\
&\quad + \Big\|\sup_{t\in[0, \tau]} \int_0^t\tfrac{2 \left\langle Y(s) - \tilde{Y}_s, \int_{\lfloor s \rfloor}^s \nabla\hat{\mu}(\tilde{Y}_r)\sigma \rd \tilde{W} (r) \right\rangle}{\exp ( \int_0^s \chi_r \rd r)} \rd s \Big\|_{L^p(\Omega)} + \Big\|\sup_{t\in[0, \tau]} \int_0^t \tfrac{2 \left\langle Y(s) - \tilde{Y}_s, J_{\lfloor s \rfloor /h} \right\rangle}{\exp ( \int_0^s \chi_r \rd r)} \rd s \Big\|_{L^p(\Omega)} \notag \\
&\leq C \int_0^\tau \Big\|\tfrac{\|Y(s) - \tilde{Y}_s\|}{\exp ( \int_0^s \frac12\chi_r \rd r)} \Big\|^2_{L^{2p}(\Omega)} \rd s + \Big\|\sup_{t\in[0, \tau]} \int_0^t \tfrac{2 \left\langle Y(s) - \tilde{Y}_s, \int_{\lfloor s \rfloor}^s \nabla\hat{\mu}(\tilde{Y}_r)\sigma \rd \tilde{W} (r) \right\rangle}{\exp ( \int_0^s \chi_r \rd r)} \rd s \Big\|_{L^p(\Omega)} +Ch^2, \label{eq:X(s)-Ys, F(Ys)-a(s)_in_Thm}
\end{align}
where Corollary \ref{numerical:momentbound} and \eqref{eq:estimate_for_J_n} were also used. According to the chain rule, 
\begin{align*}
\tfrac{Y(s) - \tilde{Y}_s}{\exp ( \int_0^s {\chi_r} \rd r)} = \tfrac{Y({\lfloor s \rfloor}) - \tilde{Y}_{\lfloor s \rfloor}}{\exp ( \int_0^{\lfloor s \rfloor} {\chi_r} \rd r)} + \int_{\lfloor s \rfloor}^s \tfrac{\mu(Y(r)) - a(r)}{\exp ( \int_0^r {\chi_{\iota}} \rd \iota)}\rd r + \int_{\lfloor s \rfloor}^s \tfrac{(Y(r)-\tilde{Y}_r)(-\chi_r)}{\exp ( \int_0^r {{\chi_{\iota}}} \rd \iota)}\rd r, 
\end{align*}
which helps us to make the following decomposition:\ 
\begin{align*}
&\ \Big\|\sup_{t\in[0, \tau]} \int_0^t \tfrac{2 \langle Y(s) - \tilde{Y}_s, \int_{\lfloor s \rfloor}^s \nabla \hat{\mu}(\tilde{Y}_r)\sigma \, \rd \tilde{W} (r)\rangle}{\exp ( \int_0^s \chi_r \, \rd r)} \rd s \Big\|_{L^p(\Omega)} \\
&\leq \Big\| 2\sup_{t\in[0, \tau]} \int_0^t \Big\langle \tfrac{Y(\lfloor s \rfloor) - \tilde{Y}_{\lfloor s \rfloor}}{\exp ( \int_0^{\lfloor s \rfloor} \chi_r \, \rd r)} , \int_{\lfloor s \rfloor}^s \nabla \hat{\mu}(\tilde{Y}_r)\sigma \, \rd \tilde{W} (r) \Big\rangle \rd s \Big\|_{L^p(\Omega)} \\ 
&\quad + \Big\|2\sup_{t\in[0, \tau]} \int_0^t \Big\langle \int_{\lfloor s \rfloor}^s \tfrac{\mu(Y(r)) - a(r)}{\exp ( \int_0^r \chi_{\iota} \, \rd \iota)}\rd r , \int_{\lfloor s \rfloor}^s \nabla \hat{\mu}(\tilde{Y}_r)\sigma \, \rd \tilde{W} (r) \Big\rangle \rd s \Big\|_{L^p(\Omega)} \\
&\quad + \Big\|2\sup_{t\in[0, \tau]} \int_0^t \Big\langle \int_{\lfloor s \rfloor}^s \tfrac{(Y(r)-\tilde{Y}_r)(-\chi_r)}{\exp ( \int_0^r \chi_{\iota} \, \rd \iota)}\rd r , \int_{\lfloor s \rfloor}^s \nabla \hat{\mu}(\tilde{Y}_r)\sigma \, \rd \tilde{W} (r) \Big\rangle \rd s \Big\|_{L^p(\Omega)} \\
&= : K_1+K_2+K_3.
\end{align*} 
To proceed, we further decompose the term $K_1$ as follows:\ 
\begin{align*}
\| K_1 \|_{L^p(\Omega)} 
&\leq \Big\|\sup_{t\in[0, \tau]}\Big| \sum\limits_{k = 0}^{\lfloor t \rfloor/h-1} \int_{t_k}^{t_{k+1}} {2\Big\langle \tfrac{Y(\lfloor s \rfloor) - \tilde{Y}_{\lfloor s \rfloor}}{\exp ( \int_0^{\lfloor s \rfloor} {\chi_r} \rd r)}, \int_{\lfloor s \rfloor}^s \nabla\hat{\mu}(\tilde{Y}_r)\sigma \rd \tilde{W} (r) \Big\rangle}\rd s \Big| \Big\|_{L^p(\Omega)} \\
&\quad + \Big\|\sup_{t\in[0, \tau]} \int_{\lfloor t \rfloor}^t {2\Big\langle \tfrac{Y(\lfloor s \rfloor) - \tilde{Y}_{\lfloor s \rfloor}}{\exp ( \int_0^{\lfloor s \rfloor} {\chi_r} \rd r)}, \int_{\lfloor s \rfloor}^s \nabla\hat{\mu}(\tilde{Y}_r)\sigma \rd \tilde{W} (r) \Big\rangle}\rd s \Big\|_{L^p(\Omega)} \\
&= :K_{1, 1}+K_{1, 2}.
\end{align*}
Here, duo to It\^o's isometry and Lemma \ref{1/2strongconver}, the term $K_{1, 2}$ can be immediately estimated by
\begin{align}\label{eq:order_one_estima_S_12}
K_{1, 2}\leq Ch^2.
\end{align}
In contrast, the term $K_{1, 1}$ should be treated more carefully. Noting that
\begin{align*}
\zeta_n := \sum\limits_{k = 0}^{n-1} \int_{t_k}^{t_{k+1}} 
{2\Big\langle \tfrac{Y(\lfloor s \rfloor) - \tilde{Y}_{\lfloor s \rfloor}}{\exp ( \int_0^{\lfloor s \rfloor} {\chi_r} \rd r)} , \int_{\lfloor s \rfloor}^s \nabla\hat{\mu}(\tilde{Y}_r)\sigma \rd \tilde{W} (r) \Big\rangle}\rd s, \qquad n\in \mathbb{N}
\end{align*}
is a discrete martingale. Then using Doob's discrete martingale inequality, BDG's inequality and H\"older's inequality shows 
\begin{align}\label{eq:order_one_estima_S_11}
K_{1, 1} 
&\leq C_p \Big\| \sum\limits_{k = 0}^{\lfloor \tau \rfloor/h-1} \int_{t_k}^{t_{k+1}} {2\Big\langle \tfrac{Y(\lfloor s \rfloor) - \tilde{Y}_{\lfloor s \rfloor}}{\exp ( \int_0^{\lfloor s \rfloor} {\chi_r} \rd r)}, \int_{\lfloor s \rfloor}^s \nabla\hat{\mu}(\tilde{Y}_r)\sigma \rd \tilde{W} (r) \Big\rangle}\rd s \Big\|_{L^p(\Omega)} \notag \\
&\leq C_p \bigg( \sum\limits_{k = 0}^{\lfloor \tau \rfloor/h-1} \Big\| \int_{t_k}^{t_{k+1}} {2\Big\langle \tfrac{Y(\lfloor s \rfloor) - \tilde{Y}_{\lfloor s \rfloor}}{\exp ( \int_0^{\lfloor s \rfloor} {\chi_r} \rd r)}, \int_{\lfloor s \rfloor}^s \nabla\hat{\mu}(\tilde{Y}_r)\sigma \rd \tilde{W} (r) \Big\rangle}\rd s \Big\|^2_{{L^p(\Omega)}}\bigg)^{\frac{1}{2}} \notag \\
&\leq C_p \bigg(h \int_{0}^{\tau} \Big\|\tfrac{\|Y(\lfloor s \rfloor)-\tilde{Y}_{\lfloor s \rfloor}\|}{\exp ( \int_0^{\lfloor s \rfloor} {\frac12} {{\chi_r}} \rd r)} \Big\|^2_{{L^{2p}(\Omega)}} \Big\| \int_{\lfloor s \rfloor}^s \nabla\hat{\mu}(\tilde{Y}_r)\sigma \rd \tilde{W} (r) \Big\|^2_{{L^{2p}(\Omega, \hR^{k+2})}} \rd s \bigg)^{\frac{1}{2}} \notag \\
&\leq C_p \sup_{t\in [0, \tau]} \Big\|\tfrac{\|Y(t) - \tilde{Y}_t\|}{\exp ( \int_0^t {\frac12} {\chi_r} \rd r)} \Big\|_{L^{2p}(\Omega)} \bigg( h \int_{0}^{\tau} \Big\| \int_{\lfloor s \rfloor}^s \nabla\hat{\mu}(\tilde{Y}_r)\sigma \rd \tilde{W} (r) \Big\|^2_{L^{2p}(\Omega, \hR^{k+2})}\rd s \bigg)^{\frac{1}{2}} \notag \\
&\leq \dfrac{1}{8} \sup_{t\in [0, \tau]} \Big\|\tfrac{\|Y(t) - \tilde{Y}_t\|}{\exp ( \int_0^t {\frac12} {\chi_r} \rd r)} \Big\|^2_{L^{2p}(\Omega)} + Ch^2.
\end{align}
Next, we estimate the term $K_2$. In view of \eqref{mubound}, \eqref{eq:mu(Y_s)-mu(Y_n)-ItoFormula} and \eqref{eq:estimate_for_J_n}, 
\begin{align}\label{eq:order_one_estima_S_2}
K_2 &\leq \Big\|2\sup_{t\in[0, \tau]} \int_0^t \Big\langle \int_{\lfloor s \rfloor}^s \tfrac{\mu(Y(r)) - \mu(\tilde{Y}_r)}{\exp ( \int_0^r {\chi_{\iota}} \rd \iota)}\rd r , \int_{t_n}^s \nabla\hat{\mu}(\tilde{Y}_r)\sigma \rd \tilde{W} (r) \Big\rangle \rd s \Big\|_{L^p(\Omega)} \notag \\
&\quad + \Big\|2\sup_{t\in[0, \tau]} \int_0^t \Big\langle \int_{\lfloor s \rfloor}^s \tfrac{\mu(\tilde{Y}_r)-a(r)}{\exp ( \int_0^r {\chi_{\iota}} \rd \iota)}\rd r , \int_{t_n}^s \nabla\hat{\mu}(\tilde{Y}_r)\sigma \rd \tilde{W} (r) \Big\rangle \rd s \Big\|_{L^p(\Omega)} \notag \\
&\leq C \int_0^\tau \Big( \int_{\lfloor s \rfloor}^s \Big\|\tfrac{( 1 + |x(r)|^{2m} + |\tilde{x}_r|^{2m} ) \| Y(r) - \tilde{Y}_r \|}{\exp ( \int_0^r {\chi_{\iota}} \rd \iota)} \Big\|_{L^{2p}(\Omega)} \rd r \Big) \Big\| \int_{t_n}^s \nabla\hat{\mu}(\tilde{Y}_r)\sigma \rd \tilde{W} (r) \Big\|_{L^{2p}(\Omega, \hR^{k+2})} \rd s \notag \\
&\quad + 2 \int_0^\tau \Big( \int_{\lfloor s \rfloor}^s \Big\|\tfrac{|\hat{\mu}(\tilde{Y}_r) - \hat{\mu}(\tilde{Y}_{\lfloor r \rfloor})| + |J_{\lfloor r \rfloor / h}}{\exp ( \int_0^r {\chi_{\iota}} \rd \iota)} \Big\|_{L^{2p}(\Omega)} \rd r \Big)\Big\| \int_{t_n}^s \nabla\hat{\mu}(\tilde{Y}_r)\sigma \rd \tilde{W} (r) \Big\|_{L^{2p}(\Omega, \hR^{k+2})} \rd s \notag \\
&\leq Ch^2.
\end{align}
In a similar manner, one can also get $K_3 \leq Ch^2$. Recalling \eqref{eq:order_one_estima_S_11}, \eqref{eq:order_one_estima_S_12}, \eqref{eq:order_one_estima_S_2} indicates 
\begin{align*}
\Big\|\sup_{t\in[0, \tau]} \int_0^t \tfrac{2 \left\langle Y(s) - \tilde{Y}_s, \int_{\lfloor s \rfloor}^s \nabla\hat{\mu}(\tilde{Y}_r)\sigma \rd \tilde{W} (r) \right\rangle}{\exp ( \int_0^s \chi_r \rd r)} \rd s \Big\|_{L^p(\Omega)} \leq \dfrac{1}{8} \sup_{t\in [0, \tau]} \Big\|\tfrac{\|Y(t) - \tilde{Y}_t\|}{\exp ( \int_0^t \frac{1}{2}{\chi_r} \rd r)}\Big\|^2_{L^{2p}(\Omega)}+Ch^2.
\end{align*}
Inserting the above estimate into \eqref{eq:X(s)-Ys, F(Ys)-a(s)_in_Thm} yields 
\begin{align*}
&\ \Big\|\sup_{t\in[0, \tau]} \int_0^t \tfrac{2 \langle Y(s) - \tilde{Y}_s, \mu(\tilde{Y}_s)-a(s)\rangle}{\exp ( \int_0^s \chi_r \rd r)} \rd s \Big\|_{L^p(\Omega)} \\
&\leq C \int_0^\tau \Big\|\tfrac{\|Y(s) - \tilde{Y}_s\|}{\exp ( \int_0^s \frac12\chi_r \rd r)} \Big\|^2_{L^{2p}(\Omega)} \rd s + \dfrac{1}{8} \sup_{t\in [0, \tau]} \Big\|\tfrac{\|Y(t) - \tilde{Y}_t\|}{\exp ( \int_0^t \frac{1}{2}{\chi_r} \rd r)}\Big\|^2_{L^{2p}(\Omega)}+Ch^2, 
\end{align*}
which together with \eqref{eq:order_one_estima_origin} in turns gives that for any $\tau\in[0, T]$, 
\begin{align*}
&\ \Big\| \sup_{t\in[0, \tau]} \tfrac{\|Y(t) - \tilde{Y}_t\|^2}{\exp ( \int_0^t \chi_s \rd s)} \Big\|_{L^p(\Omega)} 
= \Big\| \sup_{t\in[0, \tau]} \tfrac{\|Y(t) - \tilde{Y}_t\|}{\exp ( \int_0^t \frac12 \chi_s \rd s)} \Big\|^2_{L^{2p}(\Omega)} \\
&\leq C \int_0^\tau \Big\|\sup_{s\in[0, t]} \tfrac{\|Y(s) - \tilde{Y}_s\|}{\exp ( \int_0^s \frac12\chi_r \rd r)}\Big\|^2_{L^{2p}(\Omega)} \rd t + \dfrac{1}{8} \Big\|\sup_{t\in [0, \tau]} \tfrac{\|Y(t)-\tilde{Y}_t\|}{\exp ( \int_0^t \frac{1}{2}{\chi_r} \rd r)} \Big\|^2_{L^{2p}(\Omega)} +Ch^2.
\end{align*}
By Gr\"onwall's inequality, 
\begin{align*}
&\Big\| \sup_{t\in[0, T]} \tfrac{\|Y(t) - \tilde{Y}_t\|^2}{\exp ( \int_0^t \chi_s \rd s)} \Big\|_{L^p(\Omega)} \leq Ch^2 e^{CT}.
\end{align*}
Finally, using H\"{o}lder's inequality and Propositions \ref{expo} and \ref{Numerical:expo} concludes 
\begin{align*}
\!
\Big\|\sup_{t\in[0, T]} \|Y(t) - \tilde{Y}_t\|\Big\|_{L^{2p}(\Omega)} \leq \Big\|\sup_{t\in[0, T]} \tfrac{\|Y(t) - \tilde{Y}_t\|}{\exp ( \int_0^t {\frac{1}{2}\chi_r} \rd r)}\Big\|_{L^{4p}(\Omega)}
\!
\Big\| {\exp \Big( \int_0^{T} {\tfrac{1}{2}{\chi_r}} \rd r \Big)}\Big\|_{L^{4p}(\Omega)} \leq C h, 
\end{align*}
which completes the proof. 
\end{proof}

\section{Probability density function and its convergence}
\label{sec.PDF}

In this section, we first study the existence and smoothness of the probability density function of the numerical solution generated by the splitting AVF method \eqref{numsolu}--\eqref{sdesolu}, and then show that it converges with order 1 to the probability density function of the exact solution to the GLE \eqref{GLE}.

\subsection{Existence and smoothness of the probability density function}
In this subsection, we show the Malliavin differentiability and uniform non-degeneracy of the splitting AVF scheme \eqref{numsolu}--\eqref{sdesolu}, which in turn implies that the numerical solution admits a smooth probability density function. With the deterministic initial value, we first establish the Malliavin differentiability of the numerical solution using mathematical induction.

\begin{proposition}[Malliavin differentiability] \label{Numerical:Malliavindifferentiability}
If Assumption \ref{ass.1} holds, then $Y_n \in \mathbb{D}^{\infty}(\hR^{k + 2})$ for $n = 0, 1, \cdots, T/h$. 
\end{proposition}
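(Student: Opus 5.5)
We will argue by induction on $n$. The case $n=0$ is trivial, since $Y_0$ is deterministic and so $Y_0\in\mathbb{D}^\infty(\hR^{k+2})$ with all Malliavin derivatives vanishing. Now suppose $Y_n\in\mathbb{D}^\infty(\hR^{k+2})$. By \eqref{numsolu}, \eqref{demapphi} and \eqref{sdesolu}, one step of the scheme has the form
\begin{align*}
Y_{n+1} = \mathcal{M}_h\,\phi_h(Y_n) + \xi_n,
\end{align*}
where $\mathcal{M}_h$ is the deterministic $(k+2)\times(k+2)$ matrix read off from \eqref{sdesolu}, and $\xi_n$ collects the Wiener integrals $\int_{t_n}^{t_{n+1}} e^{-c(t_{n+1}-s)}\sqrt{2c}\,\rd W_\ell(s)$. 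Each component of $\xi_n$ is a Gaussian element of the first chaos, so it lies in $\mathbb{D}^\infty$, with $\mathcal{D}_r\xi_n$ deterministic and supported on $[t_n,t_{n+1}]$. Since $\mathbb{D}^\infty$ is a linear space, it therefore suffices to show that $\phi_h(Y_n)\in\mathbb{D}^\infty(\hR^{k+2})$.

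By Remark \ref{phi:bijection_smooth}, $\phi_h$ is $C^\infty$. The standard chain rule (Nualart, Proposition 1.2.3 and its $\mathbb{D}^\infty$ extension) states that $\varphi(F)\in\mathbb{D}^\infty$ whenever $F\in\mathbb{D}^\infty$ and $\varphi$ is smooth with all derivatives of polynomial growth. So the key step is to show that every derivative $D^j\phi_h$ grows at most polynomially. We would derive this from the implicit relation $G(\phi_h(y),y,h)=0$ in \eqref{implicitavf}.

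\emph{Growth of $\phi_h$ itself.} Taking $y_2=0$ in the monotonicity estimate from the proof of Proposition \ref{solvability}, we get
\begin{align*}
c\|\phi_h(y)\|^2\le \langle \phi_h(y),G(\phi_h(y),y,h)-G(0,y,h)\rangle = -\langle \phi_h(y),G(0,y,h)\rangle .
\end{align*}
Hence $\|\phi_h(y)\|\le C\|G(0,y,h)\|\le C(1+\|y\|^{2m+1})$ by \eqref{onedu}.

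\emph{First derivative.} Differentiating the implicit relation gives
\begin{align*}
D\phi_h(y) = -[\partial_1 G(\phi_h(y),y,h)]^{-1}\partial_2 G(\phi_h(y),y,h).
\end{align*}
The monotonicity estimate shows that the symmetric part of $\partial_1G$ is bounded below by $c\,\mathrm{Id}$ with $c>0$ independent of the argument, for $h<h^*$. Thus $\|[\partial_1 G]^{-1}\|\le c^{-1}$ uniformly. Moreover, $\partial_2 G$ involves only $\nabla^2U$ evaluated at convex combinations of $x_n$ and $\bar x$, so it grows polynomially by \eqref{secondu}. Therefore $D\phi_h$ grows polynomially.

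\emph{Higher derivatives.} Differentiating repeatedly, each $D^j\phi_h$ equals $[\partial_1G]^{-1}$ times a polynomial in lower-order derivatives of $\phi_h$ and in derivatives of $\nabla U$. Since $U\in C_p^\infty$, an induction on $j$ gives polynomial growth of every $D^j\phi_h$.

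The main obstacle is the uniform invertibility of $\partial_1G$ together with the polynomial growth of all derivatives of $\phi_h$; the first-order case follows from the monotonicity already proved, and the rest is bookkeeping.

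Applying the chain rule then yields $\phi_h(Y_n)\in\mathbb{D}^\infty$. Explicitly, $\mathcal{D}^j\phi_h(Y_n)$ is a finite sum of terms $D^i\phi_h(Y_n)[\mathcal{D}^{j_1}Y_n,\dots,\mathcal{D}^{j_i}Y_n]$. Each such term is in $L^p$ for every $p$ by Hölder's inequality, using the moment bounds of Corollary \ref{numerical:momentbound}, which give all moments of $Y_n$, and the induction hypothesis. This completes the induction. As a by-product, the same recursion should later give the derivative bounds uniformly in $n\le T/h$. The present statement only needs them for each fixed $n$, so crude $h$-dependent constants suffice here.
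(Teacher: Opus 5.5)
\textbf{The gap.} The step that fails is the one you single out as the main obstacle. You claim that the symmetric part of $\partial_1 G$ is bounded below by $c\,\mathrm{Id}$ uniformly in the argument. This is false under Assumption \ref{ass.1}, and so is the strong monotonicity of $y\mapsto G(y,Y_n,h)$ behind your bound $\|\phi_h(y)\|\le C\|G(0,y,h)\|$.

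In $\partial G/\partial \bar Y_{T_n,t_{n+1}}$ the $(v,x)$ entry is $hF_1$, while the $(x,v)$ entry is $-\frac h2$. So the symmetric part contains the principal block
\[
\begin{pmatrix} 1+\frac{\gamma h}{4} & \frac h2\big(F_1-\frac12\big)\\ \frac h2\big(F_1-\frac12\big) & 1-\frac{\gamma h}{4}\end{pmatrix},
\]
which is indefinite as soon as $F_1$ exceeds roughly $2/h$. Since $U$ grows like $|x|^{2m}$ with $m\ge 2$, $\nabla^2U$ is unbounded above, so this happens for large $|x_n|$ and $|\bar x_{T_n,t_{n+1}}|$, for every fixed $h$.

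The inequality you quote from the proof of Proposition \ref{solvability} does not rescue this. It bounds the cross term $h\,\Delta v\,\Delta x\int_0^1\int_0^1\xi\,\nabla^2U(\cdot)\,\rd\theta\,\rd\xi$ from below by $-Kh|\Delta v||\Delta x|$ using only $\nabla^2 U\ge -K$, and that bound is invalid when $\Delta v\,\Delta x<0$. As a result, both deterministic inputs of your argument are left unjustified: the polynomial growth of $\phi_h$, and the control of $(\partial_1G)^{-1}$.

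\textbf{How to repair it.} Both inputs are true and can be obtained without monotonicity.

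For the growth of $\phi_h$, use the exact energy conservation $H(\phi_h(y))=H(y)$ of the AVF step. Combine it with the coercivity bound $H(y')\ge \frac14|v'|^2+\frac12\|z'\|^2+\frac{C_1}{2}|x'|^{2m}-C$ and the upper bound $H(y)\le C(1+\|y\|^{2m})$. This gives $\|\phi_h(y)\|\le C(1+\|y\|^{m})$.

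For the inverse, argue as the paper does. $\det(\partial G/\partial\bar Y_{T_n,t_{n+1}})$ is affine in $F_1$ with a positive $O(h^2)$ coefficient, and it is at least $\frac12$ for small $h$ because $F_1\ge -K/2$. Every entry of the adjugate is also affine in $F_1$. Hence $\|(\partial_1 G)^{-1}\|\le C(1+|F_1|)$, which grows polynomially by \eqref{secondu}. Polynomial growth is all your induction on $D^j\phi_h$ needs.

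\textbf{Comparison with the paper.} With this repair your proof is valid and takes a genuinely different route. The paper never shows $\phi_h\in C_p^\infty$. It only checks $\nabla\phi_h(Y_n)\in L^{\infty-}$ through the moment bounds of Corollary \ref{numerical:momentbound}. It then approximates by the cut-off $\phi_h\rho_N$, closes with Nualart's Lemma 1.5.3, and leaves higher orders to ``a similar argument''. Your deterministic route handles all orders at once, through the standard chain rule for $C_p^\infty$ functions of $\mathbb{D}^\infty$ random variables.
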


\begin{proof}
Assume that $Y_n \in \mathbb{D}^{\infty}(\hR^{k+2})$. We aim to show that $Y_{n + 1} \in \mathbb{D}^{\infty} (\hR^{k+2})$. To this end, we begin by showing $\bar{Y}_{T_n, t_{n + 1}} \in \mathbb{D}^{\infty}(\hR^{k+2})$. For $y\in \hR^{k + 2}$, define $\rho_N(y)$ to be equal to $1$ if $\|y\|<N$, and $0$ if $\|y\| \geq N+1$. Clearly, 
\begin{align} \label{Drho}
\sup_{y\in \hR^{k + 2}, N \ge 1} | D^{\alpha} \rho_N (y)| < \infty, \qquad \forall \, \alpha \in \mathbb{N}^{k + 2}.
\end{align}
Denote $\phi_h^N := \phi_h \rho_N$. Then $\phi_h^N(Y_n) \to \phi_h(Y_n)$ almost surely as $N \to \infty$, and $\|\phi_h^N(Y_n)\| \leq\|\phi_h(Y_n)\| = \|\bar{Y}_{T_n, t_{n + 1}}\| \in L^{\infty -}(\Omega)$ by Corollary \ref{numerical:momentbound}. It follows from the dominated convergence theorem that for any $p \ge 1$, 
\begin{align*}
\lim_{N \to \infty} \hE \big[ \|\phi_h^N(Y_n) - \phi_h(Y_n)\|^p \big] = 0. 
\end{align*}
Using $\phi_h^N \in C_0^{\infty}(\hR^{k + 2})$ and the induction hypothesis $Y_n \in \mathbb{D}^{1, \infty}(\hR^{k + 2})$ indicates $\phi_h^N(Y_n) \in \mathbb{D}^{1, \infty}(\hR^{k + 2})$ (see, e.g., \cite[Proposition 1.2.4]{NualartDavid2006}). By \cite[Lemma 1.5.3]{NualartDavid2006} and H\"older's inequality, to prove $\bar{Y}_{T_n, t_{n + 1}} \in \mathbb{D}^{1, \infty}(\hR^{k+2})$, it remains to show that for any $p \ge 1$, 
\begin{align} \label{DphiN}
\sup_{N \ge 1} \hE \big[ \|\mathcal{D} \phi_h^N(Y_n)\|^p_{\mathbb{H}^{\otimes k+2}} \big] <\infty.
\end{align}
Applying the chain rule yields that for a.e.\ $r \in [0, t_n]$, 
\begin{align*}
\mathcal{D}_{r} \phi_{h}^{N}(Y_n) = \nabla \phi_{h}(Y_n) \mathcal{D}_{r} Y_{n} \rho_{N}(Y_n) + \phi_{h}(Y_n) \nabla \rho_{N}(Y_n) \mathcal{D}_{r} Y_{n} . 
\end{align*}
Based on \eqref{Drho} and the inductive hypothesis $Y_{n} \in \mathbb{D}^{1, \infty}(\hR^{k + 2})$, the proof of \eqref{DphiN} reduces to showing that the gradient $\nabla \phi_{h}(Y_n)$ belongs to $L^{\infty-}(\Omega, \hR^{k + 2} \otimes \hR^{k + 2} )$. To analyze this gradient, we define
\begin{align*}
& F_{1}(x_{n}, \bar{x}_{T_{n}, t_{n + 1}}) := \int_{0}^{1} \nabla^{2} U(x_{n} + \xi(\bar{x}_{T_{n}, t_{n + 1}} - x_{n})) \xi \rd \xi, \\
& F_{2}(x_{n}, \bar{x}_{T_{n}, t_{n + 1}}) := \int_{0}^{1} \nabla^{2} U(x_{n} + \xi(\bar{x}_{T_{n}, t_{n + 1}} - x_{n}))(1 - \xi) \rd \xi.
\end{align*}
From \eqref{nabla2U}, we have $F_1 \wedge F_2 \ge -K/2$. By differentiating the relation \eqref{implicitavf} and applying the chain rule, one can express the gradient as
\begin{align} \label{nablaphih}
\nabla \phi_{h}(Y_{n}) = - \Big(\frac{\partial G}{\partial \bar{Y}_{T_n, t_{n + 1}}}\Big)^{-1} \frac{\partial G}{\partial Y_{n}} = - \frac{1}{\operatorname{det}(\frac{\partial G}{\partial \bar{Y}_{T_n, t_{n + 1}}})}\Big(\frac{\partial G}{\partial \bar{Y}_{T_n, t_{n + 1}}}\Big)^{*} \frac{\partial G}{\partial Y_{n}}, 
\end{align}
where
{\footnotesize 
\begin{align*}
\frac{\partial G}{\partial \bar{Y}_{T_n, t_{n + 1}}} = \left(\begin{array}{ccccc}
1 + \frac{\gamma h}{4} &- \frac{\lambda_1}{2} h &\cdots &- \frac{\lambda_k}{2} h &h F_1 \\
\frac{\lambda_1}{2} h &1 && 0 &\frac{\gamma \lambda_1}{4} h \\
\vdots &&\ddots &&\vdots \\
\frac{\lambda_k}{2} h & 0 &&1 &\frac{\gamma \lambda_k}{4} h \\
- \frac{h}{2} & 0 &\cdots & 0 &1 - \frac{\gamma h}{4}
\end{array}\right), 
\qquad
\frac{\partial G}{\partial Y_n} = - \left(\begin{array}{ccccc}
1 - \frac{\gamma h}{4} &\frac{\lambda_1}{2} h &\cdots &\frac{\lambda_k}{2} h &h F_2 \\
- \frac{\lambda_1}{2} h &1 && 0 &- \frac{\gamma \lambda_1}{4} h \\
\vdots &&\ddots &&\vdots \\
- \frac{\lambda_k}{2} h & 0 &&1 &- \frac{\gamma \lambda_k}{4} h \\
\frac{h}{2} & 0 &\cdots & 0 &1 - \frac{\gamma h}{4}
\end{array}\right). 
\end{align*}
}
By a direct calculation, 
\begin{align*} 
\operatorname{det}\left( \frac{\partial G}{\partial \bar{Y}_{T_n, t_{n + 1}}} \right) 
 = 1 + h^{2} \left( \frac{1}{4} \sum_{\ell = 1}^{k} \lambda_{\ell}^{2} + F_{1} - \frac{\gamma^{2}}{16} \right). 
\end{align*}
One can choose $h \leq \sqrt{(\frac{\gamma^{2}}{16} + \frac{K^{2}}{2}) / 2}$ to ensure $1 + h^{2}(\frac{1}{4} \sum_{\ell = 1}^{k} \lambda_{\ell}^{2} + F_{1} - \frac{\gamma^{2}}{16}) \ge 1 - h^{2}(\frac{\gamma^{2}}{16} + \frac{K}{2}) \ge \frac{1}{2}$. Consequently, 
\begin{align} \label{redetG}
\frac{1}{\operatorname{det}\left( \frac{\partial G}{\partial \bar{Y}_{T_n, t_{n + 1}}}\right)} 
\leq 1 + \frac{h^{2}(\frac{\gamma^{2}}{16} + \frac{K}{2})}{1 - h^{2}(\frac{\gamma^{2}}{16} + \frac{K}{2})} \leq 1 + h^{2} \left( \frac{\gamma^{2}}{8} + K \right).
\end{align}
The adjugate matrix $\big(\frac{\partial G}{\partial \bar{Y}_{T_n, t_{n + 1}}}\big)^{*}$ can be written explicitly as 
\begin{align*}
\left(\frac{\partial G}{\partial \bar{Y}_{T_n, t_{n + 1}}}\right)^{*} 
 = \left(\begin{array}{ccccc}
1 - \frac{\gamma}{4} h &\frac{h}{2}(1 - \frac{\gamma}{4} h) \lambda_1 &\cdots &\frac{h}{2}(1 - \frac{\gamma}{4} h) \lambda_k &- hF_1 - \frac{\gamma}{8} h^{2} \sum_{\ell = 1}^{k} \lambda_{\ell}^{2} \\
- \frac{h}{2} \lambda_1 &1 - \frac{h^2}{4} \lambda_1^2 &&- \frac{h^2}{4} \lambda_1 \lambda_k &- h \lambda_1(\frac{\gamma}{4} + \frac{\gamma^2}{16} h - \frac{h}{2} F_1) \\
\vdots &&\ddots &&\vdots \\
- \frac{h}{2} \lambda_k &- \frac{h^2}{4} \lambda_k \lambda_1 &&1 - \frac{h^2}{4} \lambda_k^{2} &- h \lambda_k(\frac{\gamma}{4} + \frac{\gamma^2}{16} h - \frac{h}{2} F_1) \\
\frac{h}{2} &\frac{h^2}{4} \lambda_1 &\cdots &\frac{h^2}{4} \lambda_k &1 + \frac{\gamma}{4} h + \frac{h^2}{4} \sum_{\ell = 1}^{k} \lambda_{\ell}^{2}
\end{array}\right).
\end{align*}
Meanwhile, for the 1-norm of the matrices $\frac{\partial G}{\partial Y_n}$ and $\Big(\frac{\partial G}{\partial \bar{Y}_{T_n, t_{n + 1}}}\Big)^{*}$, one has 
\begin{align}
&\Big\|\frac{\partial G}{\partial Y_n}\Big\|_1 \notag 
\leq \max \Big\{|1 - \frac{\gamma h}{4}| + h \sum_{\ell = 1}^k \frac{\lambda_{\ell}}{2} + \frac{h}{2}, \max_{1\le \ell \le k} \big\{1 + \frac{\lambda_{\ell} h}{2}\big\}, h| F_2| + h \sum_{\ell = 1}^h \frac{\gamma \lambda_{\ell}}{4} + |1 - \frac{\gamma h}{4}|\Big\}\notag \\
&\qquad\quad\, \ \leq 1 + h(C + |F_2|), \label{GYnorn} \\
&\Big\|\Big(\frac{\partial G}{\partial \bar{Y}_{T_n, t_{n + 1}}}\Big)^{*}\Big\|_{1} \notag \\
&\leq \max \bigg\{|1 - \frac{\gamma}{4} h| + h \sum_{\ell = 1}^{k} \frac{\lambda_{\ell}}{2} + \frac{h}{2}, \max_{1\le j \le k} \Big\{\frac{\lambda_{j} h}{2}|1 - \frac{\gamma}{4} h| + |1 - \frac{h^{2}}{4} \lambda_{j}^{2}| + h^{2} \sum_{1 \le \ell \le k, \ell \neq j} \frac{\lambda_{\ell} \lambda_{j}}{4} + h^{2} \frac{\lambda_{j}}{4}\Big\}, \notag \\
&\qquad\qquad\ |h F_{1} + h^{2} \frac{\gamma}{8} \sum_{\ell = 1}^{k} \lambda_{\ell}^{2}| + h \sum_{\ell = 1}^{k} \lambda_{\ell}|\frac{\gamma}{4} + \frac{\gamma^{2}}{16} h - \frac{h}{2} F_{1}| + 1 + \frac{\gamma}{4} h + \frac{h^{2}}{4} \sum_{\ell = 1}^{k} \lambda_{\ell}^{2}\bigg\} \notag \\
&\leq 1 + h(C + |F_{2}| + h(C + |F_{1}|)). \label{GYTnorm}
\end{align}
Combining the estimates \eqref{redetG}, \eqref{GYnorn}, and \eqref{GYTnorm} with Assumption \ref{ass.1} and Corollary \ref{numerical:momentbound} concludes that each component of $\nabla \phi_{h}(Y_n)$ belongs to $L^{\infty -}(\Omega)$. This establishes that $\bar{Y}_{T_{n}, t_{n + 1}} \in \mathbb{D}^{1, \infty}(\hR^{k + 2})$. Using \cite[Proposition 1.5.1]{NualartDavid2006} and taking the Malliavin derivative to \eqref{implicitavf} show that for a.e.\ $r \in[0, t_n]$, 
\begin{align*}
\mathcal{D}_{r} \bar{Y}_{T_{n}, t_{n + 1}} = - \Big(\frac{\partial G}{\partial \bar{Y}_{T_n, t_{n + 1}}}\Big)^{-1} \frac{\partial G}{\partial Y_{n}}\mathcal{D}_{r} Y_{n} = - \frac{1}{\operatorname{det}(\frac{\partial G}{\partial \bar{Y}_{T_n, t_{n + 1}}})}\Big(\frac{\partial G}{\partial \bar{Y}_{T_n, t_{n + 1}}}\Big)^{*} \frac{\partial G}{\partial Y_{n}}\mathcal{D}_{r} Y_{n}, 
\end{align*} 
while for a.e.\ $r \in(t_{n}, t_{n + 1}]$, $\mathcal{D}_{r} \bar{Y}_{T_{n}, t_{n + 1}} = 0$. Finally, recalling \eqref{sdesolu} yields that for a.e.\ $r \in[0, t_{n}]$, 
\begin{align} \label{DrYn1}
\mathcal{D}_{r} Y_{n + 1} = M \mathcal{D}_{r} \bar{Y}_{T_{n}, t_{n + 1}} = A_{n} \mathcal{D}_{r} Y_{n}, 
\end{align} 
where $A_{n} = - M \big( \frac{\partial G}{\partial \bar{Y}_{T_n, t_{n + 1}}} \big)^{-1} \frac{\partial G}{\partial Y_{n}}$ with
\begin{align} \label{Mmatrix}
M = \left(\begin{array}{ccccc}
e^{-\frac{\gamma}{2} h} & 0 &\cdots & 0 & 0 \\
0 &e^{-\alpha_{1} h}&& 0 &\frac{\gamma \lambda_{1}}{2 \alpha_{1} - \gamma} ( e^{-\frac{\gamma}{2} h} - e^{-\alpha_{1} h}) \\
\vdots &&\ddots &&\vdots \\
0 & 0 &&e^{-\alpha_{k} h} &\frac{\gamma \lambda_{k}}{2 \alpha_{k} - \gamma} ( e^{-\frac{\gamma}{2} h} - e^{-\alpha_{k} h}) \\
0 & 0 &\cdots & 0 &e^{-\frac{\gamma}{2} h}
\end{array}\right). 
\end{align}
For $r \in(t_{n}, t_{n + 1}]$, it follows from $\bar{Y}_{T_{n}, t_{n + 1}} \in \mathbb{D}^{1, \infty}(\hR^{k + 2})$ that
\begin{align} \label{DrYn2}
\mathcal{D}_{r} Y_{n + 1} = \text{diag} \left(e^{-\frac{\gamma}{2}(t_{n + 1} - r)} \sqrt{2 \gamma}, \, e^{-\alpha_{1}(t_{n + 1} - r)}\sqrt{2 \alpha_1}, \, \cdots, \, e^{-\alpha_{k}(t_{n + 1} - r)}\sqrt{2 \alpha_k}, \, 0\right), 
\end{align}
which implies $Y_{n + 1} \in \mathbb{D}^{1, \infty}(\hR^{k + 2})$. By a similar argument, one can show that $Y_{n + 1} \in \mathbb{D}^{\alpha, \infty}(\hR^{k + 2})$ for all integers $\alpha \ge 2$. The proof is completed. 
\end{proof}

For the numerical solution $\{Y_n\}$ generated by the splitting AVF method \eqref{numsolu}--\eqref{sdesolu}, we next give the following moment estimate for its Malliavin derivative, whose proof relies on a recursive argument and Proposition \ref{Numerical:expo} (i.e., the exponential integrability of numerical solutions), and is similar to that in \cite[Lemma 4.3]{CuiHongSheng2022}. The proof is omitted here.

\begin{lemma} \label{Numercal:Malliavinbound}
Let $\alpha \in \mathbb{N}_{+}$ and $p \geq 1$. Under Assumption \ref{ass.1}, there exists $h_0 \in (0 , h^*)$ such that for any $h \in(0, h_{0}]$, 
\begin{align*}
& \sup_{r_{1}, \cdots, r_{\alpha} \in[0, T]} \hE \big[ \sup_{r_{1} \vee \cdots \vee r_{\alpha} \leq t_{n} \leq T}\|\mathcal{D}_{r_{1}, \cdots, r_{\alpha}} \bar{Y}_{T_{n}, t_{n + 1}}\|^{p} \big] \leq C, \\
& \sup_{r_{1}, \cdots, r_{\alpha} \in[0, T]} \hE \big[ \sup_{r_{1}\vee \cdots \vee r_{\alpha}\leq t_n \leq T}\|\mathcal{D}_{r_{1}, \cdots, r_{\alpha}} Y_{n}\|^{p} \big] \leq C, 
\end{align*}
for some positive constant $C = C(\alpha, p, T)$. 
\end{lemma}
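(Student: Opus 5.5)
The plan is to exploit the linear recursion \eqref{DrYn1}--\eqref{DrYn2} for the first Malliavin derivative and then handle higher orders by induction on $\alpha$. For $\alpha = 1$, fix $r \in [0,T]$ and let $n_r$ be the index with $r \in (t_{n_r - 1}, t_{n_r}]$. By \eqref{DrYn2}, $\mathcal{D}_r Y_{n_r}$ is a deterministic diagonal matrix bounded uniformly in $h$. For $n \ge n_r$, iterating \eqref{DrYn1} gives
\begin{align*}
\mathcal{D}_r Y_n = A_{n-1} A_{n-2} \cdots A_{n_r} \mathcal{D}_r Y_{n_r}.
\end{align*}
Also $\mathcal{D}_r \bar{Y}_{T_n,t_{n+1}} = M^{-1} A_n \mathcal{D}_r Y_n$, with $M^{-1}$ bounded. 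It therefore suffices to bound $\sup_{n}\|\prod_{j=n_r}^{n-1} A_j\|_1$ in every $L^p(\Omega)$, uniformly in $r$ and $h$.

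The key step is the one-step estimate
\begin{align*}
\|A_j\|_1 \le 1 + C h\big(1 + |F_1(x_j,\bar{x}_{T_j,t_{j+1}})| + |F_2(x_j,\bar{x}_{T_j,t_{j+1}})|\big),
\end{align*}
with $C$ independent of $h$. To get it, combine $\|M\|_1 \le 1 + Ch$, the determinant bound \eqref{redetG}, and the norm bounds \eqref{GYnorn} and \eqref{GYTnorm}. Here $h \le h_0$ is small enough for \eqref{redetG} to hold. Note that \eqref{GYTnorm} carries $h|F_2| + h^2|F_1|$, so the product of factors gives $1 + Ch(1+|F_1|+|F_2|)$ up to terms of order $h^2(1+|F_1|+|F_2|)^2$. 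Those quadratic terms I absorb using $h|F_i| \le$ small on the relevant event, or else keep them as $h^2|F_i|^2$. By \eqref{secondu}, $|F_1|+|F_2| \le C(1 + |x_j|^{2m-\epsilon} + |\bar{x}_{T_j,t_{j+1}}|^{2m-\epsilon})$. Using $1 + a \le e^{a}$, I obtain
\begin{align*}
\sup_{n_r \le n \le T/h}\|\mathcal{D}_r Y_n\| \le C \exp\Big( C h\sum_{j=0}^{T/h-1}\big(1 + |x_j|^{2m-\epsilon} + |\bar{x}_{T_j,t_{j+1}}|^{2m-\epsilon}\big)\Big).
\end{align*}
Taking the supremum over $n$ inside costs nothing, since the bound is monotone in $n$.

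The main obstacle is the $L^p$-integrability of this exponential. I would apply Jensen's inequality to the average $h\sum_j$, exactly as in the proof of Lemma \ref{1/2strongconver}. This reduces the task to bounding $\sup_j \mathbb{E}\exp(pCT|x_j|^{2m-\epsilon})$ and $\sup_j \mathbb{E}\exp(pCT|\bar{x}_{T_j,t_{j+1}}|^{2m-\epsilon})$. Because $2m - \epsilon < 2m$, Young's inequality gives $pCT|x|^{2m-\epsilon} \le \delta |x|^{2m} + C_\delta$ for any $\delta > 0$. Choosing $\delta$ below $K_0 C_1 e^{-\eta T}$, this is controlled by $\exp(\widetilde{H}(Y_j)e^{-\eta t_j})$, since $\widetilde H \ge K_0 C_1|x|^{2m} - C$. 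Proposition \ref{Numerical:expo} then bounds it. For $\bar{Y}$, I use that $\widetilde{H}(\bar{Y}_{T_j,t_{j+1}}) = \widetilde{H}(Y_j)$, because the AVF step preserves $H$. The residual $h^2|F|^2$ terms contribute $h\cdot h|x|^{4m-2\epsilon}$ to the exponent. Since $4m - 2\epsilon$ may exceed $2m$, I would avoid them by bounding $(1+a)(1+b) \le e^{a+b}$ factorwise, keeping the structure \eqref{GYTnorm} as $1 + h(C+|F_2|) + h^2|F_1|$. For the $h^2|F_1|$ term, I use $h^2|F_1| \le h|F_1|$.

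For $\alpha \ge 2$, I proceed by induction. Differentiate \eqref{DrYn1}: the $\alpha$-th derivative $\mathcal{D}_{r_1,\dots,r_\alpha}Y_{n+1}$ equals $A_n\mathcal{D}_{r_1,\dots,r_\alpha}Y_n$ plus a remainder $R_n$. The remainder involves derivatives of $\phi_h$ and $M$ of order up to $\alpha$, which are polynomially bounded in $(Y_n,\bar Y)$ via the implicit function theorem and $U\in C_p^\infty$. It also involves products of lower-order Malliavin derivatives, already bounded by the induction hypothesis, and each $R_n$ carries a factor $h$ coming from the $h$ in front of $\nabla^k U$. A discrete Duhamel formula then gives
\begin{align*}
\mathcal{D}_{r_1,\dots,r_\alpha}Y_n = \Big(\prod A\Big)\mathcal{D}_{r_1,\dots,r_\alpha}Y_{n_0} + \sum_j \Big(\prod_{i>j} A_i\Big)R_j,
\end{align*}
where $n_0$ corresponds to the largest $r_i$. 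Combining this with the exponential bound above, Hölder's inequality, and Corollary \ref{numerical:momentbound} closes the induction.
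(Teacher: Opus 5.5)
Your proposal is correct and follows the route the paper indicates for this omitted proof: the recursive argument of \cite[Lemma 4.3]{CuiHongSheng2022} combined with the exponential integrability of Proposition~\ref{Numerical:expo}, carried out through the factorwise bound $\prod_j\|A_j\|_1\le\exp\big(Ch\sum_j(1+|F_1|+|F_2|)\big)$, then Jensen and Young (using $2m-\epsilon<2m$ and $H(\bar{Y}_{T_j,t_{j+1}})=H(Y_j)$), and for $\alpha\ge 2$ a discrete Duhamel formula whose remainders are $O(h)$. The only slips are cosmetic: the one-step bound $\|A_j\|_1\le 1+Ch(1+|F_1|+|F_2|)$ does not follow literally from multiplying the norm bounds, but your factorwise $e^{a+b}$ version is the correct statement, and because of the $\frac{\gamma}{2}vx$ term one only has $\widetilde{H}\ge \frac{K_0C_1}{2}|x|^{2m}-C$, which is harmless since $\delta$ is arbitrary.
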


We denote by $\gamma_{n} = \int_{0}^{t_{n}} \mathcal{D}_{r} Y_{n}(\mathcal{D}_{r} Y_{n})^{\top} \rd r$ the Malliavin covariance matrix of $Y_{n}$. To verify whether $\gamma_{n}$ is invertible, we proceed to establish a recursive relation between $\gamma_{n + 1}$ and $\gamma_{n}$. As a direct consequence of \eqref{DrYn1} and \eqref{DrYn2}, it follows that
\begin{align} \label{gamman1}
\gamma_{n + 1} &= \int_{0}^{t_{n + 1}} \mathcal{D}_{r} Y_{n + 1}(\mathcal{D}_{r} Y_{n + 1})^{\top} \rd r \notag \\
&= \int_{0}^{t_{n}} \mathcal{D}_{r} Y_{n + 1}(\mathcal{D}_{r} Y_{n + 1})^{\top} \rd r + \int_{t_{n}}^{t_{n + 1}} \mathcal{D}_{r} Y_{n + 1}(\mathcal{D}_{r} Y_{n + 1})^{\top} \rd r \notag \\
&= A_{n} \gamma_{n} A_{n}^{\top} + \gamma_{1} \quad \text {a.s., }
\end{align}
where $\gamma_{1} = \mbox{diag} \left(
2 - 2 e^{-\gamma h}, \, 1 - e^{-2 \alpha_{1} h}, \, 
\cdots, \, 1 - e^{-2 \alpha_{k} h}, \, 0 \right)$. Following \cite[Theorem 4.5]{CuiHongSheng2022}, the Malliavin covariance matrix $\gamma_n$ of $Y_n$ is invertible a.s.\ for $n=2, \ldots, T/h$, which implies that its smallest eigenvalue satisfies $\lambda_{\min }\left(\gamma_{n}\right)>0$ a.s.\ for all $n=2, \ldots, T/h$.

We next demonstrate that the numerical solution $Y_{T/h}$ is uniformly nondegenerate with respect to a sufficiently small stepsize $h > 0$, and as a consequence of \cite[Theorem 2.1.4]{NualartDavid2006}, it admits a smooth probability density function.

\begin{proposition} [Uniform non-degeneracy] \label{covarianmatrix}
Let Assumption \ref{ass.1} hold. Then for any $1 \leq p<\infty$, there exists a positive constant $\nu(p)$ such that
\begin{align*}
\|\operatorname{det}(\gamma_{T/h})^{-1}\|_{L^{p}(\Omega)} = \cO(h^{-\nu(p)}), \qquad \mbox{as } h \to 0. 
\end{align*}
\end{proposition}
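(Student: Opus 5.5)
Write $N := T/h$. By the recursion \eqref{gamman1}, $\gamma_N \succeq \Phi_{N}\gamma_{N-2}' $-type terms, but the simplest route is to keep only the last two steps: since every term in the unrolled sum $\gamma_N=\sum_{j=1}^{N} A_{N-1}\cdots A_{j}\,\gamma_1\,A_{j}^\top\cdots A_{N-1}^\top$ is positive semidefinite,
\begin{align*}
\gamma_N \succeq \gamma_1 + A_{N-1}\gamma_1 A_{N-1}^\top =: \Lambda_N .
\end{align*}
Then $\det(\gamma_N)\ge\det(\Lambda_N)$, so it suffices to bound $\|\det(\Lambda_N)^{-1}\|_{L^p(\Omega)}$ polynomially in $h^{-1}$. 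The key structural point is that $\gamma_1$ is degenerate only in the $x$-direction (last coordinate), while $A_{N-1}=-M(\partial G/\partial\bar Y)^{-1}\partial G/\partial Y_n$ transports the $v$-direction into the $x$-direction with a coefficient of size $\approx h$ (from the entries $\pm h/2$ in the last rows of the two Jacobians and $e^{-\gamma h/2}$ in $M$). This mirrors the Hörmander bracket $[\partial_v, v\partial_x]$ at the discrete level, as in \cite[Theorem 4.5]{CuiHongSheng2022}.

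Concretely, I would expand $A_{N-1}=\mathrm{Id}+hB_{N-1}$, where $B_{N-1}$ has entries bounded by $C(1+|F_1|+|F_2|)$ evaluated at $(x_{N-1},\bar x_{T_{N-1},t_N})$, using \eqref{redetG}, \eqref{GYnorn}, \eqref{GYTnorm}, and explicitly compute the $(k+2,1)$ entry $b:=(A_{N-1})_{k+2,1}= h(1+O(h)(1+|F_1|+|F_2|))$. For a unit vector $\xi=(\xi_v,\xi_z,\xi_x)$, we have
\begin{align*}
\xi^\top\Lambda_N\xi \ge c h\,(|\xi_v|^2+\|\xi_z\|^2) + c h\,|(A_{N-1}^\top\xi)_v|^2 + \ldots,
\end{align*}
since $\gamma_1\succeq ch\,\mathrm{diag}(1,\dots,1,0)$ for small $h$. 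If $|\xi_x|^2\le 1/2$, the first term already gives $\ge ch/2$. Otherwise $(A_{N-1}^\top\xi)_v=\xi_v(1+O(h\,\cdot))+b\,\xi_x+O(h(1+|F|))\|\xi_z\|$, and a two-case argument (whether $|\xi_v|+\|\xi_z\|$ is smaller or larger than a multiple of $h$ times the random factor) yields
\begin{align*}
\lambda_{\min}(\Lambda_N)\ \ge\ c\,h^{3}\,\big(1+|F_1|+|F_2|\big)^{-2}
\end{align*}
on the event where $h(1+|F_1|+|F_2|)\le \delta$. Hence $\det(\Lambda_N)^{-1}\le C h^{-3(k+2)}(1+|F_1|+|F_2|)^{2(k+2)}$ there, and the moment of the polynomial factor is finite by \eqref{secondu} and Corollary \ref{numerical:momentbound}.

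The main obstacle is the complementary event $\{h(1+|F_1|+|F_2|)>\delta\}$, where the perturbative expansion of $A_{N-1}$ fails and $b$ might even vanish, since $F_1$ is unbounded. I would handle it by Hölder: $\|\det(\gamma_N)^{-1}\mathbf 1_{\{\cdot\}}\|_{L^p}\le \|\det(\gamma_N)^{-1}\|_{L^{2p}}\,\mathbb P(\cdot)^{1/(2p)}$. This requires an a priori (possibly huge but polynomial in $h^{-1}$) bound on $\|\det(\gamma_N)^{-1}\|_{L^{2p}}$, which I would try to obtain by keeping more steps of the sum, or by noting that $\gamma_1$ alone controls $k+1$ directions. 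An alternative is to avoid the split entirely by computing $b$ exactly from the explicit adjugate: the last-row/first-column entry is $\frac{1}{\det}\big(\tfrac h2(1-\tfrac{\gamma h}4)+\tfrac h2(1+\tfrac{\gamma h}4+\dots)\big)e^{-\gamma h/2}$ up to $F$-corrections of order $h^2 F_1$. Because $F_1\ge -K/2$ and $\det\ge 1/2$, the sign structure may keep $|b|\ge ch/(1+h^2|F_1|)$ globally. Meanwhile $\mathbb P(|F_1|+|F_2|>\delta/h)\le C_q h^{q}$ for all $q$ by Markov and Corollary \ref{numerical:momentbound}, so either route produces $\cO(h^{-\nu(p)})$.
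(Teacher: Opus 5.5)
Your skeleton is the paper's: keep only $\Lambda_N=\gamma_1+A_{N-1}\gamma_1A_{N-1}^\top$, use $\det(\gamma_N)^{-1}\le\lambda_{\min}(\Lambda_N)^{-(k+2)}$, split the unit sphere by the size of the non-$x$ components, and use the order-$h$ transport of the $v$-direction into the $x$-direction. This gives $\lambda_{\min}\gtrsim h^3$, hence $\nu(p)\le 3(k+2)$. (The paper splits at $\|y_1\|\ge h$ and absorbs the $A^{1,1}$ cross term by Young's inequality; you split at $|\xi_x|^2\le\frac12$ and then by a threshold proportional to $b$.)

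As written, however, the decisive lower bound on $b$ is not established. Your H\"older route is circular. It needs a polynomial bound on $\|\det(\gamma_N)^{-1}\|_{L^{2p}}$, which is the statement itself, and $\gamma_1$ alone is singular in the $x$-direction, so it supplies no such bound. Your second route rests on ``the sign structure may keep $|b|\ge ch/(1+h^2|F_1|)$'', with the numerator of $b$ described as carrying $F$-corrections of order $h^2F_1$. If such corrections were present they could cancel the leading $h$ when $|F_1|\sim h^{-1}$, and you would be thrown back on the first route.

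The missing fact is that the numerator of $b$ contains no $F$ at all. Write $J=\partial G/\partial\bar Y_{T_n,t_{n+1}}$ and $B=\partial G/\partial Y_n$. Since the last row of $M$ is $(0,\dots,0,e^{-\gamma h/2})$, one has $b=-e^{-\gamma h/2}(\det J)^{-1}\,r\,B_{\cdot,1}$, where $r$ is the last row of $\operatorname{adj}(J)$. The entries of $r$ are cofactors obtained by deleting the last column of $J$, which is the only place $F_1$ occurs. Hence $r=\big(\tfrac{h}{2},\tfrac{h^2}{4}\lambda_1,\dots,\tfrac{h^2}{4}\lambda_k,1+\tfrac{\gamma h}{4}+\tfrac{h^2}{4}\sum_\ell\lambda_\ell^2\big)$ is deterministic. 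Likewise $B_{\cdot,1}=\big(-(1-\tfrac{\gamma h}{4}),\tfrac{\lambda_1h}{2},\dots,\tfrac{\lambda_kh}{2},-\tfrac{h}{2}\big)^\top$ contains no $F_2$. In the product all $\cO(h^2)$ and $\cO(h^3)$ terms cancel, $r\,B_{\cdot,1}=-h$, so
\[
b=\frac{h\,e^{-\gamma h/2}}{\det J},\qquad \tfrac12\le\det J\le 1+Ch^2+h^2|F_1|
\]
for small $h$ (the lower bound uses $F_1\ge -K/2$). Thus $b\ge ch/(1+h^2|F_1|)$ pathwise, $b$ never vanishes, and no good/bad event split is needed.

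Nor do you need $1+\cO(h)$ or $\cO(h)$ bounds on the other first-column entries. Set $R:=1+\sum_{i\le k+1}|(A_{N-1})_{i,1}|$. Then $R\le C(1+|F_1|)$, because $(\det J)^{-1}\le 2$ and the cofactors are affine in $F_1$. Choose the threshold $|\xi_v|+\|\xi_z\|\gtrless c_0b/R$ with $c_0$ small. Both cases give $\xi^\top\Lambda_N\xi\ge c\,h\,b^2R^{-2}$, hence $\det(\gamma_N)^{-1}\le Ch^{-3(k+2)}(1+h^2|F_1|)^{2(k+2)}R^{2(k+2)}$, whose moments are bounded via \eqref{secondu} and Corollary~\ref{numerical:momentbound}.

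This exact structure is also what actually justifies the paper's bound $\|A^{2,1}_{T/h-1}\|\ge Ch$. That block equals $e^{-\gamma h/2}(h,\tfrac{h^2}{2}\lambda_1,\dots,\tfrac{h^2}{2}\lambda_k)$ and is deterministic, although the paper attributes the bound to moment estimates. Your random threshold also removes the paper's need for $\|A^{1,1}_{T/h-1}\|/\det J\le 1+Ch$ in its Young step. The paper obtains that bound from \eqref{GYnorn}--\eqref{GYTnorm}, even though those estimates contain the random factors $|F_1|$ and $|F_2|$.
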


\begin{proof}
In view of 
\begin{align} \label{detgamma}
\operatorname{det}(\gamma_{T/h})^{-1} \leq \lambda_{\min}(\gamma_{T/h})^{-(k + 2)}, 
\end{align}
It follows from \eqref{gamman1} that
\begin{align*}
\gamma_{T/h} &= A_{T/h - 1} \gamma_{T/h - 1} A_{T/h - 1}^{\top} + \gamma_{1} \\
&= \sum_{j = 0}^{T/h - 2} A_{T/h - 1} \cdots A_{j + 1}\left(\begin{array}{cc}
\sigma \sigma^{\top} & 0 \\
0 & 0
\end{array}\right) A_{j + 1}^{\top} \cdots A_{T/h - 1}^{\top} + \left(\begin{array}{cc}
\sigma \sigma^{\top} & 0 \\
0 & 0
\end{array}\right), 
\end{align*}
where $\sigma \sigma^{\top} = \gamma_{1}$. We rewrite 
\begin{align*}
A_{T/h - 1} = \frac{1}{\operatorname{det}(\frac{\partial G}{\partial \bar{Y}_{T_n, t_{n + 1}}})}
\left(\begin{array}{ll}
A_{T/h - 1}^{1, 1} &A_{T/h - 1}^{1, 2} \\
A_{T/h - 1}^{2, 1} &A_{T/h - 1}^{2, 2}
\end{array}\right), 
\end{align*}
where $A_{T/h - 1}^{1, 1}$ is a $(k + 1)\times (k + 1)$ matrix, $A_{T/h - 1}^{1, 2}$ is a $(k + 1)$-dimensional column vector, $A_{N^{h - 1}}^{2, 1}$ is a $(k + 1)$-dimensional row vector, and $A_{N^{h - 1}}^{2, 2}$ is a number. On the other hand, for any $y = (y_{1}^{\top}, y_{2}) \in \hR^{k + 2}$ with $\|y\| = 1$, where $y_{1}$ is a $(k + 1)$-dimensional column vector, and $y_{2}$ is a number, it holds that 
\begin{align*}
(y_{1}^{\top}, y_{2}) A_{T/h - 1}\left(\begin{array}{cc}
\sigma \sigma^{\top} & 0 \\
0 & 0
\end{array}\right) A_{T/h - 1}^{\top}\binom{y_{1}}{y_{2}} 
 = \Big(\frac{1}{\operatorname{det}(\frac{\partial G}{\partial \bar{Y}_{T_n, t_{n + 1}}})}\Big)^{2}\|y_{1}^{\top} A_{T/h - 1}^{1, 1} \sigma + y_{2}^{\top} A_{T/h - 1}^{2, 1} \sigma\|^{2}. 
\end{align*}
Combining the above equalities together, one gets 
\begin{align*}
\lambda_{\min}(\gamma_{T/h}) 
&= \min_{\substack{y = (y_1^\top, y_2^\top)^\top\in\hR^{k + 2} \\\|y\| = 1}}y^\top\gamma_{T/h}y \\
&\ge \min_{\substack{y = (y_1^\top, y_2^\top)^\top\in\hR^{k + 2} \\\|y\| = 1}}
\left(\begin{array}{c}y_1^\top, y_2^\top\end{array}
\right)
\left\{A_{T/h - 1}\left(\begin{array}{cc}\sigma\sigma^\top& 0 \\
0 & 0 \end{array}\right)A_{T/h - 1}^\top + \left(\begin{array}{cc}\sigma\sigma^\top& 0 \\
0 & 0 \end{array}\right)\right\}\left(\begin{array}{c}y_1 \\y_2\end{array}\right) \\
&= : \min_{\substack{y = (y_1^\top, y_2^\top)^\top\in\hR^{k + 2} \\
\|y\| = 1}} f(y), 
\end{align*}
where
\begin{align*}
f(y)
&= \Big(\frac{1}{\operatorname{det}(\frac{\partial G}{\partial \bar{Y}_{T_n, t_{n + 1}}})}\Big)^{2}\|y_{1}^{\top} A_{T/h - 1}^{1, 1} \sigma + y_{2}^{\top} A_{T/h - 1}^{2, 1} \sigma\|^{2} + \|y_1^\top\sigma\|^2 \\
&= \Big(\frac{1}{\operatorname{det}(\frac{\partial G}{\partial \bar{Y}_{T_n, t_{n + 1}}})}\Big)^{2}\Big[y_{1}^{\top} A_{T/h - 1}^{1, 1} \sigma \sigma^{\top}(A_{T/h - 1}^{1, 1})^{\top} y_{1} + y_{2} A_{T/h - 1}^{2, 1} \sigma \sigma^{\top}(A_{T/h - 1}^{2, 1})^{\top} y_{2} \\
&\quad + 2 y_{1}^{\top} A_{T/h - 1}^{1, 1} \sigma \sigma^{\top}(A_{T/h - 1}^{2, 1})^{\top} y_{2}\Big] + y_{1}^{\top} \sigma \sigma^{\top} y_{1}. 
\end{align*}
To proceed, we split the unit sphere of $\hR^{k + 2}$ into $\{\|y\| = 1, \|y_{1}\| \geq h\}$ and $\{\|y\| = 1, \|y_1\| < h\}$ being later determined. Then 
\begin{align} \label{mingamma}
\lambda_{\min}(\gamma_{T/h}) = \min_{\substack{y = (y_1^\top, y_2^\top)^\top\in\hR^{k + 2} \\\|y\| = 1}}\Big\{\min_{\|y_1\|\geq h} f(y), \min_{\|y_1\|< h} f(y)\Big\}. 
\end{align}
Here the lower bound of $\min_{\|y_1\|\geq h} f(y)$ follows from 
\begin{align} \label{minfy1}
\min_{\|y_{1}\| \geq h} f(y) \geq \min_{\|y_1\|\geq h} y_{1}^{\top} \sigma \sigma^{\top} y_{1} \geq \lambda_{\min}(\sigma \sigma^{\top}) h^{2}. 
\end{align}
To estimate the lower bound of the term $\min_{\|y_1\|< h} f(y)$, we set $h \leq 1$. It follows from $\|y_{1}\|^{2}<h^{2}$ and $\|y_{1}\|^{2} + \|y_{2}\|^{2} = 1$ that $\|y_{1}\|^{2}>h^{2}$. Using Young's inequality yields that for any $\varepsilon \in (0, 1)$, 
\begin{align}
f(y) 
&\geq (1 - \varepsilon)\Big(\frac{1}{\operatorname{det}(\frac{\partial G}{\partial \bar{Y}_{T_n, t_{n + 1}}})}\Big)^{2} y_{2} A_{T/h - 1}^{2, 1} \sigma \sigma^{\top}(A_{T/h - 1}^{2, 1})^{\top} y_{2} \notag \\
&\quad + (1 - \frac{1}{\varepsilon})\Big(\frac{1}{\operatorname{det}(\frac{\partial G}{\partial \bar{Y}_{T_n, t_{n + 1}}})}\Big)^{2} y_{1}^{\top} A_{T/h - 1}^{1, 1} \sigma \sigma^{\top}(A_{T/h - 1}^{11})^{\top} + y_{1}^{\top} \sigma \sigma^{\top} y_{1}. \label{fyieq}
\end{align}
According to \eqref{redetG}, \eqref{GYnorn}, \eqref{GYTnorm} and Corollary \ref{numerical:momentbound}, one obtains $\frac{1}{\operatorname{det}(\frac{\partial G}{\partial \bar{Y}_{T_n, t_{n + 1}}})} \leq 1 + h^{2}(\frac{\gamma^{2}}{8} + K)$ and $\|A_{T/h}^{1, 1}\| \leq 1 + C h$. Then by denoting $a(h) = [1 + h^{2}(\frac{\gamma^{2}}{8} + K)]^2 (1 + C h)^2$, 
\begin{align*}
\Big(\frac{1}{\operatorname{det}(\frac{\partial G}{\partial \bar{Y}_{T_n, t_{n + 1}}})}\Big)^{2}y_{1}^{\top} A_{T/h - 1}^{1, 1} \sigma \sigma^{\top}(A_{T/h - 1}^{1, 1})^{\top} y_{1}
&\leq \Big(\frac{1}{\operatorname{det}(\frac{\partial G}{\partial \bar{Y}_{T_n, t_{n + 1}}})}\Big)^{2} \lambda \max (\sigma \sigma^{\top})\|A_{T/h - 1}^{1, 1}\|^{2}\|y_{1}\|^{2} \\
&\leq \lambda \max (\sigma \sigma^{\top}) a(h)\|y_{1}\|^{2}, 
\end{align*}
which implies that for any $\varepsilon \in(0, 1)$, 
\begin{align*}
&\ (1 - \frac{1}{\varepsilon})\Big(\frac{1}{\operatorname{det}(\frac{\partial G}{\partial \bar{Y}_{T_n, t_{n + 1}}})}\Big)^{2} y_{1}^{\top} A_{T/h - 1}^{1, 1} \sigma \sigma^{\top}(A_{T/h - 1}^{1, 1})^{\top} y_{1} + y_{1}^{\top} \sigma \sigma^{\top} y_{1} \\
&\geq (1 - \frac{1}{\varepsilon}) \lambda_{\max}(\sigma \sigma^{\top}) a(h)\|y_{1}\|^{2} + \lambda_{\min}(\sigma \sigma^{\top})\|y_{1}\|^{2}.
\end{align*}
Based on the fact that $a(h)\rightarrow1$ as $h\rightarrow0$, there exists a sufficiently small $\hat{h} := \hat{h}(\gamma, K)\le 1$ such that $0<a(h)<2$ holds for all $h\le \hat{h}$. Moreover, $\frac{\lambda_{\max} \left(\sigma\sigma^\top\right)a(h)}{\lambda_{\max} \left(\sigma\sigma^\top\right)a(h) + \lambda_{\min}\left(\sigma\sigma^\top\right)}<\varepsilon<1$ holds for any $\varepsilon \in (\frac{2\lambda_{\max} \left(\sigma\sigma^\top\right)}{2\lambda_{\max} \left(\sigma\sigma^\top\right) + \lambda_{\min}\left(\sigma\sigma^\top\right)}, 1)$. Then one has
\begin{align} \label{Aieq}
(1 - \frac{1}{\varepsilon}) y_{1}^{\top} A_{T/h - 1}^{1, 1} \sigma \sigma^{\top}(A_{T/h - 1}^{1, 1})^{\top} y_{1} + y_{1}^{\top} \sigma \sigma y_{1} \geq 0. 
\end{align}
Next, taking 
\begin{align*}
\varepsilon_{0} = \frac{2 \lambda_{\max}(\sigma\sigma^{\top})}{2 \lambda_{\max}(\sigma\sigma^{\top}) + \lambda_{\min}(\sigma \sigma^{\top})}, \qquad 
h_{0} = \min \{h(\gamma, K), h^{*}, \tilde{h} \}, 
\end{align*}
and using \eqref{fyieq} and \eqref{Aieq} yield 
\begin{align*}
f(y) &\geq(1 - \varepsilon_{0}) y_{2} A_{T/h - 1}^{2, 1} \sigma \sigma^{\top}(A_{T/h - 1}^{2, 1})^{\top} y_{2}\Big(\frac{1}{\operatorname{det}(\frac{\partial G}{\partial \bar{Y}_{T_n, t_{n + 1}}})}\Big)^{2} \\
&\geq \frac{\lambda_{\min}(\sigma \sigma^{\top})}{2 \lambda_{\max}(\sigma \sigma^{\top}) + \lambda_{\min}(\sigma \sigma^{\top})} \lambda_{\min}(\sigma \sigma^{\top})\|A_{T/h - 1}^{2, 1} y_{2}\|^{2}\Big(\frac{1}{\operatorname{det}(\frac{\partial G}{\partial \bar{Y}_{T_n, t_{n + 1}}})}\Big)^{2}. 
\end{align*}
For convenience, we rewrite 
$(\frac{\partial G}{\partial \bar{Y}_{T_n, t_{n + 1}}})^{*} = \left( \begin{array}{ll}A^{1, 1} &A^{1, 2} \\ A^{2, 1} &A^{2, 2}\end{array} \right)$ and $
\frac{\partial G}{\partial Y_{n}} = \left( \begin{array}{ll}B^{1, 1} &B^{1, 2} \\
B^{2, 1} &B^{2, 2}
\end{array} \right)$. Here, $A^{1, 1}$ and $B^{1, 1}$ are $(k + 1) \times (k + 1)$-dimensional matrices, $A^{1, 2}$ and $B^{1, 2}$ are two $(k + 1)$-dimensional column vectors, $A^{2, 1} = h(\frac{1}{2}, \frac{h}{4} \lambda_{1}, \cdots, \frac{h}{4} \lambda_{2})$ and $B^{2, 1} = h(\frac{1}{2}, 0, \cdots 0)$ are two $(k + 1)$-dimensional row vectors, as well as $A^{2, 2}$ and $B^{2, 2}$ are two numbers. Then one gets 
\begin{align*}
\|A_{T/h - 1}^{2, 1}\| = e^{-\frac{\gamma}{2} h}\|A^{2, 1} B^{1, 1} + A^{2, 2} B^{2, 1}\|, 
\end{align*}
which together with Corollary \ref{numerical:momentbound} shows that $\|A_{T/h - 1}^{2, 1}\| \geq C h$ with $C>0$. Using \eqref{fyieq} and \eqref{Aieq} indicates that for any $\|y_{1}\| < h$ with $h \leq h_{0}$, 
\begin{align*}
f(y) 
\geq C h^{2}(1 - h^{2}) \left(1 + h^{2} \Big( \frac{1}{4} \sum_{\ell = 1}^{k} \lambda_{\ell}^{2} + F_{1} - \frac{\gamma^{2}}{16} \Big) \right)^{-2} \frac{(\lambda_{\min}(\sigma \sigma^{\top}))^{2}}{2 \lambda_{\max}(\sigma \sigma^{\top}) + \lambda_{\min}(\sigma \sigma^{\top})}. 
\end{align*}
Recalling \eqref{mingamma} and \eqref{minfy1} yields 
\begin{align*}
\lambda_{\min}(\gamma_{T/h}) \geq \min \Big\{\lambda_{\min}(\sigma \sigma^{\top}) h^{2}, \frac{C h^{2}(1 - h^{2}) (\lambda_{\min}(\sigma \sigma^{\top}))^{2}}{\left(1 + h^{2} \Big( \frac{1}{4} \sum_{\ell = 1}^{k} \lambda_{\ell}^{2} + F_{1} - \frac{\gamma^{2}}{16} \Big) \right)^{2} (2 \lambda_{\max}(\sigma \sigma^{\top}) + \lambda_{\min}(\sigma \sigma^{\top}))}\Big\}, 
\end{align*}
which implies 
\begin{align*}
\lambda_{\min}^{-1}(\gamma_{T/h}) \leq \max \Big\{\frac{1}{\lambda_{\min (\sigma \sigma^{\top}) h^{2}}}, \frac{(1 + h^{2}(\frac{1}{4} \sum_{\ell = 1}^{k} \lambda_{\ell}^{2} + F_{1} - \frac{\gamma^{2}}{16}))^{2}(2 \lambda_{\max}(\sigma \sigma^{\top}) + \lambda_{\min}(\sigma \sigma^{\top}))}{C h^{2}(1 - h^{2})(\lambda\min (\sigma \sigma^{\top}))^{2}}\Big\}. 
\end{align*}
Thanks to Corollary \ref{numerical:momentbound}, $ \hE [|F_{1}(x_{T/h - 1}, \bar{x}_{T_{T/h - 1}, t_{T/h}})|^{p}] \leq C(p, T)$ holds for any $p \geq 1$. Based on the facts that $2 - 2 e^{-\gamma h} = \cO(h^{-1})$ and $1 - e^{-2 \alpha_{\ell} h} = \cO(h^{-1})$ as $h \to 0$, one obtains that 
\begin{align*}
 \hE |\lambda_{\min}^{-1}(\gamma_{T/h})|^{p} \leq C(p) \max \{h^{-2 p}, h^{-2 p}\} h^{-p}, \qquad \mbox{for } p \geq 1. 
\end{align*}
Finally, using \eqref{detgamma}, one concludes that $\|\operatorname{det}(\gamma_{T/h})^{-1}\|_{L^p(\Omega)} = \cO(h^{-\nu(p)})$ with $\nu(p) \leq 3 (k+2)$ as $h \to 0$. The proof is completed. 
\end{proof}

The following corollary is a direct result of Lemma \ref{Numercal:Malliavinbound}, Proposition \ref{covarianmatrix}, and \cite[Theorem 2.1.4]{NualartDavid2006}.

\begin{corollary} [Existence and smoothness of density]
Under Assumption \ref{ass.1}, the numerical solution $Y_{T/h}$ admits a smooth probability density function. 
\end{corollary}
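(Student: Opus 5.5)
The plan is to apply the standard criterion for the existence of a smooth density on Wiener space, namely \cite[Theorem 2.1.4]{NualartDavid2006}. It says that if $F \in \mathbb{D}^{\infty}(\hR^{d})$ and $(\det \gamma_F)^{-1} \in L^{\infty-}(\Omega)$, where $\gamma_F$ is the Malliavin covariance matrix of $F$, then the law of $F$ has a $C^\infty$ density. We take $F = Y_{T/h}$ with $d = k+2$ and fix a step size $h \in (0, h_0]$, where $h_0$ is small enough that Lemma \ref{Numercal:Malliavinbound} and Proposition \ref{covarianmatrix} both apply and the scheme is uniquely solvable by Proposition \ref{solvability}.

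\textbf{Step 1 (smoothness in the Malliavin sense).} Proposition \ref{Numerical:Malliavindifferentiability} already gives $Y_{T/h} \in \mathbb{D}^{\infty}(\hR^{k+2})$, since the initial value is deterministic and the induction runs over all $n \le T/h$. To make membership in every $\mathbb{D}^{\alpha,p}$ quantitative, we invoke Lemma \ref{Numercal:Malliavinbound}. Its uniform moment bounds on $\mathcal{D}_{r_1,\dots,r_\alpha} Y_n$, integrated over $[0,T]^\alpha$, give $\|Y_{T/h}\|_{\alpha,p} < \infty$ for every $\alpha \in \mathbb{N}_+$ and $p \ge 1$. The $L^p$ part of the norm comes from Corollary \ref{numerical:momentbound}.

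\textbf{Step 2 (non-degeneracy).} For the fixed $h$, Proposition \ref{covarianmatrix} gives $\|\det(\gamma_{T/h})^{-1}\|_{L^p(\Omega)} \le C(p) h^{-\nu(p)} < \infty$ for every $p \ge 1$. Hence $(\det \gamma_{T/h})^{-1} \in L^{\infty-}(\Omega)$. This also requires $\gamma_{T/h}$ to be a.s.\ invertible, which was noted after \eqref{gamman1} to hold for $n \ge 2$. So we need $T/h \ge 2$, which is automatic for $h$ small.

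\textbf{Step 3 (conclusion).} Combining Steps 1 and 2, \cite[Theorem 2.1.4]{NualartDavid2006} yields that $Y_{T/h}$ has a density in $C^\infty(\hR^{k+2})$.

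There is essentially no obstacle in this plan, since it only assembles earlier results. The only delicate point is bookkeeping: the step size must be restricted to $(0, h_0]$, where every earlier ingredient is valid (solvability, the Malliavin bounds, and the determinant estimate). We should also stress that the $h^{-\nu(p)}$ blow-up is harmless here, because $h$ is fixed. Uniformity in $h$ only matters later, for the convergence of the densities, not for their existence and smoothness.
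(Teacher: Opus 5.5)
Your proposal is correct and follows the same route as the paper. The paper likewise obtains the corollary directly by combining the Malliavin smoothness of $Y_{T/h}$ (Proposition~\ref{Numerical:Malliavindifferentiability}, quantified by Lemma~\ref{Numercal:Malliavinbound}) with the negative-moment bound on $\det(\gamma_{T/h})$ from Proposition~\ref{covarianmatrix}, and then applying \cite[Theorem 2.1.4]{NualartDavid2006}; your observation that the $h^{-\nu(p)}$ growth is harmless for a fixed step size is exactly right.
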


\subsection{Convergence rate of the probability density function}

This subsection is devoted to establishing an error estimate between the probability density function of the numerical solution and that of the exact solution. We begin by analyzing the error between their Malliavin derivatives.

\begin{theorem} \label{pdfconver}
Let $\alpha \in \mathbb{N}_{+}$ and $p \geq 1$. Under Assumption \ref{ass.1}, there exists $C = C(p, T, \alpha)$ and $h_0 \in (0 , h^*)$ such that for any $h \in(0, h_{0}]$, 
\begin{align} \label{Dconver}
\sup_{n \leq T/h}\|\mathcal{D}^{\alpha} Y_{n} - \mathcal{D}^{\alpha} Y(t_{n})\|_{L^{p}(\Omega, \, \mathbb{H}^{\otimes \alpha} \otimes \hR^{k + 2})} 
\leq C h.
\end{align}
\end{theorem}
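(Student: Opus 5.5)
We argue by induction on $\alpha$, reducing each order to a linear error recursion driven by the strong error of Theorem \ref{1strongconver}. Fix $\alpha = 1$ first. For $r \in (t_j, t_{j+1}]$, differentiating the SDE \eqref{GLE} gives the linear variational equation
\begin{align*}
\mathcal{D}_r Y(t) = \sigma_{\cdot}(r) + \int_r^t \nabla \mu(Y(s)) \mathcal{D}_r Y(s) \rd s, \qquad t \ge r,
\end{align*}
where $\sigma_{\cdot}(r)$ denotes the column of $\sigma$ attached to the Wiener component. On the numerical side, the analogous relations are \eqref{DrYn1} and \eqref{DrYn2}: $\mathcal{D}_r Y_{n+1} = A_n \mathcal{D}_r Y_n$ for $r \le t_n$, and $\mathcal{D}_r Y_{j+1}$ is given explicitly for $r \in (t_j, t_{j+1}]$. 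The first step compares the exact and numerical derivatives on the initial interval $(t_j,t_{j+1}]$. Both equal $\mathrm{diag}(e^{-\cdot(t_{j+1}-r)}\sqrt{\cdot})$ plus, for the exact solution, a drift correction of size $\cO(h)$ in $L^p$ by Proposition \ref{Malliavindifferentiability}. Hence $\|\mathcal{D}_r Y_{j+1} - \mathcal{D}_r Y(t_{j+1})\|_{L^p} \le Ch$.

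The second step propagates this error. We write $\mathcal{D}_r Y(t_{n+1}) = \Lambda_n \mathcal{D}_r Y(t_n)$, where $\Lambda_n$ is the exact derivative flow of \eqref{GLE} over $[t_n,t_{n+1}]$. Setting $E_n := \mathcal{D}_r Y_n - \mathcal{D}_r Y(t_n)$, we obtain
\begin{align*}
E_{n+1} = A_n E_n + (A_n - \Lambda_n)\mathcal{D}_r Y(t_n).
\end{align*}
The key local estimate to establish is $\|A_n - \Lambda_n\| \le C h \big(h + \|Y_n - Y(t_n)\|\big)\,\Theta_n$, where $\Theta_n$ is a polynomial in $|x_n|,|\bar x_{T_n,t_{n+1}}|,\sup_t|x(t)|$. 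This comes from a one-step consistency of the AVF-plus-OU derivative map with $I + h\nabla\mu(Y(t_n)) + \cO(h^2)$. We will expand $(\partial G/\partial \bar Y)^{-1}\partial G/\partial Y_n$ to second order in $h$ using \eqref{redetG}, \eqref{GYnorn} and \eqref{GYTnorm}, and expand $M$ in \eqref{Mmatrix} in the same way. The entries $F_1,F_2$ give $\tfrac12\nabla^2U(x_n)+\cO(|\bar x - x_n|)$, and $\nabla^2 U$ is locally Lipschitz by \eqref{secondu}. Unrolling the recursion gives
\begin{align*}
E_n = \sum_{j} A_{n-1}\cdots A_{j+1}(A_j-\Lambda_j)\mathcal{D}_rY(t_j) + A_{n-1}\cdots A_{j_r+1} E_{j_r+1}.
\end{align*}
We then bound $\|A_{n-1}\cdots A_{j+1}\| \le \exp\big(Ch\sum_i (1+|F_1^{(i)}|+|F_2^{(i)}|)\big)$, whose moments are finite by Proposition \ref{Numerical:expo} together with \eqref{secondu}, exactly as in Lemma \ref{1/2strongconver}. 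Applying H\"older's inequality with Theorem \ref{1strongconver}, Proposition \ref{Malliavindifferentiability} and Lemma \ref{Numercal:Malliavinbound} yields $\|E_n\|_{L^p}\le C\sum_j h\cdot h + Ch \le Ch$, uniformly in $r$. Integrating over $r\in[0,T]$ gives the $\mathbb{H}$-norm bound.

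For $\alpha \ge 2$ we differentiate the recursions again. $\mathcal{D}^\alpha Y_{n+1}$ satisfies $\mathcal{D}^{\alpha}Y_{n+1} = A_n \mathcal{D}^\alpha Y_n + R_n^{\alpha}$, where $R_n^\alpha$ collects terms with higher derivatives of $\phi_h$ contracted with lower-order Malliavin derivatives. The exact solution obeys the analogous relation with $\Lambda_n$ and the corresponding remainder. The error again satisfies a linear recursion with the same propagator, now with source $(A_n-\Lambda_n)\mathcal{D}^\alpha Y(t_n) + (R^\alpha_n - \tilde R^\alpha_n)$. The source is $\cO(h^2)$ per step by the induction hypothesis at orders $<\alpha$ and by local-Lipschitz bounds on higher derivatives of $U$ (since $U\in C_p^\infty$).

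The main obstacle is the local consistency estimate $A_n-\Lambda_n = \cO(h^2)+\cO(h\|Y_n-Y(t_n)\|)$ with only polynomial weights. It must hold despite the splitting: the deterministic part uses $\tfrac{\gamma}{2}$-shifted Hamiltonian terms that cancel only after composing with $M$. Moreover, the martingale part of $\Lambda_n$ contributes only $\cO(h^{3/2})$ pathwise, not $\cO(h^2)$. I will first try to avoid needing the martingale cancellation by comparing $A_n$ not with $\Lambda_n$ but with the variational equation of the time-continuous interpolant \eqref{continuousversion}, mirroring the proof of Theorem \ref{1strongconver}. If that fails, I will use the same discrete-martingale/BDG trick as for $K_{1,1}$ there to recover the full order one.
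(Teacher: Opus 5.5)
Your proposal is correct and follows essentially the paper's route: induction on $\alpha$, an $\cO(h)$ discrepancy on the step containing $r$, and a linear recursion for the derivative error whose per-step source is $\cO(h^2)$ by Theorem \ref{1strongconver}, closed by a discrete Gr\"onwall bound with exponential weights handled as in Lemma \ref{1/2strongconver}; you organize it via the numerical one-step Jacobian $A_n$ against the exact Jacobian flow $\Lambda_n$, whereas the paper differentiates the componentwise identities \eqref{vvgle}--\eqref{xxgle} directly. The obstacle you anticipate does not arise: $\nabla\mu(Y)$ depends on $Y$ only through $\nabla^2U(x)$, and $x$ has $C^1$ paths, so $\Lambda_n = I + h\nabla\mu(Y(t_n)) + \cO(h^2)$ with polynomial weights and no martingale contribution, which makes your fallback plans unnecessary (the local Lipschitz bound on $\nabla^2U$ you need comes from $U\in C_p^\infty$, not from \eqref{secondu}).
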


\begin{proof}
Based on the Lyapunov inequality, it suffices to establish \eqref{Dconver} for all integers $\alpha \geq 1$ and $p \geq 2$. To this end, we use an induction argument on $\alpha$. For the case $\alpha = 1$ and $p \geq 2$, using H\"older's inequality indicates 
\begin{align*}
\|\mathcal{D} Y_{n} - \mathcal{D} Y(t_{n})\|_{L^{p}(\Omega, \, \mathbb{H}^{\otimes \alpha} \otimes \hR^{k + 2})}^{p} 
&= \hE \left[ \Big| \int_{0}^{T}\|\mathcal{D}_{r_1} Y_{n} - \mathcal{D}_{r_1} Y(t_{n})\|^{2} \rd r_{1}\Big|^{\frac{p}{2}} \right] \\
&\leq C (p, T) \int_{0}^{T} \hE \big[ \|\mathcal{D}_{r_1} Y_{n} - \mathcal{D}_{r_1} Y(t_{n})\|^{p} \big] \rd r_{1}. 
\end{align*}
Then, we need to show that for a.e.\ $r_{1} \in[0, T]$, 
\begin{align*}
\sup_{n \leq T/h} \hE \big[ \|\mathcal{D}_{r_1} Y_{n} - \mathcal{D}_{r_1} Y(t_{n})\|^{p} \big] \leq C h^{p}. 
\end{align*}
Fix $r_{1} \in (t_{i}, t_{i + 1}]$ for some integer $0 \leq i \leq T/h - 1$. Taking the Malliavin derivatives on both sides of \eqref{vvgle} yields that for $i < n \leq T/h - 1$, 
\begin{align*}
\mathcal{D}_{r_1} v_{n + 1} - \mathcal{D}_{r_1} v(t_{n + 1}) 
&= e^{-\frac{\gamma}{2} h}(\mathcal{D}_{r_1} v_{n} - \mathcal{D}_{r_1} v(t_{n})) + e^{-\frac{\gamma}{2} h} \int_{t_{n}}^{t_{n + 1}} \frac{\gamma}{2}(\mathcal{D}_{r_1} v(t_{n}) - \mathcal{D}_{r_1} v_{n}) \rd t \\
&\quad + e^{-\frac{\gamma}{2} h} \int_{t_{n}}^{t_{n + 1}} \int_{0}^{1} \int_{0}^{1} \nabla^{2} U(\hat{x}_{n}^{t})(\mathcal{D}_{r_1} x(t_{n}) - \mathcal{D}_{r_1} x_{n}) \rd \theta \rd \xi \rd t \\
&\quad - e^{-\frac{\gamma}{2} h} \sum_{\ell = 1}^{k} \lambda_{\ell} \int_{t_{n}}^{t_{n + 1}}(\mathcal{D}_{r_1} z_{\ell}(t_{n}) - \mathcal{D}_{r_{1}} z_{n}^{\ell}) \rd t + \mathcal{S}_{n}^{1}, 
\end{align*}
where $\hat{x}_{n}^{t} := \theta x(t) + (1 - \theta)(x_{n} + \xi(\bar{x}_{T_{n}, t_{n + 1}} - x_{n}))$ and 
\begin{align*}
&\mathcal{S}_{n}^{1} = \mathcal{S}_{n}^{1, 1} + \mathcal{S}_{n}^{1, 2} + \mathcal{S}_{n}^{1, 3} + \mathcal{S}_{n}^{1, 4} + \mathcal{S}_{n}^{1, 5}, \\
&\mathcal{S}_{n}^{1, 1} = \int_{t_{n}}^{t_{n + 1}} \big( - e^{\frac{\gamma}{2} h} + e^{-\frac{\gamma}{2}(t_{n + 1} - t)} \big) \big(\frac{\gamma}{2} \mathcal{D}_{r_1} v(t) + \nabla^{2} U(x(t)) \mathcal{D}_{r_{1}} x(t) - \sum_{\ell = 1}^{k} \lambda_{\ell} \mathcal{D}_{r_{1}} z_{\ell}(t) \big) \rd t, \\
&\mathcal{S}_{n}^{1, 2} = e^{-\frac{\gamma}{2} h} \int_{t_{n}}^{t_{n + 1}} \int_{0}^{1} \int_{0}^{1} \mathcal{D}_{r_1}[\nabla^{2} U(\hat{x}_{n}^{t})](x(t_{n}) - x_{n}) \rd \theta \rd \xi \rd t, \\
&\mathcal{S}_{n}^{1, 3} = e^{-\frac{\gamma}{2} h} \int_{t_{n}}^{t_{n + 1}} \int_{t_{n}}^{t} - \frac{\gamma}{2} \big( \gamma \mathcal{D}_{r_1} v(s) + \nabla^{2} U(x(s)) \mathcal{D}_{r_1} x(s) - \sum_{\ell = 1}^{k} \lambda_{\ell} \mathcal{D}_{r_1} z_{\ell}(s) \big) \rd s \rd t \\
&\qquad\quad - e^{-\frac{\gamma}{2} h} \frac{\gamma}{4} \int_{t_{n}}^{t_{n + 1}} \mathcal{D}_{r_{1}} \mathcal{A}^n \rd t, \\
&\mathcal{S}_{n}^{1, 4} = e^{-\frac{\gamma}{2} h} \int_{t_{n}}^{t_{n + 1}} \int_{0}^{1} \int_{0}^{1} \mathcal{D}_{r_1} \left[ \nabla^{2} U(\hat{x}_{n}^{t}) \Big( \int_{t_n}^{t} v(s) \rd s - \xi \mathcal{C}^{n} \Big) \right] \rd \theta \rd \xi \rd t, \\
&\mathcal{S}_{n}^{1, 5} = - e^{-\frac{\gamma}{2} h} \sum_{\ell = 1}^{k} \lambda_{\ell} \int_{t_n}^{t_{n + 1}} \int_{t_n}^{t} \big( \alpha_{\ell} \mathcal{D}_{r_1} z_{\ell}(s) + \lambda_{\ell} \mathcal{D}_{r_1} v(s) \big) \rd s \rd t - \frac{1}{2} e^{-\frac{\gamma}{2} h} \sum_{\ell = 1}^{k} \lambda_{\ell} \int_{t_{n}}^{t_{n + 1}} \mathcal{D}_{r_1} \mathcal{B}_{\ell}^{n} \rd t.
\end{align*}
Taking the Malliavin derivative on both sides of \eqref{zzgle} yields that for $i < n \leq T/h - 1$, 
\begin{align*}
\mathcal{D}_{r_{1}} z_{n + 1}^{\ell} - \mathcal{D}_{r_1} z_{\ell}(t_{n + 1}) 
&= e^{-\alpha_{\ell} h}(\mathcal{D}_{r_1} z_{n}^{\ell} - \mathcal{D}_{r_1} z_{\ell}(t_{n})) + e^{-\alpha_{\ell} h} \int_{t_{n}}^{t_{n + 1}} \lambda_{\ell}(\mathcal{D}_{r_1} v (t_{n}) - \mathcal{D}_{r_1} v_{n}) \rd t \\
&\quad + e^{-\alpha_{\ell} h} \int_{t_n}^{t_{n + 1}} \frac{\gamma}{2} \lambda_{\ell}(\mathcal{D}_{r_1} x(t_{n}) - \mathcal{D}_{r_1} x_{n}) \rd t + \mathcal{S}_{n, \ell}^{2}, 
\end{align*}
where
\begin{align*}
&\mathcal{S}_{n, \ell}^{2} = \mathcal{S}_{n, \ell}^{2, 1} + \mathcal{S}_{n, \ell}^{2, 2} + \mathcal{S}_{n, \ell}^{2, 3} + \mathcal{S}_{n, \ell}^{2, 4}, \\
&\mathcal{S}_{n, \ell}^{2, 1} = \int_{t_{n}}^{t_{n + 1}}( - e^{-\alpha_{\ell} h} + e^{-\alpha_{\ell}(t_{n + 1} - t)})(\lambda_{\ell} \mathcal{D}_{r_{1}} v(t) + \frac{\gamma}{2} \lambda_{\ell} \mathcal{D}_{r_1} x(t)) \rd t, \\
&\mathcal{S}_{n, \ell}^{2, 2} = - e^{-\alpha_{\ell} h} \lambda_{\ell} \int_{t_n}^{t_{n + 1}} \int_{t_n}^{t} \big( \gamma \mathcal{D}_{r_1} v(s) + \nabla^{2} U(x(s)) \mathcal{D}_{r_1} x(s) - \sum_{\ell = 1}^{k} \lambda_{\ell} \mathcal{D}_{r_1} z_{\ell}(s) \big) \rd s \rd t \\
&\qquad\quad - e^{-\alpha_{\ell} h} \frac{\lambda_{\ell}}{2} \int_{t_{n}}^{t_{n + 1}} \mathcal{D}_{r_{1}} \mathcal{A}^{n} \rd t, \\
&\mathcal{S}_{n, \ell}^{2, 3} = - e^{-\alpha_{\ell} h} \frac{\gamma \lambda_{\ell}}{2} \int_{t_n}^{t_{n + 1}} \int_{t_n}^{t} \mathcal{D}_{r_1} v(s) \rd s \rd t - e^{-\alpha_{\ell} h} \frac{\gamma \lambda_{\ell}}{4} \int_{t_{n}}^{t_{n + 1}} \mathcal{D}_{r_1} \mathcal{C}^{n} \rd t, \\
&\mathcal{S}_{n, \ell}^{2, 4} = \int_{t_{n}}^{t_{n + 1}} e^{-\alpha_{\ell}(t_{n + 1} - t)} \frac{\gamma\lambda_{\ell}}{2} \mathcal{D}_{r_{1}}(\tilde{x}_{T_{n}, t} - x(t)) \rd t.
\end{align*}
Taking the Malliavin derivatives on the both sides of \eqref{xxgle} yields that for $i < n \leq T/h - 1$, 
\begin{align*}
\mathcal{D}_{r_1} x_{n + 1} - \mathcal{D}_{r_1} x(t_{n + 1}) 
&= e^{-\frac{\gamma}{2} h}(\mathcal{D}_{r_1} x_{n} - \mathcal{D}_{r_1} x(t_{n})) - e^{-\frac{\gamma}{2} h} \int_{t_n}^{t_{n + 1}} \frac{\gamma}{2}(\mathcal{D}_{r_{1}} x(t_{n}) - \mathcal{D}_{r_{1}} x_{n}) \rd t \\
&\quad - e^{-\frac{\gamma}{2} h} \int_{t_n}^{t_{n + 1}}(\mathcal{D}_{r_1} v(t_{n}) - \mathcal{D}_{r_1} v_{n}) \rd t + \mathcal{S}_{n}^{3}, 
\end{align*}
where
\begin{align*}
&\mathcal{S}_{n}^{3} = \mathcal{S}_{n}^{3, 1} + \mathcal{S}_{n}^{3, 2} + \mathcal{S}_{n}^{3, 3}, \\
&\mathcal{S}_{n}^{3, 1} = - \int_{t_n}^{t_{n + 1}}( - e^{-\frac{\gamma}{2} h} + e^{-\frac{\gamma}{2}(t_{n + 1} - t)})(\frac{\gamma}{2} \mathcal{D}_{r_{1}} x(t) + \mathcal{D}_{r_{1}} v(t)) \rd t, \\
&\mathcal{S}_{n}^{3, 2} = e^{-\frac{\gamma}{2} h} \frac{\gamma}{2} \int_{t_{n}}^{t_{n + 1}} \int_{t_{n}}^{t} \mathcal{D}_{r_{1}} v(s) \rd s \rd t + e^{-\frac{\gamma}{2} h} \frac{\gamma}{4} \int_{t_{n}}^{t_{n + 1}} \mathcal{D}_{r_{1}}\mathcal{C}^{n} \rd t, \\
&\mathcal{S}_{n}^{3, 3} = e^{-\frac{\gamma}{2} h} \int_{t_{n}}^{t_{n + 1}} \int_{t_n}^{t}(\gamma \mathcal{D}_{r_1} v(s) + \nabla^{2} U(x(s)) \mathcal{D}_{r_1} x(s) - \sum_{\ell = 1}^{k} \lambda_{\ell} \mathcal{D}_{r_1} z_{\ell}(s)) \rd s \rd t + \frac{1}{2}e^{-\frac{\gamma}{2} h} \int_{t_{n}}^{t_{n + 1}} \mathcal{D}_{r_{1}} \mathcal{A}^{n} \rd t. 
\end{align*}
By the triangle inequality, one gets 
\begin{align*}
&\|\mathcal{D}_{r_1} v_{n + 1} - \mathcal{D}_{r_1} v(t_{n + 1})\| \\
&\leq \|\mathcal{D}_{r_1} v_{n} - \mathcal{D}_{r_1} v(t_{n})\| + C h\|\mathcal{D}_{r_{1}} v_{n} - \mathcal{D}_{r_{1}} v(t_{n})\| + C h\|\mathcal{D}_{r_{1}} z_{n}^{\ell} - \mathcal{D}_{r_{1}} z_{\ell}(t_{n})\| \\
&\quad + C \int_{t_n}^{t_{n+1}} \big( 1 + |x(t)|^{2 m-\epsilon} + |x_{n}|^{2 m-\epsilon} + |\bar{x}_{T_{n}, t_{n + 1}}|^{2 m-\epsilon} \big) \rd t \|\mathcal{D}_{r_{1}} x_{n} - \mathcal{D}_{r_{1}} x(t_{n})\| + \|\mathcal{S}_{n}^{1}\|, \\
&\|\mathcal{D}_{r_1} z_{n + 1}^{\ell} - \mathcal{D}_{r_1} z_{\ell}(t_{n + 1})\| \\
&\leq \|\mathcal{D}_{r_1} z_{n}^{\ell} - \mathcal{D}_{r_1} z_{\ell}(t_{n})\| + C h\|\mathcal{D}_{r_1} v_{n} - \mathcal{D}_{r_1} v(t_{n})\| + C h\|\mathcal{D}_{r_1} x_{n} - \mathcal{D}_{r_1} x(t_{n})\| + \|\mathcal{S}_{n, \ell}^{2}\|, \\
&\|\mathcal{D}_{r_{1}} x_{n + 1} - \mathcal{D}_{r_{1}} x(t_{n + 1})\| \\
&\leq \|\mathcal{D}_{r_1} x_{n} - \mathcal{D}_{r_1} x(t_{n})\| + C h\|\mathcal{D}_{r_1} x_{n} - \mathcal{D}_{r_1} x(t_{n})\| + C h\|\mathcal{D}_{r_1} v_{n} - \mathcal{D}_{r_1} v(t_{n})\| + \| \mathcal{S}_{n}^{3} \|. 
\end{align*}
For convenience, we denote
\begin{align*}
\mathcal{R}_{n + 1} := \|\mathcal{D}_{r_1} v_{n + 1} - \mathcal{D}_{r_1} v(t_{n + 1})\| + \sum_{\ell = 1}^{k}\|\mathcal{D}_{r_1} z_{n + 1}^{\ell} - \mathcal{D}_{r_1} z_{\ell}(t_{n + 1})\| + \|\mathcal{D}_{r_1} x_{n + 1} - \mathcal{D}_{r_1} x(t_{n + 1})\|. 
\end{align*}
Then it holds that 
\begin{align} \label{maliverror}
\mathcal{R}_{n + 1} 
\leq \mathcal{R}_{n} + C \mathcal{R}_{n} \int_{t_n}^{t_{n+1}} \big( 1 + |x(t)|^{2 m-\epsilon} + |x_{n}|^{2 m-\epsilon} + |\bar{x}_{T_{n}, t_{n + 1}}|^{2 m-\epsilon} \big) \rd t + \mathcal{S}_{n} 
\end{align}
with $\mathcal{S}_{n} = \| \mathcal{S}_{n}^{1} \| + \sum_{\ell = 1}^{k}\| \mathcal{S}_{n, \ell}^{2} \| + \| \mathcal{S}_{n}^{3} \|$. Recalling \eqref{xtidlex} and applying Gr\"onwall's inequality yield 
\begin{align*}
\|D_{n}(\tilde{x}_{T_{n}, t} - x(t))\| \leq C \Big( \|\mathcal{D}_{r_{1}}(x_{n} - x(t_{n}))\| + \int_{t_n}^{t}\|\mathcal{D}_{r_1}(\frac{\gamma}{2} x(s) + v(s))\| \rd s + \|\mathcal{D}_{r_{1}} \mathcal{C}^{n}\| \Big). 
\end{align*} 
Using H\"older's inequality, Propositions \ref{prop:GLE:well-posed}, \ref{Malliavindifferentiability} and \ref{numerical:momentbound} as well as Lemma \ref{Numercal:Malliavinbound} shows that for $\kappa = 1$, $\iota = 1, 2, 3, 4, 5$ as well as $\kappa = 3$, $\iota = 1, 2, 3$, 
\begin{align} \label{sn}
\|\mathcal{S}_{n}^{\kappa, \iota}\|_{L^{q}(\Omega)} \leq C(q) h^{2}, \qquad q \geq 1, \quad n = 0, 1, \ldots, T/h - 1. 
\end{align}
Moreover, for $\ell = 1, 2, \cdots, k$, $\kappa = 2$, $\iota = 1, 2, 3, 4$, 
\begin{align} \label{sln}
\| \mathcal{S}_{n, \ell}^{\kappa, \iota}\|_{L^q (\Omega)} \leq C(q) h^{2}, \qquad q \geq 1, \quad n = 0, 1, \ldots, T/h - 1. 
\end{align}
Consequently, 
\begin{align} \label{sumsn}
\hE \Big[\Big| \sum_{j = i + 1}^{n} \mathcal{S}_{j}\Big|^{q}\Big] \leq(n - i)^{q - 1} \sum_{j = i + 1}^{n} \hE [|\mathcal{S}_{j}|^{q}] \leq C(q) h^{q}, \qquad \forall\, q \geq 1. 
\end{align}
For $n = i$, the fact that $r_{1} \in(t_{i}, t_{i + 1}]$ yields $\mathcal{D}_{r_1} Y(t_{i}) = 0$ and $\mathcal{D}_{r_1}Y_{i} = 0$. It follows that for $r_{1} \in(t_{i}, t_{i + 1}]$, 
\begin{align*}
& \mathcal{D}_{r_1} v_{i + 1} - \mathcal{D}_{r_1} v(t_{i + 1}) 
= \mathcal{S}_{i}^{1} + e^{-\frac{\gamma}{2} h} \Big( \frac{\gamma}{2} \sqrt{2 \gamma} + \sum_{\ell = 1}^{k} \lambda_{\ell} \sqrt{2 \alpha_{\ell}} \Big) h, \\
& \mathcal{D}_{r_{1}} z_{i + 1}^{\ell} - \mathcal{D}_{r_{1}} z_{\ell}(t_{i + 1}) 
= \mathcal{S}_{i, \ell}^{2} + e^{-\alpha_{\ell} h} \lambda_{\ell} \sqrt{2 \gamma} h, \\
& \mathcal{D}_{r_1} x_{i + 1} - \mathcal{D}_{r_1} x(t_{i + 1}) 
= \mathcal{S}_{i}^{3} - e^{-\frac{\gamma}{2} h} \sqrt{2 \gamma} h. 
\end{align*}
In view of \eqref{sn}, \eqref{sln}, and the inequality $|t_{i + 1} - r_{1}|<h$, one gets 
\begin{align} \label{sumr}
\hE [\mathcal{R}_{i + 1}^{q}] \leq C(q) h^{q}, \qquad \forall\, q \geq 1. 
\end{align}
By the discrete Gr\"onwall inequality together with \eqref{maliverror}, one obtains that for all $n = 0, \ldots, T/h - 1$, 
\begin{align*}
\mathcal{R}_{n + 1}^{p} 
\leq C \Big[ \Big( \sum_{j = i + 1}^{n} \mathcal{S}_{j} \Big)^{p} + \mathcal{R}_{i + 1}^{p} \Big] \exp \Big( \sum_{j = i + 1}^{n} C p \int_{t_j}^{t_{j+1}} 1 + |x(t)|^{2 m-\epsilon} + |x_{j}|^{2 m-\epsilon} + |\bar{x}_{T_{j}, t_{j + 1}}|^{2 m-\epsilon} \rd t \Big). 
\end{align*}
Thus, the proof of \eqref{Dconver} for the case $\alpha = 1$ follows from Proposition \ref{Numerical:expo}, Corollary \ref{numerical:momentbound}, \eqref{exactexpo}, \eqref{sumsn}, \eqref{sumr}, and H\"older's inequality.

Let $\alpha \geq 2$ and assume that \eqref{Dconver} holds for all integers $\alpha^* \in[1, \alpha - 1]$, that is, 
\begin{align} \label{assump}
\sup_{n \leq T/h}\|\mathcal{D}^{\alpha^*} Y_n - \mathcal{D}^{\alpha^*} Y(t_n)\|_{L^p(\Omega, \, \mathbb{H}^{\otimes \alpha^*} \otimes \hR^{k+2})} \leq C (p, T, \alpha^*) h, \qquad \forall\, p \geq 1. 
\end{align}
It remains to show that \eqref{Dconver} holds for all $\alpha \geq 2$ and all $p \geq 1$. For $\alpha \geq 2$, by computing the $\alpha$-th $(\alpha \geq 2)$ Malliavin derivative on both sides of \eqref{vvgle} and using the multivariate chain rule for Malliavin calculus, one gets 
\begin{align*}
\mathcal{D}^{\alpha} v_{n + 1} - \mathcal{D}^{\alpha} v(t_{n + 1}) 
&= e^{-\frac{\gamma}{2} h}(\mathcal{D}^{\alpha} v_{n} - \mathcal{D}^{\alpha} v(t_{n})) + e^{-\frac{\gamma}{2} h} \int_{t_{n}}^{t_{n + 1}} \frac{\gamma}{2}(\mathcal{D}^{\alpha} v(t_{n}) - \mathcal{D}^{\alpha} v_{n}) \rd t \\
&\quad + e^{-\frac{\gamma}{2} h} \int_{t_{n}}^{t_{n + 1}} \int_{0}^{1} \int_{0}^{1} \nabla^{2} U(\hat{x}_{n}^{t})(\mathcal{D}^{\alpha} x(t_{n}) - \mathcal{D}^{\alpha} x_{n}) \rd \theta \rd \xi \rd t \\
&\quad - e^{-\frac{\gamma}{2} h} \sum_{\ell = 1}^{k} \lambda_{\ell} \int_{t_{n}}^{t_{n + 1}}(\mathcal{D}^{\alpha} z_{\ell}(t_{n}) - \mathcal{D}^{\alpha} z_{n}^{\ell}) \rd t + \mathcal{S}_{n, \alpha}^1, 
\end{align*}
where
\begin{align*}
\mathcal{S}_{n, \alpha}^1
&= \mathcal{S}_{n, \alpha}^{1, 1} + \mathcal{S}_{n, \alpha}^{1, 2} + \mathcal{S}_{n, \alpha}^{1, 3} + \mathcal{S}_{n, \alpha}^{1, 4} + \mathcal{S}_{n, \alpha}^{1, 5}, \\
\mathcal{S}_{n, \alpha}^{1, 1}
&= \int_{t_n}^{t_{n + 1}} \big( e^{-\frac{\gamma}{2} h} + e^{-\frac{\gamma}{2}(t_{n + 1} - t)} \big) \big(\frac{\gamma}{2} \mathcal{D}^{\alpha} v(t) + \mathcal{D}^{\alpha}[\nabla U(x(t))] - \sum_{\ell = 1}^{k} \lambda_{\ell} \mathcal{D}^{\alpha} z_{\ell}(t) \big) \rd t, \\
\mathcal{S}_{n, \alpha}^{1, 2} 
&= e^{-\frac{\gamma}{2} h} \int_{t_{n}}^{t_{n + 1}} \int_{0}^{1} \int_{0}^{1} \mathcal{D}^{\alpha}[\nabla^{2} U(\hat{x}_{n}^{t})(x(t_{n}) - x_{n}) ] \rd \theta \rd \xi \rd t \\
&\quad - e^{-\frac{\gamma}{2} h} \int_{t_{n}}^{t_{n + 1}} \int_{0}^{1} \int_{0}^{1} \nabla^{2} U(\hat{x}_{n}^{t})(\mathcal{D}^{\alpha} x(t_{n}) - \mathcal{D}^{\alpha} x_{n}) \rd \theta \rd \xi \rd t, \\
\mathcal{S}_{n, \alpha}^{1, 3}
&= - e^{-\frac{\gamma}{2} h} \frac{\gamma}{2} \int_{t_{n}}^{t_{n + 1}} \int_{t_{n}}^{t} \big( \gamma \mathcal{D}^{\alpha} v(s) + \mathcal{D}^{\alpha}[\nabla U(x(s))] - \sum_{\ell = 1}^{k} \lambda_{\ell} \mathcal{D}^{\alpha} z_{\ell}(s) \big) \rd s \rd t - e^{-\frac{\gamma}{2} h} \frac{\gamma}{4} \int_{t_{n}}^{t_{n + 1}} \mathcal{D}^{\alpha} \mathcal{A}^{n} \rd t, \\
\mathcal{S}_{n, \alpha}^{1, 4}
&= e^{-\frac{\gamma}{2} h} \int_{t_{n}}^{t_{n + 1}} \int_{0}^{1} \int_{0}^{1} \mathcal{D}^{\alpha} \big[ \nabla^{2} U(\hat{x}_{n}^{t}) \big( \int_{t_{n}}^{t} v(s) \rd s - \xi \mathcal{C}^{n} \big) \big] \rd \theta \rd \xi \rd t, \\
\mathcal{S}_{n, \alpha}^{1, 5}
&= - e^{-\frac{\gamma}{2} h} \sum_{\ell = 1}^{k} \lambda_{\ell} \int_{t_{n}}^{t_{n + 1}} \int_{t_{n}}^{t} \big( \alpha_{\ell} \mathcal{D}^{\alpha} z_{\ell}(s) + \lambda_{\ell} \mathcal{D}^{\alpha} v(s) \big) \rd s \rd t - \frac{1}{2} e^{-\frac{\gamma h}{2}} \sum_{\ell = 1}^{k} \lambda_{\ell} \int_{t_n}^{t_{n + 1}} \mathcal{D}^{\alpha} \mathcal{B}_{\ell}^{n} \rd t.
\end{align*}
Taking the $\alpha$-th $(\alpha \geq 2)$ Malliavin derivative on both sides of \eqref{zzgle} and using the chain rule yield
\begin{align*}
\mathcal{D}^{\alpha} z_{n + 1}^{\ell} - \mathcal{D}^{\alpha} z_{\ell}(t_{n + 1}) 
&= e^{-\alpha_{\ell} h}(\mathcal{D}^{\alpha} z_{n}^{\ell} - \mathcal{D}^{\alpha} z_{\ell}(t_{n})) + e^{-\alpha_{\ell} h} \int_{t_{n}}^{t_{n + 1}} \lambda_{\ell}(\mathcal{D}^{\alpha} v(t_{n}) - \mathcal{D}^{\alpha} v_{n}) \rd t \\
&\quad + e^{-\alpha_{\ell} h} \int_{t_n}^{t_{n + 1}} \frac{\gamma}{2} \lambda_{\ell}(\mathcal{D}^{\alpha} x(t_{n}) - \mathcal{D}^{\alpha} x_{n}) \rd t + \mathcal{S}_{n, \ell, \alpha}^{2}, 
\end{align*}
where
\begin{align*}
\mathcal{S}_{n, \ell, \alpha}^{2} 
&= \mathcal{S}_{n, \ell, \alpha}^{2, 1} + \mathcal{S}_{n, \ell, \alpha}^{2, 2} + \mathcal{S}_{n, \ell, \alpha}^{2, 3} + \mathcal{S}_{n, \ell, \alpha}^{2, 4}, \\
\mathcal{S}_{n, \ell, \alpha}^{2, 1}
&= \int_{t_{n}}^{t_{n + 1}} \big( - e^{-\alpha_{\ell} h} + e^{-\alpha_{\ell}(t_{n + 1} - t)} \big) \big( \lambda_{\ell} \mathcal{D}^{\alpha} v(t) + \frac{\gamma}{2} \lambda_{\ell} \mathcal{D}^{\alpha} x(t) \big) \rd t, \\
\mathcal{S}_{n, \ell, \alpha}^{2, 2}
&= - e^{-\alpha_{\ell} h} \lambda_{\ell} \int_{t_{n}}^{t_{n + 1}} \int_{t_{n}}^{t} \big( \gamma \mathcal{D}^{\alpha} v(s) + \mathcal{D}^{\alpha}[\nabla U(x(s))] - \sum_{\ell = 1}^{k} \lambda_{\ell} \mathcal{D}^{\alpha} z_{\ell}(s) \big) \rd s \rd t - e^{-\alpha_{\ell} h} \frac{\lambda_{\ell}}{2} \int_{t_{n}}^{t_{n + 1}} \mathcal{D}^{\alpha} \mathcal{A}^{n} \rd t, \\
\mathcal{S}_{n, \ell, \alpha}^{2, 3}
&= - e^{-\alpha_\ell h} \frac{\gamma \lambda_{\ell}}{2} \int_{t_{n}}^{t_{n + 1}} \int_{t_{n}}^{t} \mathcal{D}^{\alpha} v(s) \rd s \rd t - e^{-\alpha_{\ell} h} \frac{\gamma \lambda_{\ell}}{4} \int_{t_{n}}^{t_{n + 1}} \mathcal{D}^{\alpha} \mathcal{C}^{n} \rd t, \\
\mathcal{S}_{n, \ell, \alpha}^{2, 4}
&= \int_{t_{n}}^{t_{n + 1}} e^{-\alpha_{\ell}(t_{n + 1} - t)} \frac{\gamma \lambda_{\ell}}{2} \mathcal{D}^{\alpha}(\tilde{x}_{T_{n}, t} - x(t)) \rd t.
\end{align*}
Taking the $\alpha$-th $(\alpha \geq 2)$ Malliavin derivative on both sides of \eqref{xxgle} and applying the chain rule specific to Malliavin calculus yield 
\begin{align*}
\mathcal{D}^{\alpha} x_{n + 1} - \mathcal{D}^{\alpha} x(t_{n + 1}) 
&= e^{-\frac{\gamma}{2} h}(\mathcal{D}^{\alpha} x_{n} - \mathcal{D}^{\alpha} x(t_{n})) - e^{-\frac{\alpha}{2} h} \int_{t_n}^{t_{n + 1}} \frac{\gamma}{2}(\mathcal{D}^{\alpha} x(t_{n}) - \mathcal{D}^{\alpha} x_{n}) \rd t \\
&\quad - e^{-\frac{\gamma}{2} h} \int_{t_n}^{t_{n + 1}}(\mathcal{D}^{\alpha} v(t_{n}) - \mathcal{D}^{\alpha} v_{n}) \rd t + \mathcal{S}_{n, \alpha}^3, 
\end{align*}
where
\begin{align*}
\mathcal{S}_{n, \alpha}^{3}
&= \mathcal{S}_{n, \alpha}^{3, 1} + \mathcal{S}_{n, \alpha}^{3, 2} + \mathcal{S}_{n, \alpha}^{3, 3}, \\
\mathcal{S}_{n, \alpha}^{3, 1}
&= - \int_{t_{n}}^{t_{n + 1}}( - e^{-\frac{\gamma}{2} h} + e^{-\frac{\gamma}{2}(t_{n + 1} - t)})(\frac{\gamma}{2} \mathcal{D}^{\alpha} x(t) + \mathcal{D}^{\alpha} v(t)) \rd t, \\
\mathcal{S}_{n, \alpha}^{3, 2}
&= e^{-\frac{\gamma}{2} h} \frac{\gamma}{2} \int_{t_{n}}^{t_{n + 1}} \int_{t_{n}}^{t} \mathcal{D}^{\alpha} v(s) \rd s \rd t + e^{-\frac{\gamma}{2} h} \frac{\gamma}{4} \int_{t_{n}}^{t_{n + 1}} \mathcal{D}^{\alpha} \mathcal{C}^{n} \rd t, \\
\mathcal{S}_{n, \alpha}^{3, 3}
&= e^{-\frac{\gamma}{2} h} \int_{t_{n}}^{t_{n + 1}} \int_{t_{n}}^{t}(\gamma \mathcal{D}^{\alpha} v(s) + \mathcal{D}^{\alpha}[\nabla U(x(s))] - \sum_{\ell = 1}^{k} \lambda_{\ell} \mathcal{D}^{\alpha} z_{\ell}(s)) \rd s \rd t + \frac{1}{2}e^{-\frac{\gamma}{2} h} \int_{t_{n}}^{t_{n + 1}} \mathcal{D}^{\alpha} \mathcal{A}^{n} \rd t. 
\end{align*}
Let
\begin{align*}
\mathcal{T}_{n, \alpha} := \|\mathcal{D}^{\alpha} v_{n} - \mathcal{D}^{\alpha} v(t_{n})\|_{\mathbb{H}^{\otimes \alpha}} + \sum_{\ell = 1}^{k}\|\mathcal{D}^{\alpha} z_{n}^{\ell} - \mathcal{D}^{\alpha} z_{\ell}(t_{n})\|_{\mathbb{H}^{\otimes \alpha}} + \|\mathcal{D}^{\alpha} x_{n} - \mathcal{D}^{\alpha} x(t_{n})\|_{\mathbb{H}^{\otimes \alpha}}. 
\end{align*}
By the initial condition $\mathcal{T}_{0, \alpha} = 0$ for $\alpha \geq 2$, one has 
\begin{align}
\mathcal{T}_{n + 1, \alpha} 
&\leq \mathcal{T}_{n, \alpha} + C \mathcal{T}_{n, \alpha} \int_{t_n}^{t_{n+1}} 1 + |x(t)|^{2 m-\epsilon} + |x_{n}|^{2 m - \epsilon} + |\bar{x}_{T_{n}, t_{n + 1}}|^{2 m - \epsilon} \rd t + \|\mathcal{S}_{n, \alpha}\|_{\mathbb{H}^{\otimes \alpha}} \notag \\
&\leq \exp \bigg( \sum_{j = 0}^{n} C \int_{t_j}^{t_{j+1}} 1 + |x(t)|^{2 m-\epsilon} + |x_{j}|^{2 m - \epsilon} + |\bar{x}_{T_{j}, t_{j + 1}}|^{2 m - \epsilon} \rd t \bigg) \bigg( \sum_{j = 0}^{n}\|\mathcal{S}_{j, \alpha}\|_{\mathbb{H}^{\otimes \alpha}} \bigg). \label{Tieq}
\end{align}
By H\"older's inequality, Proposition \ref{Malliavindifferentiability}, Lemma \ref{Numercal:Malliavinbound}, \cite[Lemma 6.1]{CuiHongSheng2022}, and the assumption $U \in C_{p}^{\infty}(\hR)$, it follows that for any $\eta \in \mathbb{N}_{+}$ and $q \geq 1$, there exists $C = C (\eta, q)$ such that for any $t \in[0, T]$ and $n \in\{0, 1, \cdots, T/h\}$, 
\begin{align*}
& \hE \Big[\|\mathcal{D}^{\eta}[\nabla U(x(t))]\|_{\mathbb{H}^{\otimes \eta}}^{q}\Big] + \hE \Big[\|\mathcal{D}^{\eta} v(t)\|_{\mathbb{H}^{\otimes \eta}}^{q}\Big] + \hE \Big[\|\mathcal{D}^{\eta} v_{n}\|_{\mathbb{H}^{\otimes \eta}}^{q}\Big] + \hE \Big[\|\mathcal{D}^{\eta} \bar{v}_{T_{n}, t_{n + 1}}\|_{\| \mathbb{H}^{\otimes \eta}}^{q}\Big] \\
&\qquad + \sum_{\ell = 1}^{k} \hE \Big[\|\mathcal{D}^{\eta} z_{\ell}(t)\|_{\mathbb{H}^{\otimes \eta}}^{q}\Big] + \sum_{\ell = 1}^{k} \hE \Big[\|\mathcal{D}^{\eta} z_{n}^{\ell}\|_{\mathbb{H}^{\otimes \eta}}^{q}\Big] + \sum_{\ell = 1}^{k} \hE \Big[\|\mathcal{D}^{\eta} \bar{z}_{T_{n}, t_{n + 1}}^{\ell}\|_{\mathbb{H}^{\otimes \eta}}^{q}\Big] \\
&\qquad + \hE \Big[\|\mathcal{D}^{\eta} x(t)\|_{\mathbb{H}^{\otimes \eta}}^{q}\Big] + \hE \Big[\|\mathcal{D}^{\eta} x_{n}\|_{\mathbb{H}^{\otimes \eta}}^{q}\Big] + \hE \Big[\|\mathcal{D}^{\eta} \bar{x}_{T_{n}, t_{n + 1}}\|_{\mathbb{H}^{\otimes \eta}}^{q}\Big] 
\leq C 
\end{align*}
and 
\begin{align} \label{boundU}
 \hE \Big[\|\mathcal{D}^{\eta}[\nabla U(x_{n} + \xi(\bar{x}_{T_{n}, t_{n} + 1} - x_{n}))]\|_{\mathbb{H}^{\otimes \eta}}^{q}\Big] + \hE \Big[\|\mathcal{D}^{\eta}[\nabla^{2} U(\hat{x}_{n}^{t})]\|_{\mathbb{H}^{\otimes \eta}}^{q}\Big] 
\leq C. 
\end{align}
Consequently, one obtains that for $q \geq 1$, 
\begin{align*}
&\ \sup_{n \leq T/h} \hE \Big[\|\mathcal{S}_{n, \alpha}^{1, 1}\|_{\mathbb{H}^{\otimes \alpha}} + \|\mathcal{S}_{n, \alpha}^{1, 3}\|_{\mathbb{H}^{\otimes \alpha}} + \|\mathcal{S}_{n, \alpha}^{1, 4}\|_{\mathbb{H}^{\otimes \alpha}} + \|\mathcal{S}_{n, \alpha}^{1, 5}\|_{\mathbb{H}^{\otimes \alpha}} + \sum_{\ell = 1}^{k}\| \mathcal{S}_{n, \ell, \alpha}^{2} \|_{\mathbb{H}^{\otimes \alpha}} + \|\mathcal{S}_{n, \alpha}^{3}\|_{\mathbb{H}^{\otimes \alpha}}\Big] \\
&\leq C (q, \alpha) h^{2 q}. 
\end{align*}
It remains to estimate $\mathcal{S}_{n, \alpha}^{1, 2}$. Using \cite[(6.3)]{CuiHongSheng2022}, one obtains
\begin{align}
&\ \mathcal{D}^{\alpha}[\nabla^{2} U(\hat{x}_{n}^{t})(x(t_{n}) - x_{n})] - \nabla^{2} U(\hat{x}_{n}^{t})(\mathcal{D}^{\alpha} x(t_{n}) - \mathcal{D}^{\alpha} x_{n})\notag \\
&= \sum_{\eta = 1}^{\alpha}\binom{\alpha}{\eta} \mathcal{D}^{\eta}[\nabla^{2} U(\hat{x}_{n}^{t})] \,\widetilde{\otimes}\, \mathcal{D}^{\alpha - \eta}[x(t_{n}) - x_{n}]. \label{tensor}
\end{align}
By the induction hypothesis \eqref{assump}, for any integer $1 \leq \eta \leq \alpha$, one has $0 \leq \alpha - \eta \leq \alpha - 1$ and
\begin{align*}
\sup_{n \leq T/h} \hE \big[\|\mathcal{D}^{\alpha - \eta}[x(t_{n}) - x_{n}]\|_{\mathbb{H}^{\otimes (\alpha - \eta)}}^{q} \big] \leq C(\alpha, q, T) h^{q}, 
\end{align*}
which combined with \cite[Lemma 6.1]{CuiHongSheng2022}, \eqref{boundU} and \eqref{tensor} implies 
\begin{align*}
\sup_{n \leq T/h} \hE [\|\mathcal{S}_{n, \alpha}^{1, 2}\|_{\mathbb{H}^{\otimes \alpha}}^{q} ] \leq C(q, \alpha, T) h^{2 q}. 
\end{align*}
Substituting the preceding estimates for $\{\mathcal{S}_{n, \alpha}^{1, i}\}_{i = 1, 2, 3, 4, 5}$, $\{\mathcal{S}_{n, \ell, \alpha}^{2, i}\}_{\ell = 1, \cdots, k, \, i = 1, 2, 3, 4}$ and $\{\mathcal{S}_{n, \alpha}^{3, i}\}_{i = 1, 2, 3}$ into \eqref{Tieq} yields that for any $p \geq 1$, 
\begin{align*}
&\|\mathcal{T}_{n + 1, \alpha}\|_{L^{p}(\Omega)} \\
&\leq \left\|\exp \bigg( \sum_{j = 1}^{n} C \int_{t_j}^{t_{j+1}} 1 + |x(t)|^{2 m-\epsilon} + |x_{j}|^{2 m - \epsilon} + |\bar{x}_{T_{j}, t_{j + 1}}|^{2 m - \epsilon} \rd t \bigg) \right\|_{L^{2 p}(\Omega)} \bigg( \sum_{j = 0}^{n}\|\mathcal{S}_{j, \alpha}\|_{\mathbb{H}^{\otimes \alpha}} \bigg) \\
&\leq C(p) h. 
\end{align*}
Finally, in light of the inequality
\begin{align*}
\|\mathcal{D}^{\alpha} Y_{n} - \mathcal{D}^{\alpha} Y(t_{n})\|_{\mathbb{H}^{\otimes \alpha} \otimes \hR^{k+2}} \leq C \mathcal{T}_{n, \alpha}, 
\end{align*}
the proof is completed.
\end{proof}

The convergence of the Malliavin derivatives of the numerical solutions, as established in Theorem \ref{pdfconver}, along with the uniform non-degeneracy result from Proposition \ref{covarianmatrix}, jointly implies the convergence of the numerical probability density functions, as demonstrated by \cite[Proposition 2.3]{CuiHongSheng2022}, leading to the following corollary.

\begin{corollary} [Convergence rate of density]
Let $\eta\ge0$, $1<q<\infty$, $\alpha>\eta + (k + 2)/q + 1$, $G\in \mathbb{D}^{\alpha, q}$, and $1/p + 1/q = 1$. If Assumption \ref{ass.1} holds, then $(1-\Delta)^{\eta / 2} \delta_y \circ Y_{T/h} \in \mathbb{D}^{-\alpha, p}$, $(1-\Delta)^{\eta / 2} \delta_y \circ Y(T) \in \mathbb{D}^{-\alpha, p}$, and 
\begin{align*}
\sup_{y\in\hR^{k + 2}} \left| (1 - \Delta)^{\eta/2} \hE \big[ G \cdot \delta_y \circ Y_{T/h} \big] - (1 - \Delta)^{\eta/2} \hE \big[ G \cdot \delta_y\circ Y(T) \big] \right| 
\leq Ch. 
\end{align*}
In particular, taking $G = 1$ yields 
\begin{align*}
\sup_{y\in\hR^{k + 2}}\left|(1 - \Delta)^{\eta/2}p^{T/h}_T(Y_0, y) - (1 - \Delta)^{\eta/2}p_T(Y(0), y)\right| \leq Ch, 
\end{align*}
where $p^{T/h}_T(Y_0, y) = \hE \left[\delta_y\circ Y_{T/h}\right]$ and $p_T(Y(0), y) = \hE \left[\delta_y\circ Y(T)\right]$ denote the probability density functions of $Y_{T/h}$ and $Y(T)$ with respect to $y$, respectively. 
\end{corollary}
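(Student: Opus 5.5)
The plan is to obtain the corollary from the abstract density-approximation result \cite[Proposition 2.3]{CuiHongSheng2022}. That result compares $(1-\Delta)^{\eta/2}\hE[G\cdot\delta_y\circ F]$ for two nondegenerate Wiener functionals $F$ and $F_h$. It requires four ingredients:
\begin{itemize}
\item[(a)] $F,F_h\in\mathbb{D}^{\infty}(\hR^{k+2})$, with $\mathbb{D}^{\alpha+1,r}$-norms bounded uniformly in $h$;
\item[(b)] $\|\det(\Gamma_F)^{-1}\|_{L^r}<\infty$;
\item[(c)] $\|\det(\Gamma_{F_h})^{-1}\|_{L^r}$ bounded uniformly in $h$, or at worst growing at a controlled rate;
\item[(d)] $\|F_h-F\|_{\alpha+1,r}\le Ch$.
\end{itemize}
Its conclusion is that the difference is bounded by $C\|F_h-F\|_{\alpha+1,r}$, uniformly in $y$. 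So I would take $F=Y(T)$ and $F_h=Y_{T/h}$ and verify each ingredient from earlier results.

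First, membership in $\mathbb{D}^{-\alpha,p}$. By Propositions \ref{Malliavindifferentiability} and \ref{exactcovarianmatrix}, $Y(T)$ is nondegenerate. By Proposition \ref{Numerical:Malliavindifferentiability}, Lemma \ref{Numercal:Malliavinbound} and Proposition \ref{covarianmatrix}, $Y_{T/h}$ is nondegenerate for each fixed $h$. Watanabe's pullback theory then places $(1-\Delta)^{\eta/2}\delta_y$, a tempered distribution in the Sobolev space of order $-\eta-(k+2)/q-\varepsilon$, composed with these functionals in $\mathbb{D}^{-\alpha,p}$. This is where the condition $\alpha>\eta+(k+2)/q+1$ is used. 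The pairing with $G\in\mathbb{D}^{\alpha,q}$ is then well-defined.

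For the rate, I would write the difference as $\hE[G\,((1-\Delta)^{\eta/2}\delta_y)\circ Y_{T/h}]-\hE[G\,((1-\Delta)^{\eta/2}\delta_y)\circ Y(T)]$. Approximating $\delta_y$ by mollifiers $\phi_\epsilon$, the mean value theorem gives an interpolation $Y^\theta=\theta Y_{T/h}+(1-\theta)Y(T)$, and the difference becomes $\int_0^1\hE[G\,\nabla\phi_\epsilon(Y^\theta)(Y_{T/h}-Y(T))]\,\rd\theta$. Iterated Malliavin integration by parts, applied $\alpha$ times to remove derivatives from $\phi_\epsilon$, moves all regularity onto the weights. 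These weights are polynomial in $\mathcal{D}^jY^\theta$, $G$, $\mathcal{D}^j(Y_{T/h}-Y(T))$ and $\det(\Gamma_{Y^\theta})^{-1}$. The factor $\|\mathcal{D}^j(Y_{T/h}-Y(T))\|_{L^r}\le Ch$ for $j\le\alpha+1$ comes from Theorem \ref{1strongconver} and Theorem \ref{pdfconver}. The derivative bounds come from Proposition \ref{Malliavindifferentiability} and Lemma \ref{Numercal:Malliavinbound}. Letting $\epsilon\to0$ and taking the supremum over $y$ gives the bound $Ch$. The case $G=1$ is then immediate, since $\hE[\delta_y\circ F]$ is the density.

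The main obstacle is the non-degeneracy of the interpolated functional $Y^\theta$: I need $\|\det(\Gamma_{Y^\theta})^{-1}\|_{L^r}$ bounded independently of $h$. Proposition \ref{covarianmatrix} only gives $\cO(h^{-\nu(p)})$ for $Y_{T/h}$, not a uniform bound. I would avoid needing uniformity by working through the exact solution, as in \cite{CuiHongSheng2022}. One has $\Gamma_{Y^\theta}-\Gamma_{Y(T)}=\cO(h)$ in every $L^r$, by Theorem \ref{pdfconver} with $\alpha=1$. So I would split $\Omega$ into two events. On the event $\{\lambda_{\min}(\Gamma_{Y(T)})>2\|\Gamma_{Y^\theta}-\Gamma_{Y(T)}\|\}$, the inverse determinant of $\Gamma_{Y^\theta}$ is controlled by that of $\Gamma_{Y(T)}$, which is bounded by Proposition \ref{exactcovarianmatrix}. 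The complementary event has probability $\cO(h^N)$ for every $N$, by Chebyshev's inequality and the negative moments of $\lambda_{\min}(\Gamma_{Y(T)})$. I would implement this with a smooth cutoff $\psi(\|\Gamma_{Y^\theta}-\Gamma_{Y(T)}\|/\lambda_{\min}(\Gamma_{Y(T)}))$ inserted into the integration by parts. On the bad event I would bound the contribution crudely, using the polynomial rate $h^{-\nu(p)}$ from Proposition \ref{covarianmatrix} together with a localized argument. Since the bad event has probability $\cO(h^N)$, that contribution is still $\cO(h)$. This yields the claimed first-order estimate.
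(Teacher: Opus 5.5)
Your proposal is correct and follows the paper's route. The paper simply applies \cite[Proposition 2.3]{CuiHongSheng2022} with $F=Y(T)$ and $F_h=Y_{T/h}$, feeding in Theorem \ref{pdfconver} (together with the strong rate) and the non-degeneracy bounds of Propositions \ref{exactcovarianmatrix} and \ref{covarianmatrix}; your localization unpacks how that abstract result copes with the merely polynomial bound $\cO(h^{-\nu(p)})$. If you write the localization out, make the cutoff independent of $\theta$ (built from $\mathcal{D}Y_{T/h}-\mathcal{D}Y(T)$ and $\Gamma_{Y(T)}$) and split off the bad event before interpolating, because on that event only $Y_{T/h}$ and $Y(T)$ individually, not $Y^\theta$, are known to be non-degenerate.
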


\section{Ergodicity of the numerical approximation}
\label{sec.Ergodicity}
In this section, we prove the geometric ergodicity of the splitting AVF method \eqref{numsolu}--\eqref{sdesolu}. As $T \to \infty$, we aim to demonstrate that the stationary distribution of the numerical approximation converges exponentially fast to the unique invariant measure. According to \cite[Theorem 2.5]{MattinglyStuartHigham2002}, it suffices to verify the Lyapunov and minorization conditions to ensure the existence and uniqueness of an invariant measure. In the following lemma, we establish the existence of the invariant measure by verifying the Lyapunov condition.

\begin{lemma}[Lyapunov condition]
\label{Numerical:Lyapunov}
Let $\{Y_{n}\}_{n\in \mathbb{N}}$ be the numerical solution defined by \eqref{numsolu}--\eqref{sdesolu} with $h \in(0, h^{*})$. Under Assumption \ref{ass.1}, there exist constants $\alpha \in(0, 1)$ and $\beta \in(0, \infty)$ such that the Lyapunov function $H$ defined in \eqref{newHamil} satisfies
\begin{align*}
 \hE \big[ H(Y_{n + 1}) + C_{H} \mid \mathcal{F}_{t_n} \big] \leq \alpha \big( H(Y_{n}) + C_{H} \big) + \beta, \qquad \forall\, n \geq 1, 
\end{align*}
and hence $\{Y_{n}\}_{n\in \mathbb{N}}$ admits an invariant measure.
\end{lemma}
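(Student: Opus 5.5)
Our plan is to split one step of the scheme into its two stages and treat them separately: the deterministic AVF stage leaves $H$ unchanged, and all the contraction comes from the stochastic Ornstein--Uhlenbeck-type stage $\Phi^S$. Step one is exact conservation. Since $\Psi^D$ is the Hamiltonian vector field of $H$ in \eqref{newHamil} with a constant skew-symmetric structure matrix, the AVF map \eqref{avfmethod} satisfies $H(\bar{Y}_{T_n,t_{n+1}})=H(Y_n)$. We will check this directly: take the inner product of the increments $(\mathcal{A}^n,\mathcal{B}^n_\ell,\mathcal{C}^n)$ with the averaged gradient $\int_0^1\nabla H(Y_n+\theta(\bar Y-Y_n))\rd\theta$. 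Because $H$ is quadratic apart from $U$, and the $\nabla U$ term is averaged exactly, the fundamental theorem of calculus gives $H(\bar Y)-H(Y_n)=0$. Hence it suffices to show $\hE[H(Y_{n+1})+C_H\mid\mathcal{F}_{t_n}]\le \alpha(H(\bar Y_{T_n,t_{n+1}})+C_H)+\beta$, where $\bar Y_{T_n,t_{n+1}}$ is $\mathcal{F}_{t_n}$-measurable by Proposition \ref{solvability}.

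Step two is a drift estimate for the stochastic subflow. We apply Itô's formula to $e^{\kappa(t-t_n)}(H(\tilde Y_{T_n,t})+C_H)$ on $[t_n,t_{n+1}]$, with $\tilde Y_{T_n,\cdot}$ solving $\rd\tilde Y=\tilde\mu(\tilde Y)\rd t+\tilde\sigma\rd W$ started from $\bar Y_{T_n,t_{n+1}}$. The computation in the proof of Proposition \ref{Numerical:expo}, applied with $K_0=1$ and using \eqref{superu}, \eqref{fisrtu} and Young's inequality, gives
\[
DH(Y)\tilde\mu(Y)+\tfrac12\operatorname{tr}(D^2H\,\tilde\sigma\tilde\sigma^\top)\le -c\,(H(Y)+C_H)+C.
\]
Here $c=\underline{\alpha}_\gamma/2>0$, and $C_H$ is chosen so that $H+C_H\ge 1$; this is possible because $\frac{\gamma}{2}vx$ is absorbed by $\frac14v^2+C x^2\le \frac14 v^2+\tfrac12U+C$. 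We take $\kappa=c$. Taking conditional expectations removes the martingale term, which is a true martingale by the moment bounds of Corollary \ref{numerical:momentbound}, or after a standard localization. This yields
\[
\hE[H(Y_{n+1})+C_H\mid\mathcal{F}_{t_n}]\le e^{-ch}(H(\bar Y_{T_n,t_{n+1}})+C_H)+\tfrac{C}{c}(1-e^{-ch}),
\]
that is, $\alpha=e^{-ch}\in(0,1)$ and $\beta=C(1-e^{-ch})/c$. Alternatively, since $\Phi^S$ is linear with Gaussian noise, the same bound could be obtained by computing the conditional mean of the quadratic part of $H$ explicitly. However, the $U(\tilde x)$ term then requires $U(e^{-\gamma h/2}\bar x)\le U(\bar x)+C$-type control, so the Itô route is cleaner.

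Step three draws the conclusion. Combining the two steps gives the claimed inequality. Taking expectations and iterating yields $\sup_n\hE[H(Y_n)+C_H]<\infty$. Since $H+C_H$ has compact sublevel sets by \eqref{superu}, the Krylov--Bogoliubov argument for the Feller Markov chain $\{Y_n\}$ then gives tightness of the Cesàro averages of the laws, and hence an invariant measure. The chain is Feller because $\phi_h$ is continuous (Remark \ref{phi:bijection_smooth}) and $\Phi^S$ is an affine map plus independent Gaussian noise.

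The main obstacle is step one. We must verify carefully that the coefficients in $\mathcal{A}^n,\mathcal{B}^n_\ell,\mathcal{C}^n$ combine into a skew form against the averaged gradient $(\bar v_{\rm avg}+\frac{\gamma}{2}x_{\rm avg},\,z_{\rm avg},\,\overline{\nabla U}+\frac{\gamma}{2}v_{\rm avg})$, where the subscript avg denotes the midpoint average. In particular, the $\frac{\gamma}{4}$ terms in $\mathcal{A}^n$ and $\mathcal{C}^n$ and the $\frac{\gamma\lambda_\ell}{4}$ terms in $\mathcal{B}^n_\ell$ must cancel exactly. We will do this bookkeeping explicitly. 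If it fails, we will instead bound $H(\bar Y)-H(Y_n)$ by $Ch(H(Y_n)+C_H)$ and absorb it for small $h$.
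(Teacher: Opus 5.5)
Your proposal is correct and follows essentially the same route as the paper. The AVF stage conserves $H$ exactly: $\Psi^D = S\nabla H$ with a constant skew-symmetric $S$, and \eqref{avfmethod} is precisely $\bar Y - Y_n = hS\int_0^1\nabla H(Y_n+\theta(\bar Y-Y_n))\,\rd\theta$, so your bookkeeping concern resolves and the fallback is not needed. The contraction then comes from the drift bound of Proposition \ref{Numerical:expo} combined with It\^o's formula and Gr\"onwall on the stochastic subflow, giving $\alpha=e^{-ch}$ and $\beta=C(1-e^{-ch})/c$. Your version is slightly more careful than the paper's: it keeps the It\^o trace term, justifies that the stochastic integral is a true martingale, and spells out the Feller/Krylov--Bogoliubov argument for existence of the invariant measure.
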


\begin{proof}
From the proof of Proposition \ref{Numerical:expo}, it follows that
\begin{align*}
D H(Y) \tilde{\mu}(Y) 
\leq -\frac{\underline{\alpha}_\gamma}{2}H(Y) + C, 
\end{align*}
where $\underline{\alpha}_{\gamma} := \gamma \wedge \frac{\gamma C_4}{4 C_2} \wedge \alpha_{1} \wedge \cdots \wedge \alpha_{k}$. Then, it follows from It\^o's formula that 
\begin{align*}
\rd H(\tilde{Y}_{T_n , t}) = D H(\tilde{Y}_{T_n , t}) \tilde{\mu}(\tilde{Y}_{T_n , t}) \rd t + D H(\tilde{Y}_{T_n , t})^{\top} \tilde{\sigma} \rd \tilde{W}(t), 
\end{align*}
where $\tilde{W} = (\, W_0,\, W_1,\, \cdots,\, W_k, \, 0 \,)^{\top}$. Note that $\bar{Y}_{T_{n}, t_{n + 1}}$ is $\mathcal{F}_{t_{n}}$-measurable and independent of $ \int_{t_n}^{t_{n + 1}} D H(\tilde{Y}_{T_n , t})^{\top} \tilde{\sigma} \rd \tilde{W}(t)$. Thus, applying Gr\"onwall's inequality shows 
\begin{align*}
 \hE [H(Y_{n + 1}) \mid \mathcal{F}_{t_{n}}] \leq e^{-\frac{\underline{\alpha}_\gamma}{2} h} H(\bar{Y}_{T_{n}, t_{n + 1}}) + \frac{2C}{\underline{\alpha}_\gamma} (1-e^{-\frac{\underline{\alpha}_\gamma}{2} h}) = \alpha H(Y_{n}) + \beta, 
\end{align*}
where $\alpha = e^{-\frac{\underline{\alpha}_\gamma}{2} h} \in (0, 1)$ and $\beta = \frac{2C}{\underline{\alpha}_\gamma} (1-e^{-\frac{\underline{\alpha}_\gamma}{2} h}) \in(0, \infty)$. In the last step, we also use the fact that the subsystem \eqref{avfmethod} preserves the Hamiltonian $H$. The proof is complete. 
\end{proof}

\begin{lemma}[Minorization condition]
\label{Numerical:minorization}
Fix any $y_{\max} >0$. Under Assumption \ref{ass.1}, there exists $\bar{h} \in(0, h^{*})$ such that for any $h \in (0, \bar{h})$, there is $\alpha>0$ such that for all $\varphi \in C_{0}(\hR^{k+2}, \hR)$
\begin{align*}
\inf _{\|y\| \leqslant y_{\max }}(\mathcal{P}_{h}^{2} \varphi)(y) \geqslant \alpha \int_{\hR^{k+2}} \varphi(y) \nu(d x), \qquad y \in \hR^{k+2},
\end{align*}
where $\mathcal{P}_{h}^{2}$ is the transition kernel of the two-step iteration and $\nu$ denotes the Lebesgue measure on $\mathscr{B}(\hR^{k+2})$.
\end{lemma}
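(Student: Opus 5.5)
The plan is to write the two-step map explicitly in terms of the Gaussian innovations and show that, for initial data in the compact ball $\{\|y\|\le y_{\max}\}$, the law of $Y_2$ has a density that is bounded below on some fixed open set, uniformly in $y$. By \eqref{sdesolu}, one step reads $Y_{n+1}=M\phi_h(Y_n)+\xi_n$. Here $M$ is given in \eqref{Mmatrix}, $\phi_h$ is the smooth bijection of Remark \ref{phi:bijection_smooth}, and $\xi_n=(\xi_n^v,\xi_n^{z_1},\dots,\xi_n^{z_k},0)^\top$ is a centered Gaussian vector with nondegenerate covariance $\gamma_1$ on its first $k+1$ components, independent of $\mathcal F_{t_n}$. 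The $x$-component receives no noise in a single step, so we pass to two steps. Writing $\Psi:=M\phi_h$, which is smooth with inverse $\phi_h^{-1}M^{-1}$, we get
\[
Y_2=\Psi\big(\Psi(y)+\xi_0\big)+\xi_1=:F_y(\xi_0,\xi_1),\qquad (\xi_0,\xi_1)\in\hR^{k+1}\times\hR^{k+1}.
\]
This is a map from $2k+2$ noise variables to $k+2$ state variables, so it is not bijective. This is exactly the difficulty mentioned in the introduction.

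\textbf{Localization.} We restrict to a $(k+2)$-dimensional slice of the noise. Freeze all components of $\xi_0$ except $\xi_0^v$, call the frozen part $\zeta\in\hR^{k}$, and take $(\xi_0^v,\xi_1)\in\hR\times\hR^{k+1}$ as the active variables. Then $(\xi_0^v,\xi_1)\mapsto F_y(\xi_0^v,\zeta,\xi_1)$ is a map $\hR^{k+2}\to\hR^{k+2}$. Its $x$-component equals $e^{-\gamma h/2}\phi_x(\Psi(y)+\xi_0)$ and does not depend on $\xi_1$, while the remaining components are $[\Psi(\cdot)]_{1:k+1}+\xi_1$. Hence the Jacobian is block triangular, and its determinant equals, up to sign, $e^{-\gamma h/2}\,\partial_{v}\phi_x(\Psi(y)+\xi_0)$. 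From \eqref{nablaphih} and the explicit adjugate, $\partial_v\phi_x=\frac{h}{2}\cdot\frac{1-\gamma h/4}{\det(\partial G/\partial\bar Y)}+\cO(h^2)$, multiplied by factors bounded on compacts. So for $h<\bar h$ small this determinant is bounded below by $ch>0$ on any compact set. It is also invertible in the following explicit sense. Given a target $w=(w_{1:k+1},w_x)$, we can solve $\phi_x(\Psi(y)+\xi_0)=e^{\gamma h/2}w_x$ for $\xi_0^v$ by a monotone one-dimensional equation; monotonicity in $v$ follows from the sign of $\partial_v\phi_x$ together with the monotonicity estimate in Proposition \ref{solvability}. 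Then $\xi_1=w_{1:k+1}-[\Psi(\cdot)]_{1:k+1}$.

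\textbf{Change of variables and the lower bound.} For fixed $\zeta$, a change of variables gives a density of $Y_2$ conditional on $\zeta$:
\[
q_y(w\mid\zeta)=p_{\xi_0^v}\big(\xi_0^v(w)\big)\,p_{\xi_1}\big(\xi_1(w)\big)\,\big|\det\partial F_y\big|^{-1}.
\]
This is jointly continuous in $(y,\zeta,w)$ by smoothness of $\phi_h$ and the implicit function theorem. We then pick $\zeta\in[-1,1]^k$ and a fixed nonempty open ball $B$ of targets $w$ (for example, centered at $0$ with small radius). The preimages $(\xi_0^v(w),\xi_1(w))$ stay in a compact set uniformly over $\|y\|\le y_{\max}$, $\zeta\in[-1,1]^k$ and $w\in\bar B$, by continuity on a compact domain. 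Consequently the Gaussian densities are bounded below and $|\det|^{-1}\ge (C_{\rm cpt})^{-1}$ there. Integrating over $\zeta$ yields $q_y(w)\ge \int_{[-1,1]^k}q_y(w\mid\zeta)p_\zeta(\zeta)\,\rd\zeta\ge \alpha_0>0$ for $w\in B$. Hence $(\mathcal P_h^2\varphi)(y)\ge\alpha_0\int_B\varphi\,\rd w$ for $\varphi\ge0$, which is the claimed minorization with $\nu$ the Lebesgue measure restricted to $B$ (normalized if desired).

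\textbf{Main obstacle.} The hard step is establishing global solvability and uniform compactness of the preimage map $w\mapsto\xi_0^v(w)$. This requires $v\mapsto\phi_x(\Psi(y)+(v,\zeta,0))$ to be a strictly monotone bijection of $\hR$ onto a set containing $e^{\gamma h/2}\bar B_x$. I would first try to get this from the coercivity and monotonicity of $G$ in Proposition \ref{solvability}. Since $\bar x=x+\frac h2(\bar v+v)+\frac{\gamma h}{4}(\bar x+x)$ and $\bar v$ grows with $v$, $\phi_x\to\pm\infty$ as $v\to\pm\infty$. If that fails, I would fall back on a purely local argument: apply the inverse function theorem at a single reference noise value, obtain a small ball of targets, and use compactness of $\{\|y\|\le y_{\max}\}$ and continuity in $y$ to choose a common ball $B$.
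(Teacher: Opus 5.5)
Your argument is correct, but it reaches the minorization by a different route than the paper. The paper keeps the full noise map $\Psi_{h,y}:\hR^{2(k+1)}\to\hR^{k+2}$. It argues that this map is onto and that $D_g\Psi_{h,y}$ has rank $k+2$, and then treats minorization as a black box. The submersion lemma of Bena\"im et al.\ (their Lemma 6.3) gives, for each pair $(y,y^*)$, neighbourhoods $J_y,J_{y^*}$ and $c_{y,y^*}>0$ with $\mathcal P_h^2(\mathbf 1_{J_{y^*}}\varphi)(y')\ge c_{y,y^*}\int_{J_{y^*}}\varphi\,\rd\nu$ for $y'\in J_y$. The compactness argument of Leimkuhler--Sachs then covers the ball $\{\|y\|\le y_{\max}\}$.

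You instead select the $(k+2)$-dimensional slice $(\xi_0^v,\xi_1)$, use its block-triangular Jacobian, write the conditional density explicitly by a change of variables, and integrate out the frozen coordinates $\zeta$. Uniformity over the ball then follows in one stroke from continuity on a compact parameter set, with no covering step. In effect you prove the submersion lemma by hand in this setting. You also pin down which entry makes the Jacobian nondegenerate, whereas the paper only asserts that the last row of $M\nabla\phi_h(Y_1)\binom{I}{0}$ has nonzero entries. The price is that you need global invertibility of the slice map (or your local fallback, which is the paper's route). The paper's argument needs only a rank condition plus reachability, so it would transfer verbatim to other splittings.

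Your reading of the conclusion ($\varphi\ge 0$, $\nu$ Lebesgue measure restricted to a fixed ball) is the right one, and it is also what the paper's proof actually yields. With $\nu$ the full Lebesgue measure the inequality cannot hold: the left side never exceeds $\sup\varphi$, while $\int\varphi\,\rd\nu$ is unbounded over bounded $\varphi\ge0$.

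Two small repairs are needed.

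\emph{The expansion of $\partial_v\phi_x$.} Your expansion misses the contribution routed through the $(x,x)$ cofactor. Differentiating \eqref{avfmethod} directly gives exactly $\partial_v\phi_x=h/\det(\partial G/\partial\bar Y)$, so the correction to your leading term is $\cO(h)$, not $\cO(h^2)$. This only helps. Since $F_1\ge -K/2$, the determinant is positive for small $h$ (cf.\ \eqref{redetG}). Hence $\partial_v\phi_x>0$ on all of $\hR^{k+2}$, and $v\mapsto\phi_x$ is globally strictly increasing.

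\emph{Your ``main obstacle''.} It then closes quickly. Suppose $\phi_x$ stayed bounded as $v_1\to+\infty$ with $(z_1,x_1)$ fixed. The $x$-equation in \eqref{avfmethod} would keep $\bar v+v_1$ bounded. Hence $\bar z^\ell$ and $\int_0^1\nabla U(x_1+\theta(\bar x-x_1))\rd\theta$ would be bounded, and the $v$-equation would keep $\bar v-v_1$ bounded. That forces $v_1$ to be bounded, a contradiction; the case $v_1\to-\infty$ is symmetric. So the slice map is a global diffeomorphism of $\hR^{k+2}$, and its inverse is jointly continuous in $(y,\zeta,w)$ by the implicit function theorem. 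Your compactness bounds on the preimages and the resulting lower bound $\alpha_0$ then go through as written.
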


\begin{proof}
In view of \eqref{demapphi}, let
\begin{align*}
\Psi_{h}(y, G_{1}, G_{2}) = Y_2 = M \phi_{h}\left(M \phi_{h}(y)+\binom{G_{1}}{0}\right)+\binom{G_{2}}{0},
\end{align*}
denote the stochastic flow map of the twice iterated splitting AVF method \eqref{numsolu}--\eqref{sdesolu}. Here, $M$ is defined in \eqref{Mmatrix}, $\phi_h$ is given by \eqref{demapphi}, with
\begin{align*}
G_1 := \Big(\int_{t_{0}}^{t_{1}} e^{-\frac{\gamma}{2}(t_{1} - t)} \sqrt{2 \gamma} \rd W_{0}(t), \int_{t_{0}}^{t_{1}} e^{-\alpha_{1}(t_{1} - t)} \sqrt{2 \alpha_{1}} \rd W_{1}(t),\cdots,\int_{t_{0}}^{t_{1}} e^{-\alpha_{k}(t_{1} - t)} \sqrt{2 \alpha_{k}} \rd W_{k}(t)\Big)^{\top} 
\end{align*}
and 
\begin{align*}
G_2 := \Big(\int_{t_{1}}^{t_{2}} e^{-\frac{\gamma}{2}(t_{2} - t)} \sqrt{2 \gamma} \rd W_{0}(t), \int_{t_{1}}^{t_{2}} e^{-\alpha_{1}(t_{2} - t)} \sqrt{2 \alpha_{1}} \rd W_{1}(t),\cdots,\int_{t_{1}}^{t_{2}} e^{-\alpha_{k}(t_{2} - t)} \sqrt{2 \alpha_{k}} \rd W_{k}(t)\Big)^{\top},
\end{align*} 
are Gaussian random variables. In particular, $(\mathcal{P}_{n}^{2} \varphi)(y)=\mathbb{E}[\varphi(\Psi_{h}(y, G_{1}, G_{2}))]$. Define
\begin{align*}
\Psi_{h, y}: \hR^{2(k+1)} \to \hR^{k+2}, \qquad (g_{1}, g_{2}) \mapsto \Psi_{h}(y, g_{1}, g_{2}).
\end{align*} 
To establish the validity of a localized minorization condition using \cite[Lemma 6.3]{Benaime2015}, we need to verify that, for any initial point $Y_0=y \in \mathbb{R}^{k+2}$ and target point $y^* \in \mathbb{R}^{k+2}$, the state $\Psi_{h,y}\left(G_1, G_2\right)=y^*= \left(y_0^*, \ldots, y_{k+1}^*\right)^{\top}$ can be achieved by choosing $G_1$ and $G_2$, and that the Jacobian $D_g \Psi_{h, y}$ with $g=(g_{1}, g_{2})$ is required to be of full rank $k+2$.

\textbf{Step 1.} From \eqref{sdesolu}, it follows that 
\begin{align*}
&y_0^* = v_2 \, = \, e^{-\frac{\gamma h}{2}}\bar{v}_{T_{1}, t_{2}} + \int_{t_{1}}^{t_{2}} e^{-\frac{\gamma}{2}(t_{2} - t)} \sqrt{2 \gamma} \rd W_{0}(t), \\
&y_{\ell}^* = z_2^\ell \, = \, e^{-\alpha_\ell h}\bar{z}^{\ell}_{T_{1}, t_{2}} + \frac{\gamma \lambda_{\ell}}{2 \alpha_{\ell} - \gamma}(e^{-\frac{\gamma}{2} h} - e^{-\alpha_{\ell} h})\bar x_{T_1, t_2} + \int_{t_{1}}^{t_{2}} e^{-\alpha_{\ell}(t_{2} - t)} \sqrt{2 \alpha_{\ell}} \rd W_{\ell}(t), \quad \ell = 1, 2, \ldots, k,
\end{align*}
which implies that $y_0^*,\cdots,y_k^*$ are attainable regardless of the values of $\bar{v}_{T_{1}, t_{2}}$, $\bar{z}^{\ell}_{T_{1}, t_{2}}$ and $\bar x_{T_1, t_2}$ by adjusting $G_2$. In addition, by Proposition \ref{solvability}, there exists $\phi_h:\hR^{k+2}\to \hR^{k+2}$ defined in \eqref{demapphi} such that $\bar v_{T_0, t_1} = \phi_{v}(y)$, $\bar z_{T_0, t_1}^\ell = \phi_{z^\ell}(y)$ and $\bar x_{T_0, t_1} = \phi_{x}(y)$ with $\ell = 1, 2, \cdots, k$. Together with \eqref{avfmethod} and \eqref{sdesolu}, this leads to
\begin{align*}
&v_1 \, = \, e^{-\frac{\gamma h}{2}}\phi_{v}(y) + \int_{t_{0}}^{t_{1}} e^{-\frac{\gamma}{2}(t_{1} - t)} \sqrt{2 \gamma} \rd W_{0}(t),\\
&z_1^\ell \, = \, e^{-\alpha_\ell h}\phi_{z^\ell}(y) + \frac{\gamma \lambda_{\ell}}{2 \alpha_{\ell} - \gamma}(e^{-\frac{\gamma}{2} h} - e^{-\alpha_{\ell} h})\phi_{x}(y) + \int_{t_{0}}^{t_{1}} e^{-\alpha_{\ell}(t_{1} - t)} \sqrt{2 \alpha_{\ell}} \rd W_{\ell}(t), \quad \ell=1,2,\cdots,k, \\
&x_1=e^{-\frac{\gamma h}{2}}\phi_{x}(y).
\end{align*}
Consequently, $x_1$ is determined by $y$, while $v_1$ and $z_1^\ell$ can reach arbitrary values via $G_1$. Then, it follows from
\begin{align*}
&y^*_{k+1} = x_2 = e^{- \frac{\gamma h}{2}} \bar x_{T_1, t_2} =e^{- \frac{\gamma h}{2}}( x_1 + \frac{\gamma}{4}h(\bar{x}_{T_1, t_{2}} + x_{1}) + \frac{h}{2}(\bar{v}_{T_1, t_{2}} + v_{1}) ) ,\\
&\bar{v}_{T_{1}, t_{2}} = v_{1} - \frac{\gamma}{4}h(\bar{v}_{T_1, t_{2}} + v_{1}) - h \int_{0}^{1}\nabla U(x_{1} + \theta(\bar{x}_{T_1, t_{2}} - x_{1}))\rd \theta + \frac{h}{2} \sum\limits^{k}_{\ell = 1}\lambda_{\ell}(\bar{z}_{T_{1}, t_{2}}^{\ell} + z_{1}^{\ell}), \\
&\bar{z}^{\ell}_{T_{1}, t_{2}} = z^{\ell}_{1} - \frac{\lambda_{\ell}}{2}h(\bar{v}_{T_1, t_{2}} + v_{1}) - \frac{\gamma\lambda_{\ell}}{4}h(\bar{x}_{T_1, t_{2}} + x_{1}),
\end{align*}
that $y^*_{k+1}$ is attainable by adjusting $G_1$, which is not uniquely determined. Thus, $\Psi_{h, y}$ is a surjective, and for any $y^* \in \hR^{k+2}$, there exists $(g_1, g_2) \in \hR^{2(k+1)}$ such that $\Psi_{h, y}\left(g_1, g_2\right)=y^*$.

\textbf{Step 2.} It follows that
\begin{align*}
D_{g} \Psi_{h, y}(g_{1}, g_{2})=\Big[M \nabla \phi_{h}\Big(M \phi_{h}(y)+\binom{G_{1}}{0}\Big)\binom{I}{0},\binom{I}{0}\Big]=\Big[M \nabla \phi_{h}(Y_1)\binom{I}{0},\binom{I}{0}\Big],
\end{align*}
where $\nabla \phi_{h}(Y_1)$ is given by \eqref{nablaphih} and $I$ is the $(k+1) \times (k+1)$ identity matrix. Since the first $k+1$ entries of the last row of $M \nabla \phi_h\left(Y_1\right)$ are nonzero, it follows that the last row of $M \nabla \phi_h\left(Y_1\right)\binom{I}{0}$ contains nonzero entries. Consequently, $D_g \Psi_{h, y}$ has full rank $k+2$, due to Remark \ref{phi:bijection_smooth}.

By \cite[Lemma 6.3]{Benaime2015}, for any $y, y^* \in \mathbb{R}^{k+2}$, there exist open neighborhoods $J_y$ and $J_{y^*}$ of $y$ and $y^*$, respectively, and a constant $c_{y, y^*}>0$ such that
\begin{align*}
\mathcal{P}_h^2(\mathbf{1}_{J_{y^*}} \varphi)(y^{\prime})>c_{y, y^*} \int_{J_{y^*}} \varphi(y) \nu(\mathrm{d} y), \qquad \forall y^{\prime} \in J_{y}, \quad \forall \varphi \in \mathcal{C}_0(\hR^{k+2}, \hR), 
\end{align*}
where $\mathbf{1}_{J_{y^*}}$ is the indicator function of the set $J_{y^*}$. The proof is then completed by following the compactness argument presented in \cite[SM1.2]{LeimkuhlerSachs2022}.
\end{proof}

\begin{theorem} [Geometric ergodicity] 
\label{Geometric ergodicity}
Let Assumption \ref{ass.1} hold. Then, the numerical solution $\{Y_{n}\}_{n\in \mathbb{N}}$, defined by \eqref{numsolu}--\eqref{sdesolu}, admits a unique invariant measure $\tilde{\pi}$. Moreover, for any measurable function $g : \hR^{k+2} \to\hR $ satisfying $|g| \leq(H + C_{0})^{p}$ for some $p \ge 1$, there exist constants $\kappa := \kappa(p) \in (0, 1)$ and $C := C(p)>0$ such that 
\begin{align*}
\big| \hE \big[ g(Y_{n}) \big] - \tilde{\pi}(g) \big| 
\leq C \kappa^{n} \big( 1 + |H(Y_{0})|^{p} \big), \qquad \forall\, n \geq 1, 
\end{align*} 
where $H$ is defined in \eqref{newHamil}.
\end{theorem}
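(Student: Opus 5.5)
The plan is to apply the Harris-type theorem of \cite[Theorem 2.5]{MattinglyStuartHigham2002} to the Markov chain $\{Y_n\}$. It has two ingredients: a Lyapunov function $V$ with $\hE[V(Y_{n+1})\mid\mathcal F_{t_n}]\le \alpha V(Y_n)+\beta$, and a minorization condition for some iterate of the transition kernel on a sublevel set $\{V\le R\}$ with $R>2\beta/(1-\alpha)$. Together these give a unique invariant measure $\tilde\pi$ and exponential convergence $|\hE[g(Y_n)]-\tilde\pi(g)|\le C\kappa^n V(Y_0)$ for all $|g|\le V$.

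Since the statement concerns test functions with $|g|\le (H+C_0)^p$, I will take $V_p:=(H+C_0)^p$ for each fixed $p\ge1$, which is positive because $C_0\ge C_3+1$ and $H+C_0\ge 1$ (after possibly enlarging $C_0$ so that the cross term $\frac{\gamma}{2}vx$ is absorbed via \eqref{superu}).

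\textbf{Step 1 (Lyapunov condition for $V_p$).} Lemma \ref{Numerical:Lyapunov} treats $p=1$. For general $p$ I use the same splitting structure. The AVF substep preserves $H$ exactly, so $V_p(\bar Y_{T_n,t_{n+1}})=V_p(Y_n)$. For the linear stochastic substep I apply It\^o's formula to $(H+C_0)^p$ along $\tilde Y_{T_n,t}$. The drift term gives $p(H+C_0)^{p-1}\big(DH\tilde\mu+\tfrac12\mathrm{tr}(D^2H\tilde\sigma\tilde\sigma^\top)\big)$, and by the estimate in the proof of Proposition \ref{Numerical:expo} this is bounded by $p(H+C_0)^{p-1}\big(-\tfrac{\underline\alpha_\gamma}{2}H+C\big)$. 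The quadratic-variation term gives $\tfrac{p(p-1)}{2}(H+C_0)^{p-2}\|DH\tilde\sigma\|^2\le C(H+C_0)^{p-1}$, using $\|DH\tilde\sigma\|^2\le C(H+C_0)$. Young's inequality then yields a drift bound $-c_pV_p+C_p$. Taking conditional expectation (the stochastic integral is a true martingale thanks to the moment bounds of Corollary \ref{numerical:momentbound}) and applying Gr\"onwall on $[t_n,t_{n+1}]$ gives $\hE[V_p(Y_{n+1})\mid\mathcal F_{t_n}]\le e^{-c_ph}V_p(Y_n)+\beta_p$.

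\textbf{Step 2 (minorization on sublevel sets).} The set $\{V_p\le R\}$ is compact, because $H+C_0$ is coercive by \eqref{superu}, so it lies inside a ball $\{\|y\|\le y_{\max}\}$. Lemma \ref{Numerical:minorization} then gives $\inf_{\|y\|\le y_{\max}}\mathcal P_h^2\varphi(y)\ge \alpha\int\varphi\,\rd\nu$. One detail needs care: Lebesgue measure $\nu$ is not a probability measure. I therefore restrict it to a fixed bounded open set (for instance $J_{y^*}$ from the local argument), normalize it, and use monotone approximation to pass from $\varphi\in C_0$ to indicator functions. This produces the standard minorization $\mathcal P_h^2(y,\cdot)\ge \eta\,\nu_0(\cdot)$ for a probability measure $\nu_0$, uniformly over the sublevel set.

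\textbf{Step 3 (conclusion).} The minorization is for the two-step kernel, so I apply the Harris theorem to the chain $\{Y_{2n}\}$. Iterating Step 1 twice gives its Lyapunov bound with constants $\alpha^2$ and $(1+\alpha)\beta$. This yields a unique invariant measure for $\mathcal P_h^2$ and geometric convergence in the $V_p$-weighted norm. The bound then transfers to odd times by writing $Y_{2n+1}$ as one step of $\mathcal P_h$ applied to $Y_{2n}$ and using $\mathcal P_hV_p\le \alpha V_p+\beta$. A measure invariant for $\mathcal P_h$ is also invariant for $\mathcal P_h^2$, and existence for $\mathcal P_h$ comes from Lemma \ref{Numerical:Lyapunov}; hence uniqueness for $\mathcal P_h^2$ gives uniqueness of $\tilde\pi$. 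Finally, $V_p(Y_0)\le C(1+|H(Y_0)|^p)$ gives the stated form.

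The main obstacle is Step 2. The local minorization from Lemma \ref{Numerical:minorization} and \cite[Lemma 6.3]{Benaime2015} must be made uniform over the whole compact sublevel set with a single reference measure. My first approach is to fix one target $y^*$, cover the compact set by finitely many neighborhoods $J_y$ with constants $c_{y,y^*}$, and take the minimum constant with the common measure $\nu|_{J'}$, where $J'=\bigcap_y J_{y^*}(y)$. If these intersections are empty, I will instead shrink each target neighborhood to a fixed small ball around $y^*$.
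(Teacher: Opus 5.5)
Your proposal is correct and follows the same route as the paper, whose proof simply combines the Lyapunov condition of Lemma~\ref{Numerical:Lyapunov} and the two-step minorization of Lemma~\ref{Numerical:minorization} with \cite[Theorem 2.5]{MattinglyStuartHigham2002}. The extra details you supply are sound and make explicit what the paper's one-line argument leaves implicit: the Lyapunov bound for $(H+C_0)^p$ (the paper proves only $p=1$, although the stated class of test functions needs general $p$), the normalization and uniformization of the local Lebesgue minorization over the compact sublevel set (the finite intersection of target neighborhoods cannot be empty, since each contains $y^*$), and the transfer from $\mathcal{P}_h^2$ back to $\mathcal{P}_h$.
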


\begin{proof}
By combining Lemmas \ref{Numerical:Lyapunov} and \ref{Numerical:minorization} with \cite[Theorem 2.5]{MattinglyStuartHigham2002}, the proof is completed.
\end{proof}

\section{Numerical experiments}
\label{sec.experiment}

In this section, we conduct numerical experiments for the proposed splitting AVF method \eqref{numsolu}--\eqref{sdesolu} applied to the GLE \eqref{GLE} to validate the strong and weak convergence orders, as well as to investigate its long-time performance in computing ergodic limits. In all experiments, we consider the double-well potential $U(x) = \frac{x^4}{4} - \frac{x^2}{2}$ for $x \in \hR$, with parameters $k = 3$, $\lambda_\ell = 2$, $\alpha_\ell = 3$ $(\ell = 1,2,3)$, and $\gamma = 5$.

We first test the strong and weak convergence orders of the splitting AVF method \eqref{numsolu}--\eqref{sdesolu} for the GLE \eqref{GLE}, with terminal time $T = 1$ and initial value $Y(0) = (1,\cdots,1)^\top \in \hR^5$. The numerical solutions are computed with step sizes $h_i = 2^{-i}$, $i = 8, 9, 10, 11, 12$. Let $Y_h(T, \omega_j)$ denote the $j$-th sample solution at time $T$ obtained with step size $h$. The strong and weak errors are evaluated by
\begin{align*}
& \mbox{Err}_{\mbox{strong}}(h_i) = \left(\frac{1}{M} \sum_{j = 1}^{M} \big\| Y_{h_i}\left(T, \omega_j\right)-Y_{h_{i+1}}\left(T, \omega_j\right) \big\|^2 \right)^{1/2}, \\
& \mbox{Err}_{\mbox{weak}}(h_i) = \left| \frac{1}{M} \sum_{j = 1}^M \Big( g\big( v_{h_i}(T, \omega^j), x_{h_i}(T, \omega^j) \big) - g\big( v_{h_{i+1}}(T, \omega^j), x_{h_{i+1}}(T, \omega^j) \big) \Big) \right|. 
\end{align*}
Here, the test function is chosen as $g(v, x) = \sin\big((v^2 + x^2)^{1/2}\big)$, and $M = 5{, }000$ sample paths are used to approximate the expectations. Table \ref{Table:strong_weak_order} shows that the splitting AVF method \eqref{numsolu}--\eqref{sdesolu} achieves first-order mean-square convergence (left) and first-order weak convergence (right), respectively.

Next, the splitting AVF method \eqref{numsolu}--\eqref{sdesolu}, with initial value $Y(0) = (1,\cdots,1)^\top \in \hR^5$ and step size $h = 2^{-3}$, is applied to $2{,}000$ sample paths over the time interval $[0, 512]$. The distributions of the numerical solution at different times $t = 2$, $16$, $128$, and $512$ are plotted in Figures \ref{fig:ergodicity3D} and \ref{fig:ergodicity2D}, indicating that the distribution of the numerical solution converges to a stationary distribution as time increases.

Finally, we examine the long-time performance of the splitting AVF method \eqref{numsolu}--\eqref{sdesolu} in computing the ergodic limits defined by $\int_{\hR^{5}} g \, \rd \pi$, where $\pi$ is the Gibbs–Boltzmann measure given in \eqref{Gibbs--Boltzmann}. We consider three test functions $g(y) = \cos(\|y\|^2)$, $\exp(-\|y\|^2/2)$, and $\sin(\|y\|^2)$ for $y \in \hR^5$. Figure \ref{fig:emporalaverages} displays the temporal averages $\frac{1}{N} \sum_{n=1}^N \hE [g(Y_n)]$ of the numerical solution generated by the splitting AVF method \eqref{numsolu}--\eqref{sdesolu} with step size $h = 2^{-3}$, starting from four different initial values $Y^{(1)} = (-10,2,3,4,1)^\top$, $Y^{(2)} = (2,1,1,1,-10)^\top$, $Y^{(3)} = (1,-1,-1,-1,3)^\top$, and $Y^{(4)} = (4,2,3,4,2)^\top$. Here, the expectations are approximated using $2{, }000$ Monte Carlo paths. All results indicate that the temporal averages of the numerical solutions effectively approximate the reference line, corresponding to the ergodic limit.

\begin{table}[htbp] 
\centering
\setlength{\tabcolsep}{5mm}
\caption{The strong error and strong order (left) as well as the weak error and weak order (right) for the splitting AVF method \eqref{numsolu}--\eqref{sdesolu}.}
\label{Table:strong_weak_order}
\begin{tabularx}{0.9\textwidth}{ccccc}
\toprule 
\ \ 
\makecell[c]{Stepsize}
& \makecell[c]{Strong error} &\makecell[c]{Strong order} & \makecell[c]{Weak error} & \makecell[c]{Weak order} \\
\midrule
\ \ $2^{-8}$ & $9.3149e$-$01$ & $0.99$ & $2.4671e$-$04$ & $0.97$ \\
\ \ $2^{-9}$ & $4.6842e$-$01$ & $1.00$ & $1.2572e$-$04$ & $1.03$ \\
\ \ $2^{-10}$ & $2.3475e$-$01$ & $0.96$ & $6.1386e$-$05$ & $1.32$ \\
\ \ $2^{-11}$ & $1.2057e$-$01$ & $*$ & $2.4599e$-$05$ & $*$ \\
\bottomrule
\end{tabularx}
\end{table}

\begin{adjustbox}{scale = 0.7}
\indent 
\begin{minipage}{1.2\linewidth}
\centering
\begin{minipage}{0.48\linewidth} 
\vspace{3pt}
\includegraphics[width = \textwidth]{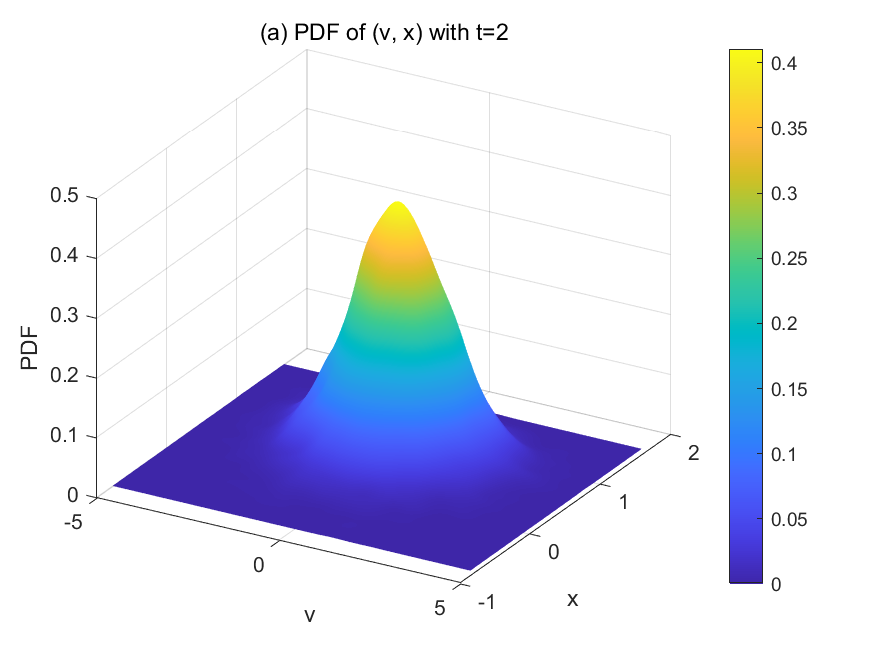}\\
\vspace{3pt}
\includegraphics[width = \textwidth]{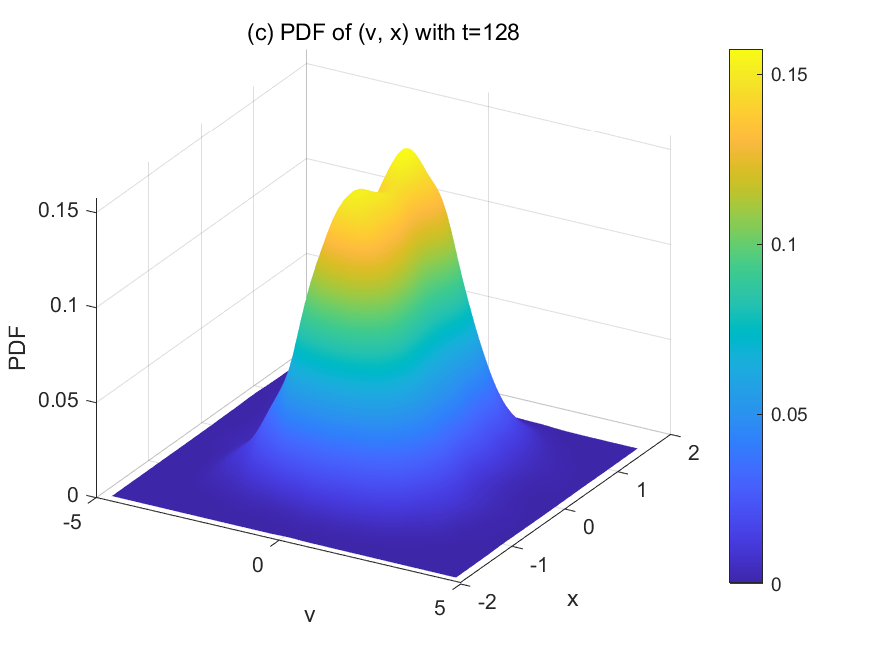}
\end{minipage}
\hfill
\begin{minipage}{0.48\linewidth}
\vspace{3pt}
\includegraphics[width = \textwidth]{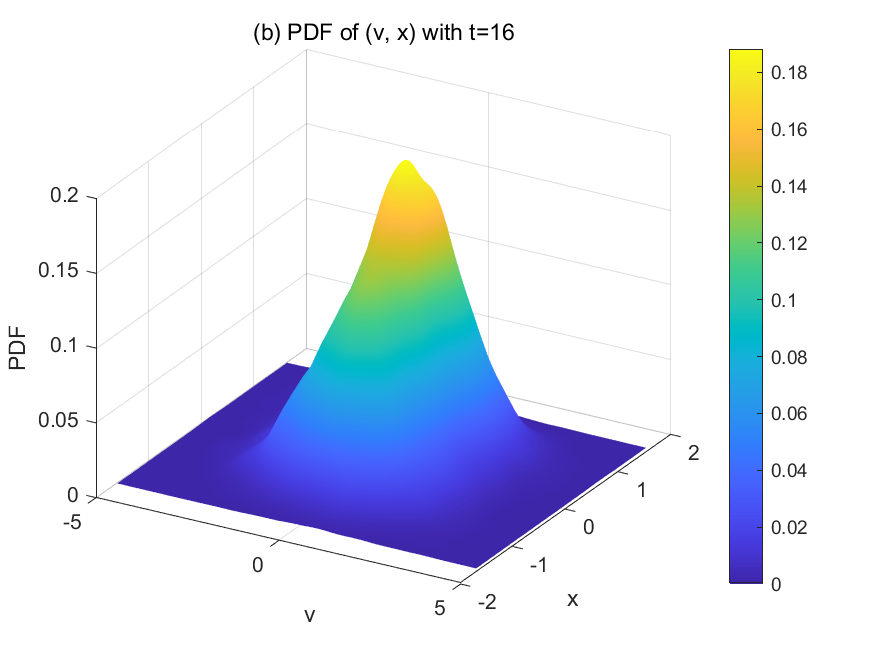}\\
\vspace{3pt}
\includegraphics[width = \textwidth]{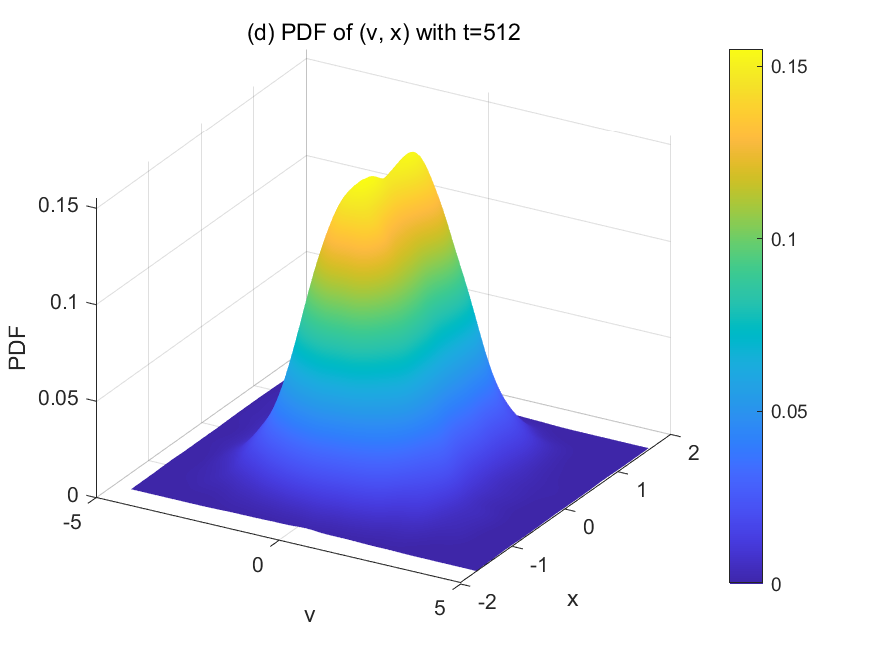}
\end{minipage}
\vskip -1.5em
\captionof{figure}{3D surface plots of the distribution at different times $t = 2$, $16$, $128$, $512$.}
\label{fig:ergodicity3D}
\end{minipage}
\end{adjustbox}

\begin{adjustbox}{scale = 0.7}
\indent 
\begin{minipage}{1.2\linewidth}
\centering
\begin{minipage}{0.48\linewidth} 
\vspace{3pt}
\includegraphics[width = \textwidth]{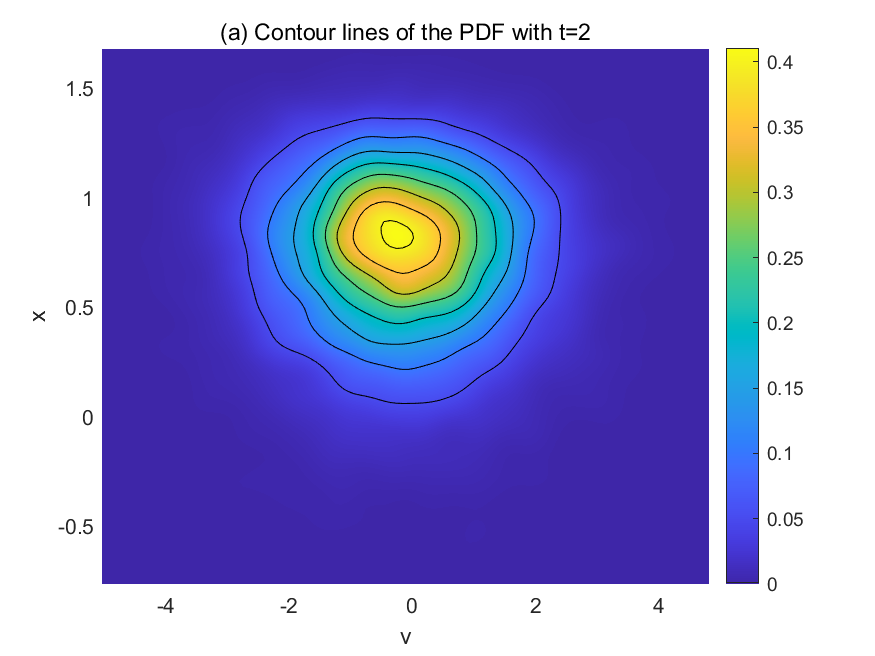}\\
\vspace{3pt}
\includegraphics[width = \textwidth]{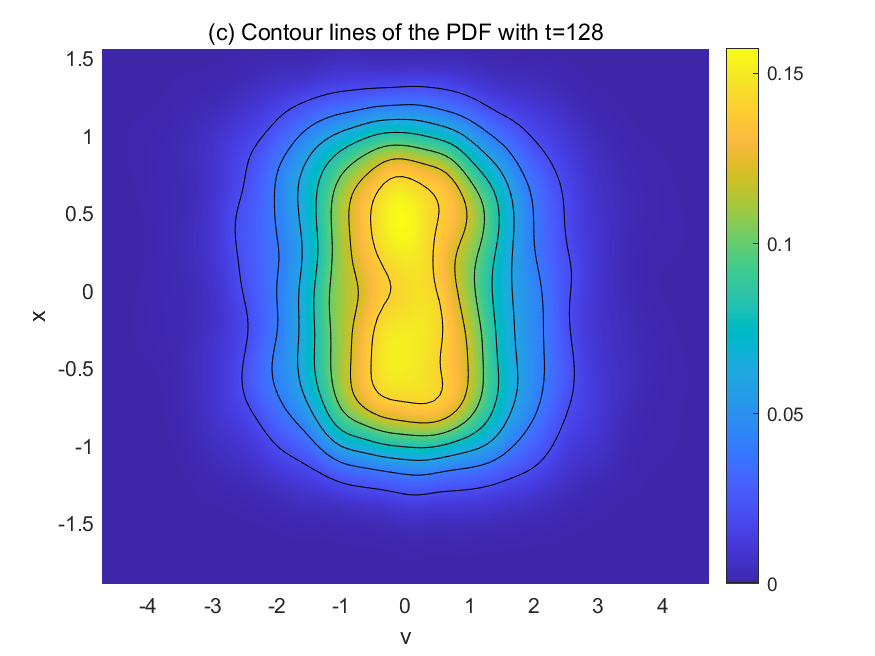}
\end{minipage}
\hfill
\begin{minipage}{0.48\linewidth}
\vspace{3pt}
\includegraphics[width = \textwidth]{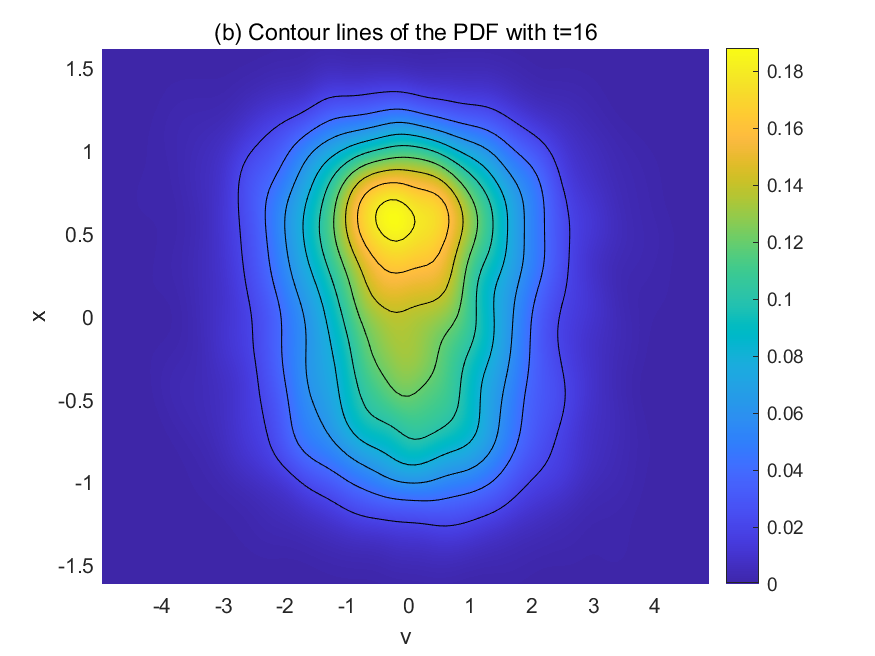}\\
\vspace{3pt}
\includegraphics[width = \textwidth]{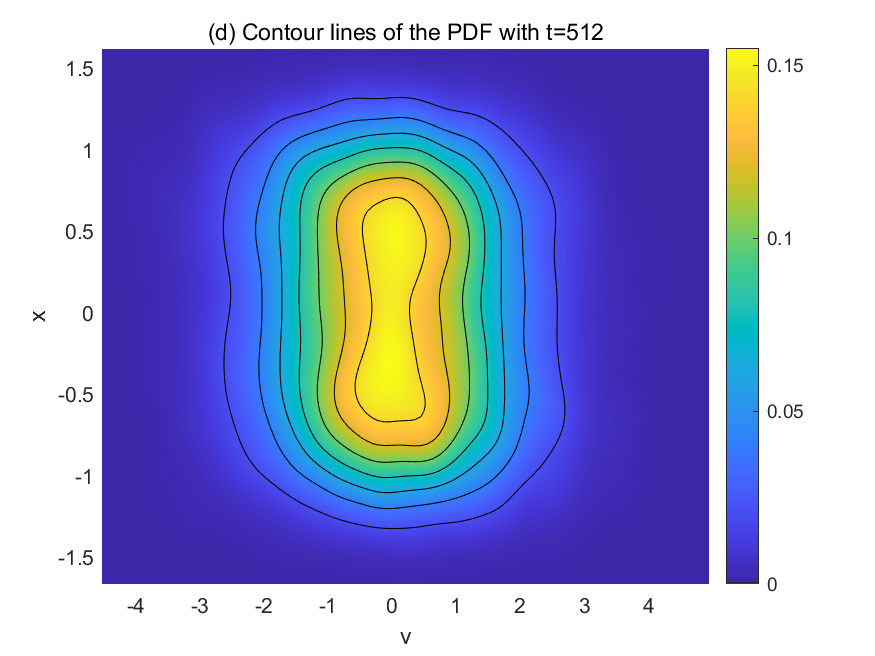}
\end{minipage}
\vskip -1.0em
\captionof{figure}{2D contour plots of the distribution at different times $t = 2$, $16$, $128$, $512$.}
\label{fig:ergodicity2D}
\end{minipage}
\end{adjustbox}

\begin{adjustbox}{scale=0.9}
\indent 
\begin{minipage}{1.0\linewidth}
\centering
\begin{minipage}{0.85\linewidth} 
\centering
(a) $g(y) = \cos(\|y\|^2)$
\includegraphics[width=\textwidth]{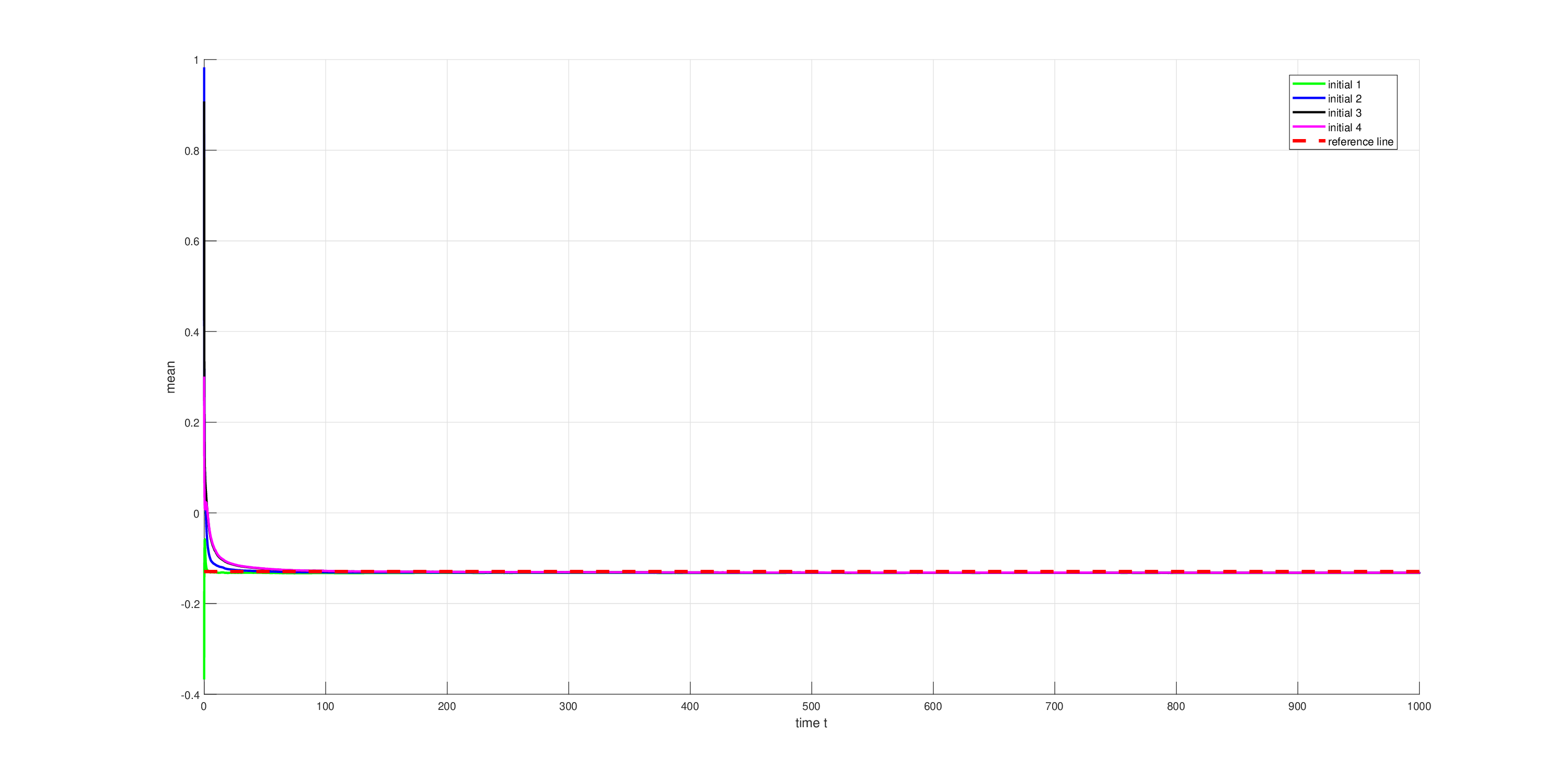}
\end{minipage}
\hfill
\begin{minipage}{0.85\linewidth}
\centering
(b) $g(y) = \exp(-\frac{\|y\|^2}{2})$
\includegraphics[width=\textwidth]{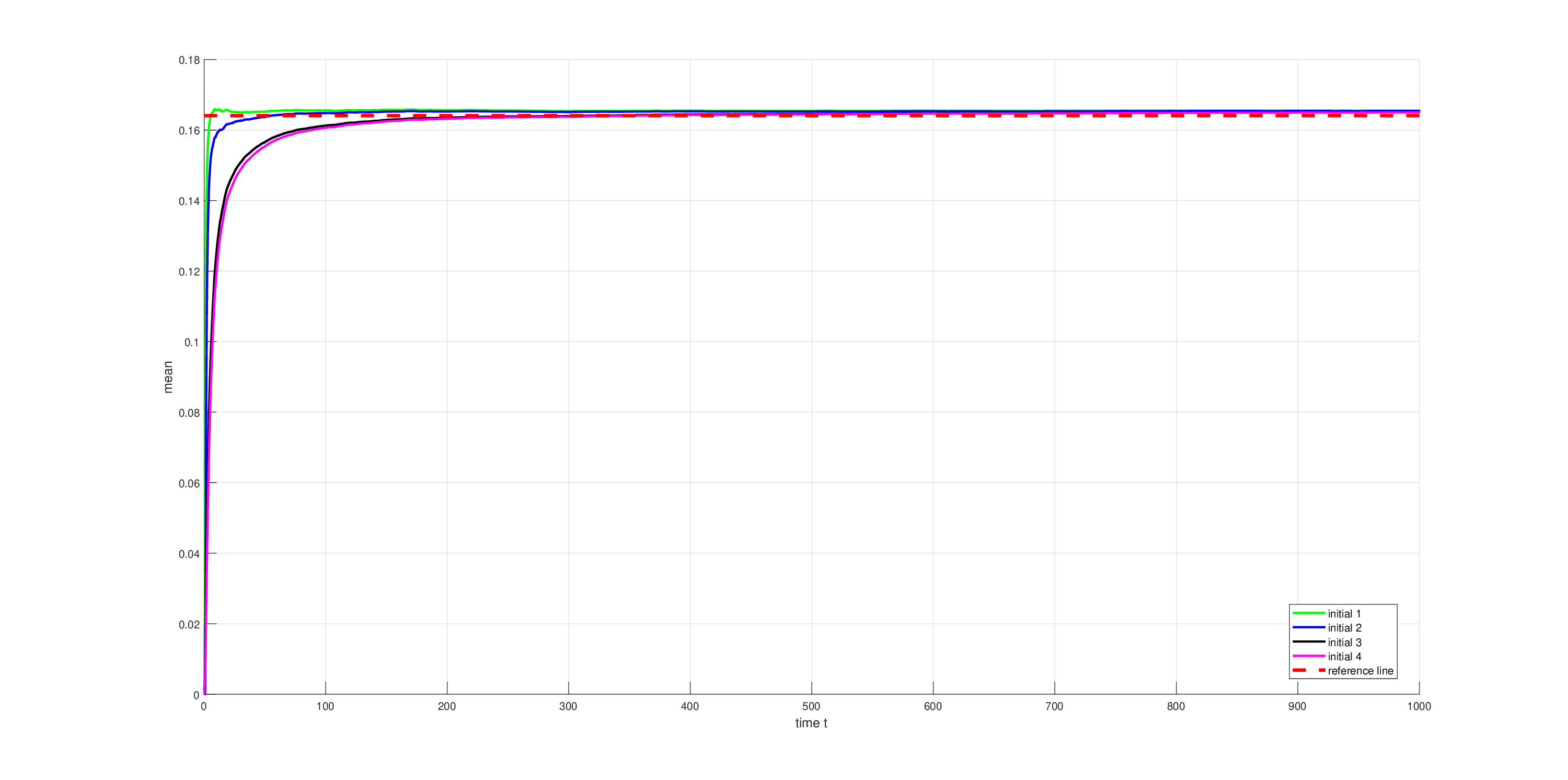}
\end{minipage}
\vspace{1em} 
\begin{minipage}{0.85\linewidth}
\centering
(c) $g(y) = \sin(\|y\|^2)$
\includegraphics[width=\textwidth]{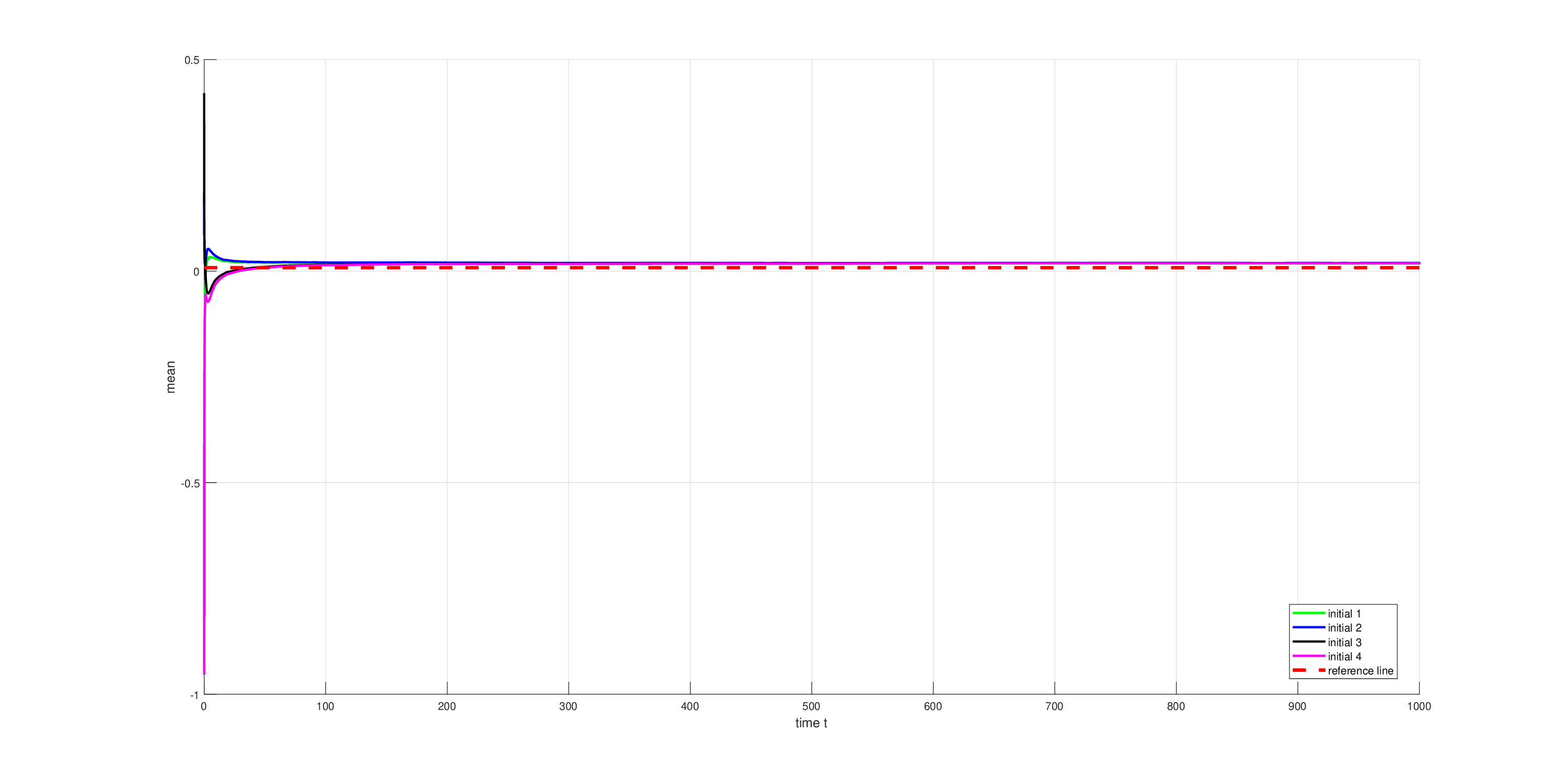}
\end{minipage}
\vspace{-0.5em}
\captionof{figure}{Temporal averages $\frac{1}{N} \sum_{n=1}^N \hE [g(Y_n)]$ for four different initial values.}
\label{fig:emporalaverages}
\end{minipage}
\end{adjustbox}



\end{document}